\documentclass[12pt]{amsart}
\usepackage{pifont}
\usepackage{txfonts}
\usepackage{}
\usepackage{amsfonts}
\usepackage{graphicx}
\usepackage{epstopdf}
\usepackage{amsmath}
\usepackage{amsthm}
\usepackage{latexsym,bm}
\usepackage{amssymb}
\usepackage{esint}
\usepackage{enumitem}
\usepackage{indentfirst}
\usepackage{amscd}
\newtheorem{thm}{Theorem}[section]
\newtheorem{cor}[thm]{Corollary}
\newtheorem{lem}[thm]{Lemma}

\numberwithin{equation}{section}
\numberwithin{Remark}{section}

\begin{document}

\title{A Loewner-Nirenberg phenomena for Ricci flow on compact manifolds with boundary.II}

\author{Gang Li$^\dag$}

\begin{abstract}
This is a continuation of the research in \cite{Li4}. Let $(\overline{M},g_{-1})$ be a closed geodesic $r_0$-ball in the hyperbolic space $(\mathbb{H}^n,g_{-1})$. Let $m\neq1$ be a positive constant. In this paper, we show that for $n\geq3$, starting from the metric $m g_{-1}$ on $\overline{M}$, with certain prescribed non-decreasing rotationally symmetric mean curvature and the fixed conformal class $[g_{\mathbb{S}^{n-1}}]$ on the boundary $\partial M$, the solution $g(t)$ to the normalized Ricci flow $(\ref{equn_NRF1})$ which is continuous up to the boundary, exists for all $t>0$, and converges locally uniformly in the interior $M$ of $\overline{M}$ to a complete hyperbolic metric as $t\to\infty$(see Theorem \ref{thm_main} for details). Under some additional conditions, we show the same conclusion holds for $n=2$.
\end{abstract}

\renewcommand{\subjclassname}{\textup{2000} Mathematics Subject Classification}
 \subjclass[2010]{Primary 53C44; Secondary 35K55, 35R01, 53C21}


\thanks{$^\dag$ Research supported by the National Natural Science Foundation of China No. 11701326.}

\address{Gang Li, School of Mathematics, Shandong University, Jinan, Shandong Province, China}
\email{runxing3@gmail.com}

\maketitle


\section{Introduction}


Let $\overline{M}=\overline{B_{r_0}}(0)$ be the closed $r_0$-ball on the Euclidean space $\mathbb{R}^n$, for some $r_0>0$ and $n\geq2$, with $M=B_{r_0}(0)$ its interior and $\partial M$ its boundary. We equip $\overline{M}$ with a rotationally symmetric metric with sectional curvature $-\frac{1}{m}$:
\begin{align}\label{equn_mhyperbolicmetric}
m\,g_{-1}=m\,(dr^2+\sinh^2(r)\,g_{\mathbb{S}^{n-1}})
\end{align}
for $0\leq r\leq r_0$, where $g_{-1}$ is the hyperbolic metric, $m\neq1$ is a positive constant and $g_{\mathbb{S}^{n-1}}$ is the round metric on $\mathbb{S}^{n-1}$, and hence, $(\overline{M},mg_{-1})$ is the closed geodesic $\sqrt{m}r_0$-ball in $(\mathbb{H}^n,mg_{-1})$. In this paper, we consider the following initial-boundary value problem of the normalized Ricci flow
\begin{align}
&\label{equn_NRF1}\frac{\partial}{\partial t}g=-2(\text{Ric}_{g}+(n-1)g),\,\,\,\text{in}\,\,\overline{M}\times[0,\infty),\\
&\label{equn_Nbdc1}H(g)=\eta(t)\,\,\,\text{on}\,\,\partial M\times[0,\infty),\\
&\label{equn_Nbdc2}[g^T]=[g_{S^{n-1}}]\,\,\,\text{on}\,\,\partial M\times[0,\infty),\\
&\label{equn_Ninc2m}g\big|_{t=0}=mg_{-1}\,\,\,\text{in}\,\,\overline{M},
\end{align}
where $H(g)$ is the mean curvature of $\partial M$ with respect to $g=g(t)$, also $[g^T]$ is the conformal class of the tangential part $g^T$ of the metric $g=g(t)$ on $\partial M$, and $\eta=\eta(t)\in C^k([0,\infty))$ with $k\geq 2$. In \cite{Li4}, the author showed that for $m=1$, when $\eta(t)$ is increasing with its derivative $\eta'(t)$ not identically zero and that
\begin{align*}
&\eta(0)=H_{\,g_{-1}}\big|_{\partial M}=(n-1)\frac{\cosh(r_0)}{\sinh(r_0)},\,\,\eta^{(j)}(0)=0,
\end{align*}
for $j=1,..,k$, there exists a unique solution $g=g(t)$ to $(\ref{equn_NRF1})-(\ref{equn_Ninc2m})$ for all $t>0$, and $g(t)$ converges locally uniformly to a complete hyperbolic metric in $M$ as $t\to\infty$.

We recall that under the initial condition $\bar{g}\big|_{t=0}=mg_{-1}$ on $\mathbb{H}^n$, we have a global solution to the normalized Ricci flow $(\ref{equn_NRF1})$ on $\mathbb{H}^n$:
\begin{align}\label{equn_modelg}
\bar{g}(t)=\xi(t)g_{-1}
\end{align}
for $t\geq0$, with
\begin{align}
\xi(t)=1+e^{-2(n-1)t}(m-1).
\end{align}
Here $\xi=\xi(t)$ satisfies the equation
\begin{align}
\xi'(t)=2(n-1)(1-\xi(t)).
\end{align}
Restricted on $\overline{M}$, the mean curvature on the boundary with respect to $\bar{g}(t)$ satisfies
\begin{align}\label{equn_Hbd0}
H_{\bar{g}(t)}\big|_{\partial M}=(n-1)\xi(t)^{-\frac{1}{2}}\frac{\cosh(r_0)}{\sinh(r_0)}
\end{align}
for $t\geq0$. Now we state the main theorem of the paper:
\begin{thm}\label{thm_main}
Let $n\geq3$, $m>0$ and $k\geq2$. Assume the function $\rho=\rho(t)\in C^k([0,\infty))$ satisfies that
\begin{align}\label{equn_comptmk}
\rho(0)=\rho^{(j)}(0)=0
\end{align}
for $1\leq j\leq k$, $\rho'(t)\geq0$ for $t\geq0$, and $\rho'(t)>0$ for $t\in(0,t_0)$ with some $t_0>0$. Define
\begin{align}\label{inequn_etabc2}
\eta(t)=H_{\bar{g}(t)}\big|_{\partial M}+\rho(t),
\end{align}
for $t\geq0$, with $\bar{g}$ in $(\ref{equn_modelg})$ and $H_{\bar{g}(t)}\big|_{\partial M}$ in $(\ref{equn_Hbd0})$. If $m>1$, we further assume that
\begin{align}\label{inequn_rhodt0}
\rho'(t)\geq(n-1+\sigma)(1-\xi(t)^{-1})\rho(t)
\end{align}
for $t\in(0,\epsilon)$, where $\sigma$ and $\epsilon$ are some small positive constants, and moreover,
\begin{align}\label{inequn_rhodt2}
\rho'(t)\geq(n-1)(1-\xi(t)^{-1})\rho(t)
\end{align}
for $t\geq0$ when $m>1$. Then the solution $g(t)$ to $(\ref{equn_NRF1})-(\ref{equn_Ninc2m})$ exists for all $t>0$, and  converges locally uniformly in $M$ to the complete hyperbolic metric $g_{\infty}$ on $M$, as $t\to\infty$.
\end{thm}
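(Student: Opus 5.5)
Since the data are rotationally symmetric and the flow with these boundary conditions is unique, the solution $g(t)$ stays rotationally symmetric and conformal to $g_{-1}$. I will write $g(t)=e^{2w(r,t)}g_{-1}$, or equivalently $g(t)=u^{\frac{4}{n-2}}g_{-1}$ for $n\geq3$. The normalized Ricci flow then reduces to a scalar parabolic equation in the spirit of the Yamabe flow. For a conformally flat rotationally symmetric metric, the Ricci tensor has a radial and a tangential part. I will therefore use the reduction of \cite{Li4}: pass to the DeTurck-type gauge, or work with the warped product form $g=\phi(r,t)^2dr^2+\psi(r,t)^2 g_{\mathbb{S}^{n-1}}$ in the geodesic coordinate $s$, where the flow becomes a parabolic system for the warping function. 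The conformal boundary condition (\ref{equn_Nbdc2}) is automatic in rotational symmetry. The mean curvature condition (\ref{equn_Nbdc1}) becomes a nonlinear Neumann-type condition. The compatibility conditions (\ref{equn_comptmk}) give $C^{k}$ compatibility at the corner $\partial M\times\{0\}$, so short-time existence and uniqueness follow as in \cite{Li4}.

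The core of the argument is a comparison principle with the model solution $\bar g(t)=\xi(t)g_{-1}$. First I will show that $\rho\geq0$ forces $g(t)\geq \bar g(t)$, via a maximum principle for the equation satisfied by the difference of conformal factors $w-\bar w$. At an interior minimum the strong maximum principle applies. At the boundary, a minimum of $w-\bar w$ together with $H(g)\geq H(\bar g)$ gives the wrong sign for the normal derivative, which is a Hopf-type contradiction. Second, I will show monotonicity in time. Differentiating the flow in $t$ gives a linear parabolic equation for $v=\partial_t w-\partial_t\bar w$, or more precisely for the quantity $\partial_t w + (n-1)(1-e^{-2w})$-type expression that vanishes on the model. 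Its boundary condition involves $\rho'$. When $m\le 1$, $\rho'\geq0$ directly yields the right sign of $v$. When $m>1$, the boundary term picks up a zeroth-order contribution $(n-1)(1-\xi^{-1})\rho$, and condition (\ref{inequn_rhodt2}) is exactly what keeps the sign. Near $t=0$, all quantities vanish to high order, so I will use (\ref{inequn_rhodt0}) with its margin $\sigma$ to get strict positivity for small $t$. With the strict sign of $\rho'$ on $(0,t_0)$, this gives that $g(t)$ is nondecreasing, in the sense that $e^{2w}/\xi$ increases.

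For long-time existence I need a priori bounds on every compact time interval. The lower bound is $g\geq\bar g$. Upper bounds come from comparison with a supersolution. On $[0,T]$, the boundary mean curvature is bounded, and I will compare with a hyperbolic-type metric on a slightly larger ball, or with the rescaled model $\lambda(t)\bar g$, to get $C^0$ bounds for $w$. Gradient bounds at the boundary come from the Neumann condition. Then standard parabolic Schauder estimates for the scalar equation and its boundary condition give higher regularity, so the solution extends past any finite $T$.

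For convergence, monotonicity gives a pointwise limit in $M$. I expect the interior upper bound uniform in $t$ to be the \textbf{main obstacle}. The plan is to compare with the complete hyperbolic metric $g_\infty$ on $M$, i.e.\ the Poincar\'e metric of the ball $B_{r_0}$, which is infinite on $\partial M$. Shrinking it slightly to a ball $B_{r_0-\delta}$, together with a time-dependent correction of the form $\xi$-like factors, gives a supersolution that blows up on $\partial B_{r_0-\delta}$. This yields $g(t)\leq g_\infty$ locally uniformly, and this is exactly the Loewner--Nirenberg barrier. Local uniform bounds plus interior parabolic estimates give smooth local convergence to a limit $\tilde g$. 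Because $\partial_t w\to0$, the limit $\tilde g$ satisfies $\mathrm{Ric}=-(n-1)\tilde g$ and is rotationally symmetric and conformally flat, hence hyperbolic.

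It remains to show completeness, i.e.\ that $\tilde g=g_\infty$. For this I must show $w(\cdot,t)\to\infty$ near $\partial M$. Since $\eta(t)\geq\eta(t_0)>\lim H_{\bar g}$ after the strict increase on $(0,t_0)$, the metric cannot stay bounded at the boundary. If $w$ stayed bounded on $\partial M$, the limit would be a hyperbolic metric on the closed ball whose boundary mean curvature is too large. That forces a geodesic ball with mean curvature at least $n-1+c$, which is compatible only with a bounded radius; this contradicts the monotone growth, and I expect this step to need a careful argument. By uniqueness of rotationally symmetric complete hyperbolic metrics on $B_{r_0}$, the limit is $g_\infty$.
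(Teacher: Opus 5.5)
Your argument depends on a reduction that is false for $n\geq3$. The solution does not stay of the form $g(t)=e^{2w(r,t)}g_{-1}$, and the normalized Ricci flow does not reduce to a scalar Yamabe-type equation. Write $g=a^2dr^2+b^2g_{\mathbb{S}^{n-1}}$. Then $\partial_t\ln(b/a)=P-Q=F_1-F_2$, and in fact $F_1<F_2$ for $r\in(0,r_0]$, $t>0$. So $b/a$ does not keep the value $\sinh r$ that conformality to $g_{-1}$ would force; the paper states explicitly that the flow is not conformal for $n\geq3$. Rotational symmetry gives conformal flatness only after a time-dependent radial diffeomorphism, and in that gauge the equation for $w$ picks up a nonlocal Lie-derivative term. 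In the warped-product form you mention, you get a genuinely coupled system. Ricci flow has no comparison principle for metrics, so the steps you build on a scalar equation have nothing to stand on. These are:
\begin{itemize}
\item the maximum principle for $w-\bar w$ giving $g\geq\bar g$;
\item the linearized equation for $\partial_t w$ giving monotonicity;
\item the Loewner--Nirenberg supersolution giving uniform interior upper bounds;
\item scalar Schauder estimates giving long-time existence.
\end{itemize}
That scalar route is exactly what the paper can use only for $n=2$ (Section 6).

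The missing idea is to control the two curvature functions $F_1=(n-1)(K+\xi^{-1})$ and $F_2=K+(n-2)L+(n-1)\xi^{-1}$. One proves the pinching $(n-1)F_2\leq F_1\leq F_2\leq0$, i.e.\ $K\leq L\leq-\xi(t)^{-1}$. The tools are the coupled equations $(\ref{equn_envP})$--$(\ref{equn_envQ})$, the identity $\partial_s(F_1-(n-1)F_2)=-2(F_1-F_2)H_g$, and the boundary relations $(\ref{equn_Pn-1Qb})$ and $(\ref{equn_DtHb})$. Near $t=0$, fixing the sign of $F_1-F_2$ on $\partial M$ takes a delicate argument through $\int_0^tF_2(r_0,\tau)\,d\tau$, and that is where the margin $\sigma$ in $(\ref{inequn_rhodt0})$ actually enters.

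Once the pinching holds, several of your targets follow by different means:
\begin{itemize}
\item Monotonicity of $(m\xi^{-1})^{1/2}a$ and $(m\xi^{-1})^{1/2}b$, which is your $g\geq\bar g$, is immediate from $(\ref{equn_aintt0})$--$(\ref{equn_bintt0})$.
\item Long-time existence comes from the resulting curvature and second fundamental form bounds together with Gianniotis's boundary estimates.
\item The uniform interior bounds come from geometry, not a barrier. $F_1<0$ gives $b\geq\xi^{1/2}\sinh(\xi^{-1/2}s)$, and $F_1\leq F_2$ gives $a/a(r,0)\geq b/b(r,0)$. Integrating the resulting inequality for $s$ in $r$ up to $r_0$ bounds $s(r,t)$ for $r<r_0$, and then $F_1>(n-1)F_2$ bounds $a$.
\end{itemize}

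Your completeness step also has a hole. Identifying the limit's boundary mean curvature with $\lim\eta$ requires convergence up to $\partial M$, which nothing in your argument provides. The paper instead proves $\mathrm{Vol}_{g(t)}(M)\to\infty$, which suffices by the monotonicity of $a$ and $b$. It does this by differentiating $\int_M R^\circ_g\,dV_g$. The boundary term $(n-2)\big(b^{-2}-\eta^2/(n-1)^2+1\big)\eta$ is negative and bounded away from zero for large $t$ because $\rho>0$. This gives a first-order differential inequality when $m<1$ and a second-order one for the volume when $m>1$.
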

We remark that when $m>1$, since the conditions $(\ref{inequn_rhodt0})$ and $(\ref{inequn_rhodt2})$ are linear, we can choose the function $\rho(t)$ in Theorem \ref{thm_main} so that $\lim_{t\to\infty}\rho(t)$ could be sufficiently close to $0$. In \cite{Li1} and \cite{Li3}, we consider the convergence, of the solutions to the initial-boundary value problem of certain conformal flows, to the solution to the blow-up boundary value problem of the Yamabe equation (i.e. the Loewner-Nirenberg problem, see \cite{LN,AM,AM1,ACF}). Theorem \ref{thm_main} can be considered as a generalization of this approach to the blow-up boundary value problem of the Einstein equations, which constitutes the first try of the flow approach starting from metrics continuous up to the boundary to the fill-in problem of conformally compact Einstein (CCE) metrics.

Recall that a family of metrics $h(t)$ satisfies the Ricci flow equation
\begin{align*}
h_t=-2\text{Ric}_{h(t)}
\end{align*}
for $t\geq0$ if and only if the family of metrics $g(t)$ defined by
\begin{align}\label{equn_vartransf}
g(t)=e^{-2(n-1)t}\,h(\frac{1}{2(n-1)}(e^{2(n-1)t}-1))
\end{align}
satisfies $(\ref{equn_NRF1})$ for $t\geq0$. Hence, by Theorem 1.2 and Theorem 1.3 in \cite{Gi}, there exists a unique solution to $(\ref{equn_NRF1})-(\ref{equn_Ninc2m})$  of $C^{2k+\alpha,k+\frac{\alpha}{2}}$ on $\overline{M}\times[0,T)$ for any $\alpha\in(0,1)$ and some $T>0$ under the compatibility conditions $(\ref{equn_comptmk})$. Since $g\big|_{t=0}$ and the boundary data are rotationally symmetric, by uniqueness, the solution $g(t)$ has the form
\begin{align}\label{equn_metricpolar1}
g(t)=a(r,t)^2dr^2+b(r,t)^2g_{\mathbb{S}^{n-1}},
\end{align}
for $(r,t)\in [0,r_0]\times[0,T)$, and hence the boundary condition $(\ref{equn_Nbdc1})$ is equivalent to
\begin{align}
\text{II}_{g}=\frac{\eta}{n-1}g^T,
\end{align}
with $\text{II}_g$ the second fundamental form of the boundary, as posed in \cite{Sh,Co1,Pu}. In comparison to the case $m=1$ in \cite{Li4}, here, instead of proving that the sectional curvature of $g(t)$ is bounded from above by $-1$, we show that the sectional curvature of $g(t)$ is bounded from above by $-\xi(t)^{-1}$ for $t\geq0$. After that, by a global argument, we show that the solution $g(t)$ exists for all $t>0$, and the solution converges locally uniformly to a rotationally symmetric locally hyperbolic metric $g_{\infty}$ in $M$. Finally, we show that $g_{\infty}$ is complete in $M$, by proving that $\text{Vol}_{g(t)}(\overline{M})\to\infty$ as $t\to\infty$, which is more complicated than the argument in \cite{Li4}.

For dimension two, due to the vanishing of several good terms in the a priori estimates (see Section \ref{section_6}), we have to employ the tools developed in \cite{Li3} with more restrictive conditions, and we have the following theorem:

\begin{thm}\label{thm_convern2}
Let $m>0$ and $\overline{M}=\overline{B_{r_0}}(0)$ be a closed geodesic ball of radius $r_0>0$ on the hyperbolic space $(\mathbb{H}^2,g_{-1})$. Assume the boundary data $\eta$ in $(\ref{equn_Nbdc1d2})$ is defined by
\begin{align*}
\eta(t)=k_{\bar{g}}\big|_{\partial M}+\rho(t)
\end{align*}
for $t\geq0$, with $\bar{g}$ in $(\ref{equn_modelg})$ and
\begin{align}\label{equn_Hbd02D}
k_{\bar{g}(t)}\big|_{\partial M}=\xi(t)^{-\frac{1}{2}}\frac{\cosh(r_0)}{\sinh(r_0)}
\end{align}
and $\rho=\rho(t)\in C^k([0,\infty))$ satisfies the compatibility condition $(\ref{equn_comptmk})$ with $k\geq2$, $\rho'(t)\geq 0$ for $t\geq0$ and $\rho'(t)>0$ for $t\in(0,t_1)$ with some $t_1>0$. Moreover, if $m>1$, we assume that 
$(\ref{inequn_rhodt2})$ holds for $t\geq0$, with $n=2$. Moreover, suppose that
 \begin{align}\label{inequn_etaupperbd}
\eta\leq y(t)^{\frac{1}{3}}-2,
\end{align}
 on $\partial M\times[0,\infty)$, where $y(t)\in C^3([0,\infty))$ is some positive function satisfying
\begin{align}\label{inequn_ODEgrowth}
y'\geq 3y+1
\end{align}
for $t\in[0,\infty)$. Assume that when $m\in(0,1)$ there exists a constant $T>0$ and a small constant $\epsilon>0$ such that
\begin{align}\label{inequn_rhoinfty}
\rho'(t)\geq \epsilon (1+t)^{-1}(\ln(1+t))^{-1}
 \end{align}
 for $t\geq T$; while when $m>1$ there exists a constant $T>0$ and a small constant $\epsilon>0$ such that
\begin{align}\label{inequn_rhoinfty2}
\rho'(t)-(1-\xi(t)^{-1})\rho(t)\geq \epsilon (1+t)^{-1}(\ln(1+t))^{-1}
 \end{align}
 for $t\geq T$. Then the solution $g(t)$ to $(\ref{equn_NRF1d2})-(\ref{equn_Ninc2d2})$ converges locally uniformly to a complete hyperbolic metric in $M$ as $t\to\infty$.
\end{thm}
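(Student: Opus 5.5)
In dimension two the normalized Ricci flow is conformal, so I will work with a scalar equation. Write $g(t)=e^{2w}g_{-1}$ on $\overline{M}$, which is possible since $[g^T]$ is fixed and $g$ is rotationally symmetric. The flow becomes
\begin{align*}
w_t=e^{-2w}\Delta_{g_{-1}}w-1+e^{-2w},
\end{align*}
because $K_g=e^{-2w}(-1-\Delta_{g_{-1}}w)$ and $g_t=-2(K_g+1)g$. The boundary condition is the geodesic curvature condition $e^{-w}\big(\partial_\nu w+\frac{\cosh r_0}{\sinh r_0}\big)=\eta(t)$, and the initial datum is $w(0)=\frac12\ln m$. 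The model solution is $\bar w(t)=\frac12\ln\xi(t)$. The plan is to treat this as the parabolic Loewner--Nirenberg problem, using the comparison and blow-up tools of \cite{Li3} rather than the curvature-pinching arguments used for $n\ge3$.

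\textbf{Step 1: monotonicity and a lower barrier.} Differentiate the equation in $t$ and apply the maximum principle to $v=w_t-\bar w_t$, or directly to $w-\bar w$. Since $\rho'\ge0$, and $(\ref{inequn_rhodt2})$ holds when $m>1$, the boundary condition forces $w\ge\bar w$. It also gives a sign for $K_g+\xi^{-1}$, namely $K_g\le-\xi(t)^{-1}$, which is the 2D analogue of the curvature bound used for $n\ge3$. Consequently $w_t+\frac{\xi'}{2\xi}\ge0$, so $w-\frac12\ln\xi$ is nondecreasing in $t$. Long-time existence then follows from the results of \cite{Gi} once I have $C^0$ bounds on finite time intervals. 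For these I will use the upper bound $(\ref{inequn_etaupperbd})$ together with a supersolution of the form $\frac13\ln y(t)$ plus a spatial correction; the ODE condition $y'\ge3y+1$ is exactly what makes this a supersolution, in the manner of \cite{Li3}.

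\textbf{Step 2: interior upper bound and convergence.} To get a bound independent of $t$ on compact subsets, I will compare $w$ with the Loewner--Nirenberg solution $w_\infty$ on $B_{r_0}$, the complete hyperbolic metric $e^{2w_\infty}g_{-1}$. On each smaller ball $B_{r_1}$ the infinite-boundary-value solution is a static supersolution. This yields $w\le w_{\infty,r_1}$ on $B_{r_1}$, hence a locally uniform upper bound. Combined with the monotonicity of Step 1 (and $\xi\to1$), $w(\cdot,t)$ converges locally uniformly to some $w_*$. Parabolic estimates then upgrade this to smooth local convergence, and $w_*$ solves $\Delta w_*=e^{2w_*}-1$, i.e. $K=-1$.

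\textbf{Step 3 (main obstacle): completeness.} It remains to show $w_*=w_\infty$, that is, $w_*\to\infty$ at $\partial M$. I expect this to be the hard step, since in 2D the good terms used for $n\ge3$ to prove $\text{Vol}\to\infty$ vanish. The plan is to show that the boundary values satisfy $w(r_0,t)\to\infty$ at a controlled rate, and then to use interior subsolutions that blow up at $\partial M$.

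To push $w(r_0,t)$ to infinity I will use the boundary condition: $e^{w}\eta=\partial_\nu w+\coth r_0$. As long as $\eta$ exceeds its limiting model value by an amount growing like $\int\rho'$, the boundary values must increase. The lower bounds $(\ref{inequn_rhoinfty})$ and $(\ref{inequn_rhoinfty2})$ give $\rho\gtrsim\epsilon\ln\ln(1+t)$ for large $t$; in the case $m>1$ this follows after multiplying by the integrating factor coming from $\xi$, since $1-\xi^{-1}$ decays exponentially. The growth is slow but divergent. To convert it into $w(r_0,t)\to\infty$, I will build a lower barrier of the form $\underline w=\max$ of $\bar w$ and a rescaled Loewner--Nirenberg solution on a slightly larger ball $B_{r_0+\delta(t)}$ with $\delta(t)\downarrow0$. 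The rate of $\delta(t)$ is chosen so that its boundary curvature stays below $\eta(t)$ and its time derivative is dominated, which is where the $\ln\ln$ rate enters.

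If this works, $w_*\ge\lim_t\underline w=w_\infty$, and together with Step 2 this gives $w_*=w_\infty$ and completes the proof. The delicate point is verifying the subsolution inequality near $\partial M$ with $\delta'(t)$ of order $\big((1+t)\ln(1+t)\big)^{-1}$. I would first try $\delta(t)\sim(\ln(1+t))^{-1}$ and check the resulting inequalities directly.
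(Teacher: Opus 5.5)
\textbf{Steps 1--2.} These follow the paper. You have $w_t-\bar w_t=-F_1$ with $F_1=K_g+\xi^{-1}$. The sign $F_1\le0$ comes from $\partial_{n_g}F_1=\eta F_1+(1-\xi^{-1})\rho-\rho'$ on $\partial M$. Because the Robin coefficient $\eta>0$ has the unfavourable sign, you need a weight such as the paper's $e^{-at-N\tau(x)}$. Also, the monotone quantity is $w_t-\frac{\xi'}{2\xi}\ge0$, not $w_t+\frac{\xi'}{2\xi}$.

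One claim in Step 2 is false as stated. The Loewner--Nirenberg solution $W_{r_1}$ on $B_{r_1}$ satisfies $W_{r_1}(0)=\ln\coth(r_1/2)$, so $w(\cdot,0)=\frac12\ln m\le W_{r_1}$ fails when $m>\coth^2(r_1/2)$. Compare instead with $W_{r_1}+\frac12\ln\xi(t)$. This is an exact solution (it is $\xi(t)e^{2W_{r_1}}g_{-1}$) and it lies above $w(\cdot,0)$. It gives $\limsup_{t\to\infty}w\le W_{r_1}$ for every $m$, and is simpler than the Hamilton-trick argument the paper imports from \cite{Li3} for $m>1$.

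\textbf{Step 3 has a genuine gap.} It is only a plan, and the difficulties you flag are not the real ones. Take $\Phi=V_{\delta(t)}+c(t)$ with $V_\delta=W_{r_0+\delta}$.

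The interior inequality is $c'+\delta'\partial_\delta V_\delta\le e^{-2c}-1$. A negative shift $c$ is forced, since $V_\delta$ is an exact static solution. Because $|\partial_\delta V_\delta|\lesssim 1+\delta^{-1}$ on $\overline{B_{r_0}}$, the inequality holds with $c'=-2c-h$, where $h=|\delta'|\sup_{\overline{B_{r_0}}}|\partial_\delta V_\delta|$. This works for any $\delta\downarrow0$ with $\delta'/\delta\to0$, and then $c\to0$.

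The boundary inequality $\partial_\nu\Phi+\coth r_0\le\eta e^{\Phi}$ is equivalent to $e^{-c}\coth s_\delta\le\eta(t)$, where $s_\delta$ is the $e^{2V_\delta}g_{-1}$-radius of $\partial B_{r_0}$. Since $\coth s_\delta\downarrow1$ while $\eta\ge\xi^{-1/2}\coth r_0\to\coth r_0>1$, this is harmless for large $t$. So neither $\rho\gtrsim\ln\ln t$ nor a $\ln\ln$ rate for $\delta$ plays any role. The heuristic that the boundary condition pushes $w(r_0,t)$ up is not an argument.

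What you never address is how the barrier gets under $w$ in the first place, and this is the only place where $\rho$ matters. At $t=0$ it is impossible, since it would force $\partial_\nu V_\delta(r_0)\le0$. For $\rho\equiv0$ it is impossible at every time, as it must be: then $w=\bar w$ and the limit $g_{-1}$ is incomplete.

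The missing step is as follows.
\begin{enumerate}
\item Pick $t_0>0$ and set $c_0=\bar w(t_0)-V_{\delta_0}(r_0)$. Since $V_\delta$ is radially increasing, $\Phi(\cdot,t_0)\le\bar w(t_0)\le w(\cdot,t_0)$.
\item The boundary inequality at $t_0$ then reads exactly $\partial_\nu V_{\delta_0}(r_0)\le\xi(t_0)^{1/2}\rho(t_0)$. This holds with room to spare for $\delta_0$ large, because $\rho(t_0)>0$.
\item With $t_0,\delta_0$ large and $\delta$ decreasing slowly, this slack persists for all $t\ge t_0$.
\item The comparison principle quoted from \cite{Li3} then gives $w\ge\Phi$, hence $w_*\ge W_{r_0}$, and with Step 2, $w_*=W_{r_0}$.
\end{enumerate}
In your $\max(\bar w,\cdot)$ version the same condition appears at the first time the second branch reaches $\partial M$.

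\textbf{Comparison with the paper.} Completed this way, your Step 3 is a genuinely different route. The paper proves $\text{Vol}_{g(t)}(M)\to\infty$ from $y''+2y'\ge4\pi b(r_0,t)[\rho'-(1-\xi^{-1})\rho]$, obtained using $F_1\le0$. That argument needs a non-integrable right-hand side, which is exactly why $(\ref{inequn_rhoinfty})$ and $(\ref{inequn_rhoinfty2})$ are assumed. The barrier uses only $\rho(t_0)>0$ at one time, so it would dispense with those hypotheses, in line with the paper's remark that they may be unnecessary. The price is that it relies on the conformal structure of the two-dimensional flow.
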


We remark that the conditions $(\ref{inequn_rhoinfty})$ and $(\ref{inequn_rhoinfty2})$ might not be necessary, but here we need them when we use the variation argument on the volume of $M$ with respect to $g(t)$ to show the completeness of the limit metric $g_{\infty}$ on $M$. As a direct consequence of Theorem \ref{thm_convern2} above, and Theorem 1.3 in \cite{Li3}, we obtain:
\begin{cor}
Let $m>0$ and $\overline{M}=\overline{B_{r_0}}(0)$ be a closed geodesic ball of radius $r_0>0$ on the hyperbolic space $(\mathbb{H}^2,g_{-1})$. Let $\eta=\eta(t)$ satisfies the conditions in Theorem \ref{thm_convern2}. Assume $u_0\in C^{2+\alpha}(\overline{M})$ and $\psi\in C^{1+\alpha,\frac{1}{2}+\frac{\alpha}{2}}(\partial M\times[0,T])$ for all $T>0$, and also the compatibility condition holds on $\partial M\times\{0\}$:
\begin{align}
k_{e^{2u_0}g_{-1}}=\psi(\cdot,0).
\end{align}
Suppose $u_0\geq\frac{1}{2}\ln(m)$ on $\overline{M}$, and $\psi$ satisfies
\begin{align*}
\eta\leq \psi\leq y_1(t)^{\frac{1}{3}}-2
\end{align*}
on $\partial M\times[0,\infty)$, where $y_1(t)\in C^3([0,\infty))$ is some positive function satisfies $(\ref{inequn_ODEgrowth})$ for $t\in[0,\infty)$. Then there exists a unique solution $g(t)$ for all $t>0$ to $(\ref{equn_NRF1})$ with the initial-boundary conditions:
\begin{align*}
&k_{g(t)}=\psi,\,\,\text{on}\,\,\partial M\times[0,\infty),\\
&g(0)=e^{2u_0}g_{-1}\,\,\text{on}\,\,\overline{M}.
\end{align*}
Moreover, $g(t)$ converges locally uniformly to a complete hyperbolic metric in the interior of $\overline{M}$, as $t\to\infty$.
\end{cor}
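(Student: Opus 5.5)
The corollary follows from combining Theorem \ref{thm_convern2} with Theorem 1.3 of \cite{Li3}, using a comparison (sandwich) argument. In dimension two, the normalized Ricci flow within a conformal class is a scalar parabolic equation. Writing $g(t)=e^{2u(t)}g_{-1}$, the flow $(\ref{equn_NRF1})$ with $n=2$ becomes
\begin{align*}
u_t=e^{-2u}\Delta_{g_{-1}}u+e^{-2u}-1 .
\end{align*}
The geodesic curvature condition $k_{g(t)}=\psi$ becomes a nonlinear Neumann condition $\partial_\nu u+\coth(r_0)=\psi e^{u}$ on $\partial M$, where $\nu$ is the outward $g_{-1}$-normal. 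No gauge issue arises, since every metric on a surface is locally conformally flat.

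\textbf{Step 1: existence.} I would invoke Theorem 1.3 of \cite{Li3}. That result gives long-time existence and uniqueness for this conformal flow under the stated hypotheses: $u_0\in C^{2+\alpha}$, the compatibility condition on $\partial M\times\{0\}$, and the upper bound $\psi\leq y_1(t)^{\frac13}-2$ with $y_1'\geq3y_1+1$. The role of the upper bound is to rule out finite-time blow-up of $u$ at the boundary.

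\textbf{Step 2: lower barrier.} Let $\underline{g}(t)=e^{2\underline u}g_{-1}$ be the solution from Theorem \ref{thm_convern2}, with initial value $\frac12\ln m$ and boundary data $\eta$. Its existence for all time and its local uniform convergence to a complete hyperbolic metric come from that theorem. Here $u_0\geq\frac12\ln m=\underline u(0)$ on $\overline M$, and $\psi\geq\eta$ on $\partial M$. The boundary operator $B(u)=\partial_\nu u+\coth r_0-\psi e^{u}$ is monotone decreasing in $\psi$. So $\underline u$ is a subsolution of the problem solved by $u$, and the parabolic maximum principle with Hopf lemma gives $u\geq\underline u$ on $\overline M\times[0,\infty)$. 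The Hopf argument works at a first touching point on the boundary: there $u=\underline u$, so $e^{u}=e^{\underline u}$ and $\psi\geq\eta$ yields the correct sign.

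\textbf{Step 3: upper barrier.} On compact subsets of $M$, I would use the standard interior upper bound for $u$ from \cite{Li3}. This comes from comparison with the complete hyperbolic metric on a slightly smaller ball, or from the Loewner--Nirenberg-type barrier $e^{2w}g_{-1}$ with $w\to\infty$ on $\partial B_{r}$ for $r<r_0$. Such barriers are supersolutions of the flow, since $e^{-2w}\Delta w+e^{-2w}-1\leq0$ when $e^{2w}g_{-1}$ has curvature $\leq-1$.

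\textbf{Step 4: convergence.} Since $\underline u\to u_\infty$, the complete hyperbolic solution, we have $\liminf u\geq u_\infty$. The barriers on the balls $B_r$ decrease to $u_\infty$ as $r\to r_0$, giving $\limsup u\leq u_\infty$ locally uniformly. Uniqueness of the complete hyperbolic metric on the disc, namely the Loewner--Nirenberg solution, then identifies the limit.

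\textbf{Main obstacle.} The hard part is the upper bound in Step 3: it must hold uniformly in time, not just asymptotically. I would first try the ordinary differential equation comparison from \cite{Li3}. In the interior, the supersolution $\bar w(t)$ solves $\bar w'=e^{-2\bar w}-1$ and starts from $\sup u_0$, so it converges to $0$. Combining it with a Loewner--Nirenberg barrier gives $u\leq\max(\bar w,w_{r})$ on $B_r$ for all $t$. Smoothness of the convergence then follows from interior parabolic estimates.
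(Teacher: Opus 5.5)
Your architecture is the paper's, which obtains the corollary directly from Theorem \ref{thm_convern2} and Theorem 1.3 of \cite{Li3}. You take existence from \cite{Li3} and the lower barrier $u\ge\underline u$ from the solution of Theorem \ref{thm_convern2} via the comparison principle (Theorem 3.1 of \cite{Li3}); your sign check using $\eta\le\psi$ and $e^{\underline u}>0$ is right. The upper bound then comes from Loewner--Nirenberg solutions.

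The gap is in Step 3, exactly where you place the main obstacle.
\begin{itemize}
\item Your supersolution criterion has the wrong sign. A time-independent $w$ is a supersolution iff $0\ge e^{-2w}(\Delta_{g_{-1}}w+1)-1=-(K_{e^{2w}g_{-1}}+1)$, i.e.\ iff its curvature is $\ge -1$. Curvature $\le -1$ gives a subsolution; $w_r$ works only because its curvature is exactly $-1$.
\item The static comparison $u\le w_r$ needs $u_0\le w_r$ on $B_r$. This fails in general, since $u_0$ is only bounded from below.
\item Your repair $u\le\max(\bar w,w_r)$ is not justified by any comparison principle. The maximum of two solutions has a convex kink and is a subsolution, not a supersolution. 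Already for the heat equation, the solution with initial data $x^+$ lies strictly above $x^+$ for $t>0$.
\item $\bar w$ is not a barrier by itself either. The inequality $u\le\bar w$ is unknown on $\partial B_r$. On $\partial M$ the supersolution condition $\coth r_0\ge\psi e^{\bar w}$ fails for large $t$, since $\bar w\to0$ while $\psi\ge\eta>\coth r_0$ eventually.
\end{itemize}

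The missing idea, which the paper imports from Section 4 of \cite{Li3}, is to shift the Loewner--Nirenberg solution additively rather than take a maximum. Since $\Delta_{g_{-1}}w_r+1=e^{2w_r}$, set $W(x,t)=w_r(x)+c(t)$ with $c'=e^{-2c}-1$ and $c(0)=\max\{0,\sup_{B_r}(u_0-w_r)\}$. Then $W$ is an exact solution on $B_r$, equals $+\infty$ on $\partial B_r$, and lies above $u_0$. Hence $u\le w_r+c(t)\le w_r+c(0)$ on $B_r\times[0,\infty)$, with $c(t)\to0$.

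Equivalently, apply Hamilton's trick to $\phi(t)=\sup_{B_r}(u-w_r)$, whose supremum is attained in the interior, to get $\phi'\le e^{-2\phi}-1$. This gives both the time-uniform local bound needed for interior estimates and $\limsup_{t\to\infty}u\le w_r$. Letting $r\to r_0$ with $w_r\downarrow u_{LN}$, your Step 4 then goes through.
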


Throughout of this article, the notation $\overline{M}=\overline{B_{r_0}}(0)$ always means the closed $r_0$-ball centered at the origin in the Euclidean space $\mathbb{R}^n$, and the coordinates on $\overline{M}$ will always be expressed using the polar coordinates $(r,\mathbb{S}^{n-1})$ on $\mathbb{R}^n$ with $0\leq r\leq r_0$, and moreover, the starting hyperbolic metric of the Ricci flow on $\overline{M}$ is expressed as $(\ref{equn_mhyperbolicmetric})$ for $0\leq r\leq r_0$. The notation $\overline{B_r}(0)$ always means the $r$-ball centered at the origin in $\mathbb{R}^n$, with $B_r(0)$ its interior and $\partial B_r(0)$ its boundary, for $0\leq r\leq r_0$.

\vskip0.2cm
{\bf Acknowledgements.} The author would like to thank Yuxing Deng, Li Wu, Yunrui Zheng and Tao Tao for helpful discussion. The author would also like to thank Professor Sun-Yung Alice Chang for her encouragement and helpful discussion.

\vskip0.2cm

\section{Preliminaries}

Let $\overline{M}=\overline{B_{r_0}}(0)\subseteq \mathbb{R}^n$, for $n\geq 2$. By the discussion below Theorem \ref{thm_main} in the introduction, there exists a unique solution $g(t)$ to $(\ref{equn_NRF1})-(\ref{equn_Ninc2m})$ on $\overline{M}\times[0,T)$ with $T>0$ the largest existence time, and $g(t)$ is rotationally symmetric and of the following form:
\begin{align}\label{equn_metricpolar}
g=a(r,t)^2dr^2+b(r,t)^2g_{\mathbb{S}^{n-1}},
\end{align}
under the geodesic polar coordinates on $(\overline{M}, g_{-1})$, where for $0\leq r\leq r_0$,
\begin{align}
a(r,0)=\sqrt{m},\,\,\,\,b(r,0)=\sqrt{m}\sinh(r).
\end{align}
By direct computation, the normalized Ricci flow is not a conformal flow for $n\geq3$, unless $g(t)$ has constant sectional curvature for any $t\geq0$, which contradicts our assumptions. Denote $s$ the distance function to the origin under the metric $g(t)$ for $t\geq0$, i.e.,
\begin{align}\label{equn_saformul}
s=s(r,t)=\int_0^ra(x,t)dx.
\end{align}
Thus by the regularity of $g$ at the origin,
\begin{align}\label{equn_originr}
\frac{d}{ds}b(0,t)=1,\,\,\,\frac{d^{2k}}{d s^{2k}}b(0,t)=0,
\end{align}
for $t\geq0$ and any integer $k\geq0$. Therefore, for any $t\geq0$,
\begin{align*}
&\frac{\partial}{\partial r}=a\,\frac{\partial}{\partial s},\\
&g(t)=ds^2+ b^2g_{\mathbb{S}^{n-1}}.
\end{align*}
Direct calculation yields the formula for the Ricci tensor:
\begin{align}
\text{Ric}_g=-(n-1)b^{-1}b_{ss}''a^2dr^2\,-\,b[b_{ss}''-(n-2)b^{-1}(1-(b_s')^2)]\,g_{\mathbb{S}^{n-1}}.
\end{align}
Since $g(t)$ is rotational symmetric, by adopting the same notations as in \cite{AK,DG}, we denote the sectional curvatures of the $2$-plane perpendicular to the fibers $\{r\}\times \mathbb{S}^{n-1}$ and of the $2$-planes tangential to the fibers by $K$ and $L$ respectively, and hence the curvature of $g$ is entirely described by $K$ and $L$ with the formulas (see \cite{Li4})
 \begin{align}
 K=-b^{-1}b_{ss}'',\,\,\,L=b^{-2}(1-(b_s')^2).
 \end{align}
By $(\ref{equn_originr})$, when $g(t)$ is of $C^{k+2}$ in $\overline{M}$, we obtain that $K(\cdot,t),L(\cdot,t)\in C^k(\overline{M})$. For convenience of later discussion, following \cite{Li4}, we employ the following curvature functions:
\begin{align}
&F=F(r,t)\triangleq-(n-1)b^{-1}b_{ss}''=(n-1)K,\\
&G=G(r,t)\triangleq -b^{-1}b_{ss}''+(n-2)b^{-2}(1-(b_s')^2)=K+(n-2)L,\\
&P=F+n-1,\,\,\,Q=G+n-1,\\
&F_1=F+(n-1)\xi(t)^{-1},\,\,\,\,F_2=G+(n-1)\xi(t)^{-1}.
\end{align}
Therefore, the scalar curvature has the expression
\begin{align*}
R_g=F+(n-1)G.
\end{align*}
From $(\ref{equn_NRF1})$, we derive the evolution equations for $a$ and $b$:
\begin{align}
&\label{equn_NRFa}a_t=(n-1)a(b^{-1}b_{ss}''-1) = -P\,a,\\
&\label{equn_NRFb}b_t=b[b^{-1}b_{ss}''-(n-2)b^{-2}(1-(b_s')^2)-n+1]=-Q\,b,
\end{align}
and hence for $(r,t)\in[0,r_0]\times[0,T)$, we obtain
\begin{align}
&\label{equn_aintt0}a(r,t)=a(r,0)e^{-\int_0^tP(r,\tau)d\tau}=a(r,0)e^{-\int_0^tF_1(r,\tau)+(n-1)(1-\xi(\tau)^{-1})d\tau}=a(r,0)(m^{-1}\xi(t))^{\frac{1}{2}}e^{-\int_0^tF_1(r,\tau)d\tau},\\
&\label{equn_bintt0}b(r,t)=b(r,0)e^{-\int_0^tQ(r,\tau)d\tau}=b(r,0)e^{-\int_0^tF_2(r,\tau)+(n-1)(1-\xi(\tau)^{-1})d\tau}=b(r,0)(m^{-1}\xi(t))^{\frac{1}{2}}e^{-\int_0^tF_2(r,\tau)d\tau}.
\end{align}
We now recall the following lemma in \cite{Li4} about the convexity of the geodesic spheres $\partial B_{r}(0)$.
\begin{lem}\label{lem_Hp} (Lemma 2.1 in \cite{Li4})
If $\eta(t)>0$ for $t\in[0,T)$, then $b_r'(r,t)>0$ for $(r,t)\in[0,r_0]\times[0,T)$.
\end{lem}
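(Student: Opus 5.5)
The idea is to study the quantity $b_s'=a^{-1}b_r'$ (equivalently $b_r'$) as the solution of a parabolic equation, and apply the maximum principle. We use its values at the two ends of the interval $[0,r_0]$ to keep it positive. At the origin, $(\ref{equn_originr})$ gives $b_s'(0,t)=1>0$ for all $t$. At the boundary, rotational symmetry gives $H(g)=(n-1)b^{-1}b_s'$ on $\partial M$. The boundary condition $(\ref{equn_Nbdc1})$ therefore gives $b_s'(r_0,t)=\frac{\eta(t)}{n-1}b(r_0,t)>0$ whenever $\eta(t)>0$, since $b>0$ away from the origin. Initially $b_s'(r,0)=\cosh(r)>0$. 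So $b_s'$ is positive on the parabolic boundary of $[0,r_0]\times[0,T)$.

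Next I would derive the evolution equation of $w=b_s'$. Use $\partial_t\partial_s=\partial_s\partial_t+P\partial_s$, which follows from $a_t=-Pa$ in $(\ref{equn_NRFa})$, and $b_t=-Qb$ from $(\ref{equn_NRFb})$. Writing $Q$ in terms of $b_{ss}''$ and $b_s'$, one should get a linear parabolic equation for $w$ of the form
\begin{align*}
w_t=w_{ss}''+\big((n-2)b^{-1}b_s'\big)w_s'+c\,w,
\end{align*}
where $c$ is built out of $P$, $Q$ and $b^{-2}(1-(b_s')^2)$, possibly together with terms in $b^{-2}$. The exact form of $c$ is irrelevant. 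It only needs to be bounded on $[\delta,r_0]\times[0,T']$ for each $T'<T$, and this holds because the solution is smooth there and $b\ge c_\delta>0$.

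With this equation, suppose $w$ first vanishes at some interior point $(r_1,t_1)$ with $r_1>0$. Set $\tilde w=e^{-\lambda t}w$ with $\lambda>\sup|c|$ and apply the maximum principle to $\tilde w$ on $[\delta,r_0]\times[0,t_1]$, with $\delta$ small. This is legitimate because near the origin $w$ stays close to $1$ by continuity and $(\ref{equn_originr})$, uniformly for $t\le t_1$. The positive boundary values then force $\tilde w\ge\min(\text{parabolic boundary})>0$, a contradiction. Hence $b_s'>0$, and $b_r'=a\,b_s'>0$, using that $a>0$ from $(\ref{equn_aintt0})$.

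The main obstacle is the degeneracy at $r=0$, where $b^{-1}$ blows up in the coefficients. I handle it by cutting out a small neighborhood of the origin, using the regularity condition $b_s'(0,t)=1$ and continuity to keep $w$ near $1$ on $[0,\delta]\times[0,t_1]$. The other point needing care is the computation of the commutator and of the zeroth-order coefficient. Only its local boundedness matters, so any sign issue there is avoided by the exponential weight.
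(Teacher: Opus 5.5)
Your overall strategy is the natural proof of the cited lemma, which the paper quotes without proof. You treat $w=b_s'$ as a solution of a linear parabolic equation with positive data on the parabolic boundary: $w(0,t)=1$, $w(r_0,t)=\frac{\eta(t)}{n-1}b(r_0,t)>0$ and $w(r,0)=\cosh r$. You then remove $[0,\delta]$ by uniform continuity. The paper records the equation explicitly as $(\ref{equn_bst})$:
\begin{align*}
w_t=w_{ss}''+(n-3)b^{-1}b_s'\,w_s'+(n-2)b^{-2}\big(1-(b_s')^2\big)\,w .
\end{align*}
So the drift coefficient is $(n-3)b^{-1}b_s'$, not $(n-2)b^{-1}b_s'$, and $c=(n-2)b^{-2}(1-(b_s')^2)$. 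This discrepancy is harmless.

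The final maximum-principle step, as you state it, is false. For $\tilde w=e^{-\lambda t}w$ with $\lambda>\sup|c|$, the zeroth-order coefficient is $c-\lambda<0$. For such equations the weak minimum principle only gives $\tilde w\geq\min\{0,\min_{\partial_p}\tilde w\}$, i.e. $w\ge 0$, where $\partial_p$ is the parabolic boundary. It does not give $\tilde w\ge\min_{\partial_p}\tilde w$. For example, the solution of $\tilde w_t=\tilde w_{ss}''-\tilde w$ on $[0,1]\times[0,1]$ with $\tilde w\equiv1$ on $\partial_p$ satisfies $\tilde w<1$ inside for $t>0$. Moreover, at your first zero $(r_1,t_1)$ you have $\tilde w=0$, so the pointwise argument yields no contradiction.

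The fix is routine. On $[\delta,r_0]\times[0,t_1]$ you already know $w\ge0$. So $\hat w=e^{+\lambda t}w$ satisfies $\hat w_t-\hat w_{ss}''-\beta\,\hat w_s'=(c+\lambda)\hat w\ge 0$, with $\beta=(n-3)b^{-1}b_s'$. This operator has no zeroth-order term. Apply the weak minimum principle to $\hat w+\epsilon t$ (in $r$-coordinates it is uniformly parabolic with bounded coefficients there) and let $\epsilon\to0$. This gives $\hat w\ge\min_{\partial_p}\hat w>0$, contradicting $\hat w(r_1,t_1)=0$. Alternatively, use the strong maximum principle: $w\ge0$ with an interior zero at $(r_1,t_1)$ forces $w\equiv0$ on $[\delta,r_0]\times[0,t_1]$, contradicting $w(\cdot,0)=\cosh r>0$.

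With this correction the argument is complete, and $b_r'=a\,b_s'>0$ follows since $a>0$ by $(\ref{equn_aintt0})$.
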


For any $t\geq0$, using the Taylor expansion of $b$ near $r=0$ (i.e., $s=0$),
\begin{align*}
b(r,t)=s+\frac{s^3}{3!}\partial_s^3b(0,t)+\frac{s^5}{5!}\partial_s^5b(0,t)+O(s^7),
\end{align*}
we get
\begin{align}
F_2-F_1=Q-P=(n-2)b^{-2}[1-(b_s')^2]+(n-2)b^{-1}b_{ss}''=O(s^2).
\end{align}
In particular,
\begin{align}\label{equn_originPQ}
&P(0,t)=Q(0,t),\\
&F_1(0,t)=F_2(0,t),
\end{align}
for $t\geq0$. Direct computation yields
\begin{align}\label{equn_Pn-1Qb0}
P-(n-1)Q&=F_1-(n-1)F_2+(n-2)(n-1)(\xi(t)^{-1}-1)\\
&=-(n-1)(n-2)[b^{-2}(1-(b_s')^2)+1]=-(n-1)(n-2)[b^{-2}+1-\frac{H_g^2}{(n-1)^2}]\nonumber
\end{align}
with $H_g=H_g(r,t)$ the mean curvature of $\partial B_r(0)$ with respect to the metric $g(t)$, and hence,
\begin{align}\label{equn_Pn-1Qs}
\frac{\partial}{\partial s}(F_1-(n-1)F_2)=\frac{\partial}{\partial s}(P-(n-1)Q)=-2(P-Q)H_g=-2(F_1-F_2)H_g,
\end{align}
which plays an important role in later discussion on the signs of $F_1$ and $F_2$. In particular,
\begin{align}
&\label{equn_Pn-1Qb}F_1-(n-1)F_2=-(n-1)(n-2)[b^{-2}-\frac{\eta(t)^2}{(n-1)^2}+\xi(t)^{-1}],\\
&\label{equn_nPn-1Qb}\frac{\partial}{\partial n_g}(F_1-(n-1)F_2)=-2(F_1-F_2)\,\eta(t),
\end{align}
on $\partial M\times[0,T)$, where $n_g$ is the outer unit normal vector field. Taking derivative of $H_g(r,t)=(n-1)b^{-1}b_s'$ with respect to $t$, and substituting $(\ref{equn_NRFb})$ and the formula
\begin{align}\label{equn_bst}
(b_s')_t'=(b_s')_{ss}''+(n-3)b^{-1}b_s'(b_s')_s'+(n-2)b^{-2}[1-(b_s')^2]b_s'
\end{align}
 into the result, we obtain
\begin{align}
\partial_tH_g=H_g(F_1+(n-1)(1-\xi(t)^{-1}))-(n-1)\partial_sF_2,
\end{align}
and hence, we have
\begin{align}\label{equn_DtHb}
\eta'(t)=\eta(t) \,(F_1+(n-1)(1-\xi(t)^{-1}))\,-\,(n-1)\frac{\partial}{\partial n_g}F_2
\end{align}
on $\partial M\times[0,T)$.

We now compute how $F_1$ and $F_2$ evolves under the flow. Similar to $(\ref{equn_bst})$, we obtain
\begin{align*}
(b_{ss}'')_t''=&\,(b_{ss}'')_{ss}''+(n-3)b^{-1}b_s'(b_{ss}'')_s'-2b^{-1}(b_{ss}'')^2-(4n-9)b^{-2}(b_s')^2b_{ss}''\\
&+(n-2)b^{-1}b_{ss}''+(n-1)b_{ss}''-2(n-2)b^{-3}(b_s')^2(1-(b_s')^2).
\end{align*}
Recall that for a rotationally symmetric function $f$,
\begin{align*}
\Delta_{g(t)}f=f_{ss}''+(n-1)b^{-1}b_s'f_s'.
\end{align*}
Therefore, for $(r,t)\in(0,r_0]\times[0,T)$ and dimension $n\geq3$, we have that
\begin{align}
&\label{equn_envP}\partial_tF_1=\Delta_gF_1+2F_1(F_1+(n-1)(1-2\xi(t)^{-1}))-2(n-1)b^{-2}(F_1-F_2)-\frac{2(n-1)}{n-2}(F_1-F_2)^2,\\
&\label{equn_envQ}\partial_tF_2=\Delta_gF_2+2F_2(F_2+(n-1)(1-2\xi(t)^{-1}))+2b^{-2}(F_1-F_2)+\frac{2}{n-2}(F_1-F_2)^2,
\end{align}
where $g=g(t)$.

For $n=2$, we have
\begin{align}\label{equn_PQn2}
P=Q=F_1+(1-\xi(t)^{-1})=F_2+(1-\xi(t)^{-1})=-b^{-1}b_{ss}''=K_g+1=\frac{1}{2}(R_g+2),
\end{align}
where $K_g$ is the Gaussian curvature and $R_g$ is the scalar curvature, and hence for $n=2$,
\begin{align}\label{equn_n2Pt}
\partial_tF_1=\Delta_gF_1+2F_1(F_1+1-2\xi(t)^{-1}).
\end{align}
Let $k_g=b(r,t)^{-1}\partial_sb(r,t)$ be the geodesic curvature of $\partial B_r(0)$ with respect to the metric $g(t)$, which is the "mean curvature" of $\partial M$ for $n=2$. Then we obtain
\begin{align}
\partial_tk_g=k_g(F_1+1-\xi(t)^{-1})-\partial_sF_1,
\end{align}
and hence, we have
\begin{align}\label{equn_Dtkb}
\eta'(t)=\eta \,(F_1+1-\xi(t)^{-1})\,-\,\frac{\partial}{\partial n_g}F_1
\end{align}
on $\partial M\times[0,T)$. The initial-boundary problem $(\ref{equn_NRF1})-(\ref{equn_Ninc2m})$ reduces to the following form
\begin{align}
&\label{equn_NRF1d2}\frac{\partial}{\partial t}g=-2(K_g+1)g,\,\,\text{in}\,\,\overline{M}\times[0,\infty),\\
&\label{equn_Nbdc1d2}k_g=\eta,\,\,\,\,\,\,\,\,\text{on}\,\,\partial M\times[0,\infty),\\
&\label{equn_Ninc2d2}g\big|_{t=0}=m\,g_{-1},\,\,\,\,\text{on}\,\,\overline{M}.
\end{align}
By $(\ref{equn_aintt0})$, $(\ref{equn_bintt0})$ and $(\ref{equn_PQn2})$, we obtain
\begin{align}
&\label{equn_aPn2}a(r,t)=a(r,0)e^{-\int_0^tP(r,\tau)d\tau}=a(r,0)e^{-\int_0^tF_1(r,\tau)+(1-\xi(\tau)^{-1})d\tau},\\
&\label{equn_bPn2}b(r,t)=b(r,0)e^{-\int_0^tP(r,\tau)d\tau}=b(r,0)e^{-\int_0^tF_1(r,\tau)+(1-\xi(\tau)^{-1})d\tau},
\end{align}
for $(r,t)\in[0,r_0]\times[0,T)$. Let $u(r,t)=-\int_0^tP(r,\tau)d\tau$, for $(r,t)\in [0,r_0]\times[0,T)$. Therefore,
\begin{align*}
g(t)=e^{2u}g(0),
\end{align*}
for $t\in[0,T)$. Therefore, the initial-boundary value problem $(\ref{equn_NRF1d2})-(\ref{equn_Ninc2d2})$ is reduced to the following form(see also \cite{Li3})
\begin{align}
&\label{equn_ut}u_t=e^{-2u}(\Delta_{g(0)}u-K_{g(0)})-1,\,\,\text{in}\,\,\overline{M}\times[0,\infty),\\
&\label{equn_unb}\frac{\partial u}{\partial n_{g(0)}}+k_{g(0)}=\eta e^u,\,\,\text{on}\,\,\partial M\times[0,\infty),\\
&\label{equn_uint}u\big|_{t=0}=0,\,\,\,\text{in}\,\,\overline{M}.
\end{align}

\vskip0.2cm

\section{Preserving curvature upper bounds along the normalized Ricci flow}\label{section_3}
Let $\overline{M}$ be of dimension $n\geq3$. In this section we will show that the curvature functions defined in the preliminary satisfy
\begin{align}\label{inequn_pin1}
(n-1)F_2\leq F_1\leq F_2\leq 0,
\end{align}
in $\overline{M}\times[0,+\infty)$ and the solution $g(t)$ exists for all time $t\geq0$. First, we show that $(\ref{inequn_pin1})$ holds for $t\geq0$ small.

\begin{lem}\label{lem_P_Q-0}
Let $n\geq3$. Let $g(t)$ be a solution to $(\ref{equn_NRF1})-(\ref{equn_Ninc2m})$ on $\overline{M}\times[0,T)$ for some $T>0$ under the compatibility conditions $(\ref{equn_comptmk})$ for some $k\geq2$. Let $\eta=\eta(t)\in C^k([0,\infty))$ satisfy the condition in Theorem \ref{thm_main}.  Then there exists $\epsilon_0\in(0,T)$ such that $(\ref{inequn_pin1})$ holds on $M\times[0,\epsilon_0]$.
\end{lem}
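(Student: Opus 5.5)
The approach is a continuity argument on a short time interval. It starts from the observation that at $t=0$ all three inequalities in $(\ref{inequn_pin1})$ hold as equalities. Since $g(0)=mg_{-1}$ has $K=L=-\frac1m$, we get $F=G=-\frac{n-1}{m}$ and $\xi(0)=m$, so $F_1(\cdot,0)=F_2(\cdot,0)=0$. The whole difficulty is therefore to push the signs in the right direction for small $t>0$. The only source of strictness is the boundary data: $\rho'>0$ on $(0,t_0)$, together with $(\ref{inequn_rhodt0})$ when $m>1$.

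\emph{Boundary conditions.} First I would record the boundary inequalities. For the model one computes from $(\ref{equn_Hbd0})$ that $\frac{d}{dt}H_{\bar g(t)}=(n-1)(1-\xi^{-1})H_{\bar g(t)}$. Substituting $\eta=H_{\bar g}+\rho$ into $(\ref{equn_DtHb})$ then gives, on $\partial M$,
\begin{align*}
(n-1)\frac{\partial F_2}{\partial n_g}=\eta F_1+\big[(n-1)(1-\xi^{-1})\rho-\rho'\big].
\end{align*}
The bracket is $\leq 0$ in all cases, and it is strictly negative on $(0,\min(t_0,\epsilon))$. For $m<1$ this holds because $1-\xi^{-1}<0$ and $\rho\geq0$. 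For $m>1$ it follows from $(\ref{inequn_rhodt0})$: the margin $\sigma(1-\xi^{-1})\rho$ is available once $\rho>0$, while for $\rho=0$ the bracket equals $-\rho'<0$. So wherever $F_1\le0$, the outward normal derivative of $F_2$ is strictly negative, and a positive boundary maximum of $F_2$ is excluded. In the same way, $(\ref{equn_nPn-1Qb})$ says $\partial_{n_g}(F_1-(n-1)F_2)=-2(F_1-F_2)\eta$, and Lemma \ref{lem_Hp} gives $\eta>0$. Finally, $(\ref{equn_Pn-1Qs})$ together with $H_g>0$ (again from Lemma \ref{lem_Hp}) shows that $F_1\le F_2$ is equivalent to $s\mapsto F_1-(n-1)F_2$ being nondecreasing. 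At the origin, $(\ref{equn_originPQ})$ gives $F_1-(n-1)F_2=-(n-2)F_2$. Hence the inequality $(n-1)F_2\le F_1$ would follow from $F_2\le0$ at the origin combined with the monotonicity coming from $F_1\le F_2$.

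\emph{Maximum principle.} Next I would run a coupled maximum-principle argument on $[0,\epsilon_0]$ for the pair $F_2\le 0$ and $F_1-F_2\le0$. From $(\ref{equn_envP})$ and $(\ref{equn_envQ})$, the difference $W=F_1-F_2$ satisfies
\begin{align*}
\partial_t W=\Delta_g W+2W\big(F_1+F_2+(n-1)(1-2\xi^{-1})\big)-2n\,b^{-2}W-\tfrac{2n}{n-2}W^2 .
\end{align*}
This is a linear equation in $W$ with the good-signed zero-order term $-2nb^{-2}W$; the term $-\frac{2n}{n-2}W^2$ only helps for an upper bound. Meanwhile $F_2$ satisfies an equation whose coupling term $2b^{-2}W+\frac{2}{n-2}W^2$ is $\le$ something of size $O(|W|)$ when $W\le0$. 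To obtain strictness I would perturb, considering $F_2-\delta e^{Ct}$ and $W-\delta e^{Ct}$ with $C$ chosen using the $C^{2,1}$ bounds of the solution on $[0,\epsilon_0]$. Suppose either quantity first touches zero at a time $t_*>0$. At an interior point this contradicts the equations. At the boundary, $W$ is handled through the monotonicity of $F_1-(n-1)F_2$ together with its explicit boundary value $(\ref{equn_Pn-1Qb})$, and $F_2$ is handled by the strict Neumann sign above. The origin needs care because of the singular terms $b^{-2}$. There I would use that $W=O(s^2)$ and argue on the quotient $W/s^2$, or use the smooth Cartesian form of the equations, as in \cite{Li4}.

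\emph{Main obstacle.} I expect the hardest step to be the degeneracy at $t=0$: all quantities vanish identically initially, so the perturbation $\delta e^{Ct}$ is violated at $t=0^+$ unless we know the sign is already correct to first nonvanishing order. To handle this I would use the compatibility conditions $(\ref{equn_comptmk})$. They imply that $g(t)$ agrees with $\bar g(t)$ to order $k$ at $t=0$, so $F_1,F_2=O(t^{k})$ and the leading behaviour is driven by the boundary flux $\rho'$. I would then let $\delta\to0$ after the comparison, carrying out the comparison on $[\tau,\epsilon_0]$ and letting $\tau\to0$. For $m>1$, the constant $\sigma$ in $(\ref{inequn_rhodt0})$ is exactly what keeps the boundary flux strictly favourable, uniformly relative to $\rho$, during this limiting process.
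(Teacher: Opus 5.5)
Your bookkeeping is correct: $F_1=F_2=0$ at $t=0$, and $(n-1)\partial_{n_g}F_2=\eta F_1+B$ with $B=(n-1)(1-\xi^{-1})\rho-\rho'\le0$. So are the equation for $W=F_1-F_2$ and the final step $(n-1)F_2\le F_1$ via $(\ref{equn_Pn-1Qs})$ and $F_1=F_2$ at the origin, which is how the paper concludes.

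The gap is the coupled maximum principle for $F_2\le0$ and $W\le0$, which carries all the difficulty of the lemma. With spatially constant barriers $\delta e^{Ct}$ it does not close.
\begin{itemize}
\item At a boundary touching point of $F_2-\delta e^{Ct}$ you do not have $F_1\le0$, only $F_1=F_2+W\le2\delta e^{Ct}$. So $\eta F_1+B<0$ would need $|B(t_*)|>2\eta\delta e^{Ct_*}$. This cannot be guaranteed since $B(0)=0$, and a constant barrier has no normal derivative to compensate.
\item For $W$, routing the boundary term through monotonicity gives $(F_1-(n-1)F_2)(r_0,t)=-(n-2)F_2(0,t)-2\int_0^{s(r_0,t)}H_gW\,ds$. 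The bound $W\le\delta e^{Ct}$ is useless there, because $\int_0^{s}H_g\,ds=(n-1)\ln b$ diverges at the origin.
\item Routing it through $(\ref{equn_Pn-1Qb})$ instead needs an upper bound on $\int_0^tF_2(r_0,\tau)\,d\tau$. That is precisely the circularity between the two boundary signs that has to be broken.
\item In the interior, the coupling term $2b^{-2}W$ in $(\ref{equn_envQ})$ is not $O(\delta)$ near the origin when all you know is $W\le\delta e^{Ct}$.
\end{itemize}

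Your last paragraph does not repair this, and it misplaces the difficulty.
\begin{itemize}
\item The perturbed quantities equal $-\delta<0$ at $t=0$, so nothing is violated at $t=0^+$.
\item The hypotheses allow $\rho$ to be flat at $0$ (e.g.\ $\rho'(t)=e^{-1/t}$ near $0$), so there is no leading order driven by $\rho'$.
\item $C^{2k+\alpha,k+\frac{\alpha}{2}}$ regularity does not give $F_i=O(t^k)$, and such a bound would carry no sign anyway.
\item Comparing on $[\tau,\epsilon_0]$ and letting $\tau\to0$ presupposes the signs at $t=\tau$.
\end{itemize}

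The paper breaks the circularity in three steps. It uses the small-time expansion $(\ref{equn_Pn-1Qb1})$ of $F_1-(n-1)F_2$ at $r_0$ in terms of $\int_0^tF_2(r_0,\tau)\,d\tau$ and $\rho H_{\bar g(t)}|_{\partial M}$. It then splits $(\ref{equn_nP-Qb})$ into $I_1+\dots+I_4$ and runs the contradiction argument at the times $t_1,t_2$. Finally it solves the linear ODE $(\ref{equn_xibODE})$ for $\int_0^tF_2(r_0,\tau)\,d\tau$.

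If you want to keep a comparison argument instead, three ingredients are missing.
\begin{enumerate}
\item[(i)] The first line of $(\ref{equn_nP-Qb})$ is the \emph{local} Robin condition $\partial_{n_g}W=-\frac{n}{n-1}\eta W+\frac{n-2}{n-1}(\eta F_2+B)$. Together with the condition for $F_2$ it forms a cooperative system (positive off-diagonal coefficients), so no nonlocal detour is needed.
\item[(ii)] The barriers need a large outward normal derivative, as with the weight $e^{-N\tau(x)}$ in the proof of Theorem \ref{thm_F1nonpositiveD2}.
\item[(iii)] Your quotient idea must be built into the barrier globally: compare $W/b^2$ (or $W/s^2$). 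Its equation is regular at the origin because $W\Delta_gb^2/b^2-2nb^{-2}W=-(2nL+2K)W$. It also turns $2b^{-2}W$ into a bounded multiple of $\delta$.
\end{enumerate}
With these choices the touching argument closes in the interior, at the origin and on $\partial M$, using only $B\le0$.
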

\begin{proof}
Subtracting $(\ref{equn_envQ})$ from $(\ref{equn_envP})$ yields
\begin{align}\label{equn_P-Qevol}
\partial_t(F_1-F_2)=\Delta_g(F_1-F_2)+2(F_1-F_2)[(n-1)(1-2\xi(t)^{-1})-nb^{-2}-\frac{2}{n-2}(F_1-(n-1)F_2)],
\end{align}
on $\overline{M}\times[0,T)$. Recall that $F_1(r,0)=F_2(r,0)=0$ for $0\leq r\leq r_0$. By continuity of $F_1,\,F_2$ and $b$, we have that there exists $\epsilon_1>0$ small such that
\begin{align*}
(n-1)(1-2\xi(t)^{-1})-nb^{-2}-\frac{2}{n-2}(F_1-(n-1)F_2)<(n-1)(1-2\xi(t)^{-1})
\end{align*}
for $0<r\leq r_0$ and $0\leq t\leq \epsilon_1$. Let $C=2(n-1)\max\{0,m-1\}$. Hence,
\begin{align}\label{equn_P-Qevolct}
&\partial_t[e^{-Ct}(F_1-F_2)]\\
=\,\,&\Delta_g[e^{-Ct}(F_1-F_2)]+2e^{-Ct}(F_1-F_2)[-C+(n-1)(1-2\xi(t)^{-1})-nb^{-2}-\frac{2}{n-2}(F_1-(n-1)F_2)],\nonumber
\end{align}
with
\begin{align*}
-C+(n-1)(1-2\xi(t)^{-1})-nb^{-2}-\frac{2}{n-2}(F_1-(n-1)F_2)<0
\end{align*}
on $\overline{M}\times[0,\epsilon_2]$ for some $\epsilon_2\in(0,\epsilon_1]$. Recall that by $(\ref{equn_originPQ})$, $F_1(0,t)-F_2(0,t)=0$ for $t\geq0$, and hence, by the maximum principle for $(\ref{equn_P-Qevolct})$ and continuity of $F_1-F_2$, the positive maximum and negative minimum of the function $e^{-Ct}(F_1-F_2)$, and hence of $F_1-F_2$, on $\overline{M}\times[0,t_1]$ for any $t_1\in(0,\epsilon_2)$ can only be achieved on $\partial M\times(0,t_1]$. Therefore, once it is proved that $F_1-F_2\leq0$ on $\partial M\times[0,T_1]$ for some $T_1\in(0,\epsilon_2)$, it follows that $F_1(r,t)-F_2(r,t)<0$ for $(r,t)\in(0,r_0)\times(0,T_1]$.

Now we define
\begin{align*}
\bar{b}(t)=b(r_0,0)e^{(n-1)\int_0^t(\xi(\tau)^{-1}-1)d\tau}=b(r_0,0)(m^{-1}\xi(t))^{\frac{1}{2}},
\end{align*}
for $t\geq0$. For $t>0$ small, by $(\ref{equn_bintt0})$ and the Taylor expansion, we have
\begin{align}
b(r,t)=\bar{b}(t)\,e^{-\int_0^tF_2(r,\tau)d\tau}=\bar{b}(t)[1\,-\,(1+o(1))\,\int_0^tF_2(r,\tau)d\tau]
\end{align}
where $o(1)\to0$ uniformly for $r\in[0,r_0]$ as $t\to0$. Thus, by $(\ref{equn_Pn-1Qb})$, $(\ref{equn_bintt0})$ and $(\ref{equn_comptmk})$,
\begin{align}\label{equn_Pn-1Qb1}
&F_1(r_0,t)-(n-1)F_2(r_0,t)\\
=&-(n-1)(n-2)[\xi(t)^{-1}+\bar{b}(t)^{-2}e^{2\int_0^tF_2(r_0,\tau)d\tau}]+\frac{n-2}{n-1}(H_{\bar{g}(t)}\big|_{\partial M}+\rho(t))^2\nonumber\\
=&-(n-1)(n-2)[\xi(t)^{-1}+\bar{b}(t)^{-2}(1+2\int_0^tF_2(r_0,\tau)d\tau)(1+o(1))]\nonumber\\
&+\frac{n-2}{n-1}((H_{\bar{g}(t)}\big|_{\partial M})^2+2\rho\,H_{\bar{g}(t)}\big|_{\partial M} (1+o(1)) )\nonumber\\
=&-2(n-1)(n-2)\bar{b}(t)^{-2}\int_0^tF_2(r_0,\tau)d\tau\,(1+o(1))\,+\,\frac{2(n-2)}{n-1}\rho\,H_{\bar{g}(t)}\big|_{\partial M}\,(1+o(1)),\nonumber
\end{align}
where $o(1)\to0$ as $t\to0$. Therefore, by $(\ref{equn_nPn-1Qb})$, $(\ref{equn_DtHb})$ and $(\ref{equn_Pn-1Qb1})$, we obtain
\begin{align}\label{equn_nP-Qb}
&\frac{\partial}{\partial n_g}(F_1-F_2)\\
=\,&(F_2-F_1)\eta-\frac{\eta}{n-1}(F_1-(n-1)F_2-(n-1)(n-2)(1-\xi(t)^{-1}))-\frac{n-2}{n-1}\eta'(t)\nonumber\\
=\,&(F_2-F_1)H_{\bar{g}(t)}\big|_{\partial M}\,(1+o(1))-\frac{H_{\bar{g}(t)}\big|_{\partial M}\,(1+o(1))}{n-1}(F_1-(n-1)F_2)\nonumber\\
&+(n-2)\rho(1-\xi(t)^{-1})-\frac{n-2}{n-1}\rho'(t)\nonumber\\
=\,&(F_2-F_1)H_{\bar{g}(t)}\big|_{\partial M}\,(1+o(1))+2(n-2)H_{\bar{g}(t)}\big|_{\partial M}\,\bar{b}(t)^{-2}\int_0^tF_2(r_0,\tau)d\tau\,(1+o(1))\nonumber\\
&-\frac{2(n-2)(H_{\bar{g}(t)}\big|_{\partial M})^2}{(n-1)^2}\rho(t)\,(1+o(1))+(n-2)\rho(t)\,(1-\xi(t)^{-1})-\frac{n-2}{n-1}\rho'(t)\nonumber\\
=\,&(F_2-F_1)H_{\bar{g}(t)}\big|_{\partial M}\,(1+o(1))+2(n-2)H_{\bar{g}(t)}\big|_{\partial M}\,\bar{b}(t)^{-2}\int_0^tF_2(r_0,\tau)d\tau\,(1+o(1))\nonumber\\
&+(n-2)\rho(t)[1-\xi(t)^{-1}-\frac{2(H_{\bar{g}(t)}\big|_{\partial M})^2}{(n-1)^2}](1+o(1))-\frac{n-2}{n-1}\rho'(t)\nonumber\\
\equiv&I_1+I_2+I_3+I_4,\nonumber
\end{align}
on $\partial M$ for $t>0$ small, where $o(1)\to0$ as $t\to0$. Therefore, by the conditions on $\rho(t)$ in Theorem \ref{thm_main}, there exist $\epsilon_3\in(0,\epsilon_2]$ and $\sigma_1>0$ such that
\begin{align}\label{inequn_i3i4}
I_3+I_4<-\sigma_1 \rho'(t)<0
\end{align}
for $t\in(0,\epsilon_3]$.

{\bf Claim:} there exists $\epsilon_4\in(0,\epsilon_3]$ such that $F_1-F_2\leq0$ on $\partial M\times[0,\epsilon_4]$.

Assume the contrary, i.e., for each $\delta>0$ small, there exists $t\in(0,\delta)$ such that
\begin{align}
F_1(r_0,t)-F_2(r_0,t)>0.
\end{align}
Thus, for each $\delta>0$, by the maximum principle for $(\ref{equn_P-Qevolct})$, 
there exists $t_1\in(0,\delta)$, such that
\begin{align}
&\label{inequn_P-Qbm}(F_1-F_2)(r_0,t_1)=\sup_{0\leq t\leq t_1}(F_1-F_2)(r_0,t)>0,\\
&\label{inequn_nP-Q0}\frac{\partial}{\partial n_g}(F_1-F_2)(r_0,t_1)\geq0,
\end{align}
and hence, the first equality in $(\ref{equn_nP-Qb})$ gives
 \begin{align*}
 F_2(r_0,t_1)&\geq \frac{\eta'(t_1)}{\eta(t_1)}+(n-1)(\xi(t)^{-1}-1)\\
 &=\frac{1}{\eta(t_1)}[\frac{d}{dt}(H_{\bar{g}}\big|_{\partial M}+\rho)+(n-1)(\xi(t_1)^{-1}-1)(H_{\bar{g}}\big|_{\partial M}+\rho)]\\
 &=\frac{1}{\eta(t_1)}[\rho'(t_1)+(n-1)\rho(t_1)(\xi(t_1)^{-1}-1)]>\frac{\sigma\,\rho'(t_1)}{(n-1+\sigma)\eta(t_1)},
 \end{align*}
 for $\delta<\epsilon$, where the last inequality follows from the condition on $\rho(t)$ in Theorem \ref{thm_main}. Also, it is clear that $I_i<0$ in $(\ref{equn_nP-Qb})$ at $(r_0,t_1)$ for $i=1,3,4$. Therefore, $I_2+I_3+I_4>0$ at $(r_0,t_1)$, and hence by $(\ref{inequn_i3i4})$,
\begin{align}\label{inequn_P-QQb}
\int_0^{t_1}F_2(r_0,\tau)d\tau>\frac{\bar{b}(t_1)^2}{2(n-2)H_{\bar{g}(t_1)}\big|_{\partial M}}\sigma_1 \rho'(t_1)\,(1+o(1))>0.
\end{align}
Thus, by continuity of $F_2$, there exists a smallest $t_2\in(0,t_1]$, such that
\begin{align}\label{inequn_Qb02}
F_2(r_0,t_2)=\sup_{0<\leq t\leq t_1}F_2(r_0,t)> \frac{\bar{b}(t_1)^2}{2(n-2)t_1\,H_{\bar{g}(t_1)}\big|_{\partial M}}\sigma_1 \rho'(t_1)\,(1+o(1))>0.
\end{align}

 Combining $(\ref{inequn_P-Qbm})$,  $(\ref{equn_Pn-1Qb1})$ and $(\ref{inequn_Qb02})$, we have
\begin{align}\label{inequn_P-Qb01}
(F_1-F_2)(r_0,t_1)\geq&(F_1-F_2)(r_0,t_2)\\
=&(n-2)F_2(r_0,t_2)-2(n-1)(n-2)\bar{b}(t_2)^{-2}\int_0^{t_2}F_2(r_0,\tau)d\tau\,(1+o(1))\nonumber\\
&+\,\frac{2(n-2)}{n-1}\rho(t_2)\,H_{\bar{g}(t_2)}\big|_{\partial M}\,(1+o(1))\nonumber\\
=&(n-2)F_2(r_0,t_2)\,(1+o(1))+\frac{2(n-2)}{n-1}\rho(t_2)\,H_{\bar{g}(t_2)}\big|_{\partial M}\,(1+o(1))\nonumber\\
\geq&\frac{\bar{b}(t_1)^2}{2t_1\,H_{\bar{g}(t_1)}\big|_{\partial M}}\sigma_1 \rho'(t_1)\,(1+o(1))\nonumber
\end{align}
with $o(1)\to0$ as $\delta\to0$.

 Now at $t=t_1$, by $(\ref{inequn_P-Qb01})$ and $(\ref{inequn_Qb02})$, the terms $I_1$ and $I_2$ in $(\ref{equn_nP-Qb})$ satisfy
\begin{align*}
I_1&=(F_2-F_1)H_{\bar{g}(t_1)}\big|_{\partial M}\,(1+o(1))\\
&\leq -(n-2)F_2(r_0,t_2)\,(1+o(1)),\\
I_2&=2(n-2)H_{\bar{g}(t_1)}\big|_{\partial M}\,\bar{b}(t_1)^{-2}\int_0^{t_1}F_2(r_0,\tau)d\tau\,(1+o(1))\\
&\leq 2(n-2)t_1H_{\bar{g}(t_1)}\big|_{\partial M}\,\bar{b}(t_1)^{-2}F_2(r_0,t_2)\,(1+o(1)),
\end{align*}
with $o(1)\to0$ and $t_1\to0$ as $\delta\to0$, and hence, $I_1+I_2<0$ at $(r_0,t_1)$. Therefore, by $(\ref{inequn_Qb02})$, $(\ref{equn_nP-Qb})$ and the fact $I_i<0$ for $i=3,4$, we have
\begin{align}
\frac{\partial}{\partial n_g}(F_1-F_2)(r_0,t_1)<0,
\end{align}
for $\delta>0$ sufficiently small, contradicting with the assumption that $(F_1-F_2)$ achieves its maximum on $\overline{M}\times[0,t_1]$ at the point $(r_0,t_1)$. This proves the {\bf Claim}. Therefore,
\begin{align*}
F_1-F_2\leq 0
\end{align*}
for $\overline{M}\times[0,\epsilon_4]$, and by the maximum principle for $(\ref{equn_P-Qevolct})$,
\begin{align}\label{inequn_P-Q-0}
F_1(r,t)-F_2(r,t)< 0
\end{align}
for $(r,t)\in(0,r_0)\times(0,\epsilon_4]$.

We now proceed to prove that $F_2(r,t)\leq 0$ on $\overline{M}\times [0,\epsilon_5]$ for some $\epsilon_5\in(0,\epsilon_4]$. First, we verify it holds on the boundary. By $(\ref{equn_Pn-1Qb1})$, we obtain
\begin{align}\label{equn_QbODE0}
&F_2(r_0,t)\,-\,2(n-1)\bar{b}(t)^{-2}\int_0^tF_2(r_0,\tau)d\tau\,(1+o(1))\\
=\,&\frac{1}{n-2}(F_1(r_0,t)-F_2(r_0,t))\,-\,\frac{2}{n-1}\rho(t)\,H_{\bar{g}(t)}\big|_{\partial M}\,(1+o(1)),\nonumber
\end{align}
for $t\in(0,\epsilon_1]$, where $o(1)\to0$ as $t\to0$. Now we denote the right-hand side of $(\ref{equn_QbODE0})$ as $f(t)$, and hence $f(t)<0$ for $t\in(0,\epsilon_4]$. We denote $\xi(t)=\int_0^tF_2(r_0,\tau)d\tau$ and get the following first order differential equation for $\xi(t)$ for $t\in[0,\epsilon_4]$:
\begin{align}\label{equn_xibODE}
\xi'(t)-2(n-1)(1+o(1))\,\bar{b}(t)^{-2} \xi(t)=f(t)<0,
\end{align}
and hence by solving $(\ref{equn_xibODE})$, we obtain
\begin{align}
\xi(t)=e^{2(n-1)\int_0^t\bar{b}(\tau)^{-2}(1+o(1))d\tau}[\int_0^te^{-2(n-1)\int_0^{\tau}\bar{b}(\tilde{t})^{-2}(1+o(1))d\tilde{t}}f(\tau)d\tau]<0,
\end{align}
and hence by $(\ref{equn_xibODE})$,
\begin{align}\label{inequn_Qb03}
F_2(r_0,t)=\xi'(t)=2(n-1)(1+o(1))\,\bar{b}(t)^{-2} \xi(t)+f(t)<0,
\end{align}
for $t\in(0,\epsilon_4]$.

Now we show $F_2<0$ on $M\times(0,\epsilon_5]$ for some $\epsilon_5\in(0,\epsilon_4]$. Recall that $F_1(r,0)=F_2(r,0)=0$ for $0\leq r\leq r_0$. Since $(F_1-F_2)\leq0$ on $\overline{M}\times[0,\epsilon_4]$, by continuity of $F_1$ and $F_2$ and the equation $(\ref{equn_envQ})$, we have that there exists $\epsilon_5\in(0,\epsilon_4]$ such that
\begin{align}\label{inequn_Qt}
\partial_tF_2\leq\Delta_gF_2+2F_2(F_2+(n-1)(1-2\xi(t)^{-1}))
\end{align}
on $\overline{M}\times[0,\epsilon_5]$. By the maximum principle for $(\ref{inequn_Qt})$ and $(\ref{inequn_Qb03})$, we have
\begin{align}\label{inequn_Q-0}
F_2<0,\,\,\,\text{on}\,\,\overline{M}\times(0,\epsilon_5].
\end{align}
Thus, by $(\ref{equn_Pn-1Qb1})$, we obtain
\begin{align*}
(F_1-(n-1)F_2)(r_0,t)<0
\end{align*}
for $t\in(0,\epsilon_5)$. By $(\ref{inequn_Q-0})$ and $(\ref{equn_originPQ})$, we have
\begin{align}
(F_1-(n-1)F_2)(0,t)=-(n-2)F_2(0,t)>0
\end{align}
for $t\in(0,\epsilon_5]$; on the other hand, by $(\ref{inequn_P-Q-0})$, $(\ref{equn_Pn-1Qs})$ and Lemma \ref{lem_Hp}, we get
\begin{align}
\frac{\partial}{\partial s}(F_1-(n-1)F_2)=-2H_g(F_1-F_2)>0
\end{align}
for $(r,t)\in(0,r_0)\times(0,\epsilon_2)$. Therefore, we have
\begin{align}
F_1>(n-1)F_2
\end{align}
for $(r,t)\in[0,r_0]\times(0,\epsilon_5]$. Now take $\epsilon_0=\epsilon_5>0$.

In summary, we finally obtain that
\begin{align}
(n-1)F_2<F_1\leq F_2<0\,\,\,\text{on}\,\,\overline{M}\times(0,\epsilon_0],
\end{align}
and moreover,
\begin{align*}
F_1< F_2
\end{align*}
for $(r,t)\in (0,r_0)\times (0,\epsilon_0]$, and hence by the first equality in $(\ref{equn_nP-Qb})$, we have
\begin{align*}
F_1< F_2
\end{align*}
for $(r,t)\in (0,r_0]\times (0,\epsilon_0]$. This completes the proof of the lemma.

\end{proof}

Next we show that $(\ref{inequn_pin1})$ holds for all time.

\begin{thm}\label{thm_PQn-1Q2}
Let $n\geq3$. Let $g(t)$ be a solution to $(\ref{equn_NRF1})-(\ref{equn_Ninc2m})$ on $\overline{M}\times[0,T)$ for some $T>0$ under the compatibility conditions $(\ref{equn_comptmk})$ for some $k\geq2$. Assume that $\eta=\eta(t)\in C^k([0,\infty))$ satisfies the condition in Theorem \ref{thm_main}. Then $(\ref{inequn_pin1})$ holds on $M\times[0,T)$. Indeed, we have
\begin{align}\label{inequn_Q_P_Q1}
(n-1)F_2<F_1\leq F_2<0
\end{align}
on $\overline{M}\times(0,T)$, and moreover,
\begin{align}\label{inequn_P-Q-2}
F_1(r,t)<F_2(r,t)
\end{align}
for $(r,t)\in(0,r_0]\times(0,T)$. 

\end{thm}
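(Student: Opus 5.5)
This is a continuity argument starting from Lemma \ref{lem_P_Q-0}. Let $t^*\in(0,T]$ be the supremum of times $t_1$ such that $(\ref{inequn_Q_P_Q1})$ holds on $\overline{M}\times(0,t_1]$ and $(\ref{inequn_P-Q-2})$ holds on $(0,r_0]\times(0,t_1]$. By Lemma \ref{lem_P_Q-0}, $t^*\geq\epsilon_0>0$. Suppose $t^*<T$. By continuity, the non-strict inequalities $(n-1)F_2\leq F_1\leq F_2\leq 0$ hold on $\overline{M}\times[0,t^*]$. We then show that all strict inequalities still hold at $t=t^*$ and persist a little beyond it, which contradicts maximality. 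The three inequalities are treated in the same order as in Lemma \ref{lem_P_Q-0}: first $F_1\leq F_2$, then $F_2<0$, then $F_1>(n-1)F_2$.

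\emph{Step 1: $F_1-F_2$.} Use the evolution equation $(\ref{equn_P-Qevol})$. Its zeroth order coefficient is bounded on any compact time interval, so after multiplying by $e^{-Ct}$ with $C$ large the strong maximum principle applies. Since $F_1-F_2=0$ at $r=0$, a positive maximum of $F_1-F_2$ on $\overline{M}\times[0,t_1]$, $t_1$ slightly beyond $t^*$, can only occur at a first time $t_1\geq t^*$ on $\partial M$, with $\partial_{n_g}(F_1-F_2)\geq0$ there. The first equality in $(\ref{equn_nP-Qb})$ together with $(\ref{equn_DtHb})$ gives, on $\partial M$,
\begin{align*}
\frac{\partial}{\partial n_g}(F_1-F_2)=(F_2-F_1)\eta-\frac{\eta}{n-1}\bigl(F_1-(n-1)F_2\bigr)+(n-2)\rho(1-\xi^{-1})-\frac{n-2}{n-1}\rho'.
\end{align*}
At the first time where $F_1-F_2$ becomes nonnegative on the boundary, three facts hold. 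We have $F_2-F_1\leq 0$. We have $F_1-(n-1)F_2\geq0$ by the pinching up to that time. And $(n-2)[\rho(1-\xi^{-1})-\rho'/(n-1)]\leq0$: for $m>1$ this is $(\ref{inequn_rhodt2})$, and for $m<1$ it is automatic since $1-\xi^{-1}<0$, $\rho\geq0$ and $\rho'\geq0$. Hence $\partial_{n_g}(F_1-F_2)\leq0$. To upgrade this to a strict sign we use the Hopf lemma. At $t=t^*$ we have $F_1-F_2\leq0$ with $F_1<F_2$ for $t<t^*$ in the interior, so the strong maximum principle and Hopf give $\partial_{n_g}(F_1-F_2)>0$ at any boundary point where equality $F_1=F_2$ is attained. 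That contradicts the nonpositive sign above, so $F_1<F_2$ at $(r_0,t^*)$. Continuity then extends $F_1<F_2$ a bit past $t^*$ on the boundary, and the interior maximum principle extends it to the interior.

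\emph{Step 2: $F_2<0$.} Since $F_1-F_2\leq0$, $(\ref{equn_envQ})$ gives $\partial_tF_2\leq\Delta_gF_2+2F_2(F_2+(n-1)(1-2\xi^{-1}))$, as in $(\ref{inequn_Qt})$. This is valid here because the extra term $2b^{-2}(F_1-F_2)+\frac{2}{n-2}(F_1-F_2)^2$ is nonpositive when $F_1-F_2\in[-(n-2)b^{-2},0]$. That bound follows from $F_1\geq(n-1)F_2$ together with $(\ref{equn_Pn-1Qb0})$. So it suffices to control the boundary. On the boundary we use $(\ref{equn_DtHb})$ with $\eta'\geq0$ and the analogue of the relation used in the Claim of Lemma \ref{lem_P_Q-0}. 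At a boundary maximum with $F_2=0$, Hopf gives $\partial_{n_g}F_2>0$. Then $(\ref{equn_DtHb})$ gives $\eta'=\eta(F_1+(n-1)(1-\xi^{-1}))-(n-1)\partial_{n_g}F_2<\eta(F_1+(n-1)(1-\xi^{-1}))$. Using $F_1<F_2=0$, this yields $\rho'+(n-1)\rho(\xi^{-1}-1)<0$, contradicting $(\ref{inequn_rhodt2})$ (or the sign analysis when $m<1$).

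\emph{Step 3: $F_1>(n-1)F_2$.} This follows exactly as at the end of Lemma \ref{lem_P_Q-0}. At the origin $F_1-(n-1)F_2=-(n-2)F_2>0$. By $(\ref{equn_Pn-1Qs})$ and Lemma \ref{lem_Hp}, $\partial_s(F_1-(n-1)F_2)=-2H_g(F_1-F_2)>0$ for $r>0$, so $F_1-(n-1)F_2$ is increasing in $s$. Finally, strictness of $F_1<F_2$ at $r=r_0$ follows from Hopf as in Step 1.

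The main obstacle is Step 1. Near $t=0$ the lemma needed the delicate expansion $(\ref{equn_nP-Qb})$ and condition $(\ref{inequn_rhodt0})$. For $t^*>0$ the strict interior inequalities are already available, which is why Hopf's lemma replaces that delicate analysis. The points to check carefully are the boundary-sign bookkeeping and the nondegeneracy needed for Hopf; the latter holds since the coefficients are smooth up to the boundary on compact time intervals.
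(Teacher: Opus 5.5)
\textbf{Verdict.} Step 2 has a genuine gap: it rests on a false derivation of the global inequality $(\ref{inequn_Qt})$. Steps 1 and 3 are sound. Step 1 is a valid variant of the paper's argument: you get the strict boundary inequality $F_1<F_2$ from the Hopf lemma, whereas the paper first proves $F_1-(n-1)F_2>0$ on $[0,r_0]$ and then uses that strict positivity in the second term of $(\ref{equn_nP-Qb})$. A minor point: the zeroth order coefficient of $(\ref{equn_P-Qevol})$ is not bounded, because of the $-nb^{-2}$ term. It is only bounded above, which is all the weak maximum principle needs.

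\textbf{Why the Step 2 bound fails.} The bound $F_1-F_2\geq-(n-2)b^{-2}$ does not follow from $F_1\geq(n-1)F_2$ and $(\ref{equn_Pn-1Qb0})$.
\begin{itemize}
\item Since $F_1-F_2=(n-2)(K-L)$ and $K-L+b^{-2}=-b^{-1}b_{ss}''+b^{-2}(b_s')^2$, the bound is equivalent to $b\,b_{ss}''\leq(b_s')^2$. That is concavity of $\ln b$ in $s$, i.e.\ a \emph{lower} bound on $K$.
\item By $(\ref{equn_Pn-1Qb0})$, the inequality $F_1\geq(n-1)F_2$ says only $L\leq-\xi(t)^{-1}$. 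Equivalently it gives $F_1-F_2\geq\frac{n-2}{n-1}F_1$, which is circular for your purpose.
\item None of the pinched inequalities bounds $K$ from below: $F_1\leq0$, $F_2\leq0$ and $F_1\leq F_2$ all bound $K$ from above.
\end{itemize}
So $(\ref{inequn_Qt})$ is not justified on all of $\overline M$, and your global maximum-principle argument for $F_2$ is unsupported. The paper uses $(\ref{inequn_Qt})$ only in a neighborhood of the origin. There $F_1-F_2=O(s^2)$ while $b^{-2}\sim s^{-2}$, so the extra term in $(\ref{equn_envQ})$ is indeed nonpositive.

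\textbf{The missing idea} is to use the pinching directly.
\begin{itemize}
\item If $F_2(r_1,t^*)=0$, then $(n-1)F_2\leq F_1\leq F_2$ forces $F_1=F_2=0$ at $r_1$. Your Step 1 then gives $r_1=0$.
\item The point $r_1=0$ is excluded by the strong maximum principle for $(\ref{inequn_Qt})$ near the origin. Use this together with $F_2<0$ for $0<r<r_0$ at times up to $t^*$.
\item Hence $F_2<0$ on the compact set $\overline M\times\{t^*\}$, and this extends a little past $t^*$ by continuity.
\end{itemize}
This makes your Hopf/boundary analysis in Step 2 unnecessary; your Step 2 boundary argument was also redundant, since $F_2(r_0,t)=0$ would force $F_1=F_2$ there, contradicting Step 1. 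With Step 2 replaced this way, your Step 3 (monotonicity of $F_1-(n-1)F_2$ in $s$, starting from $-(n-2)F_2(0,t)>0$) and the continuation past $t^*$ go through as you describe.
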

\begin{proof}
By Lemma \ref{lem_P_Q-0}, the inequality $(\ref{inequn_pin1})$ holds on $\overline{M}\times[0,\epsilon_0]$ for some $\epsilon_0\in(0,T)$. Now we define the function
\begin{align*}
\xi(t)\triangleq \sup_{\overline{M}}\max\{F_2(\cdot,t),\,(F_1-F_2)(\cdot,t),\,((n-1)F_2-F_1)(\cdot,t)\}
\end{align*}
for $t\geq0$. Then $\xi(t)$ is continuous for $t\in [0,T)$. 

Assume, to the contrary, that there exists $t_0\geq \epsilon$, such that $(\ref{inequn_pin1})$ holds on $\overline{M}\times[0,t_0]$, while for each $\delta>0$, there exists $\tilde{t}\in(t_0,t_0+\delta)$ such that
\begin{align}\label{inequn_xi1}
\xi(\tilde{t})>0.
\end{align}
Since $F_1(0,t)=F_2(0,t)$ for $t\geq0$, we obtain
\begin{align}
\xi(t)=0,
\end{align}
for $t\in[0,t_0]$. Since $F_1-F_2\leq 0$ on $\overline{M}\times[0,t_0]$, applying the strong maximum principle for $(\ref{equn_P-Qevol})$ or $(\ref{equn_P-Qevolct})$(see, for instance, Theorem 3 in Page 38 and Theorem 5 in Page 39 in \cite{Friedman}), we obtain
\begin{align}\label{inequn_P-Q-large}
(F_1-F_2)(r,t)<0
\end{align}
for $(r,t)\in(0,r_0)\times(0,t_0]$. If $F_2(r_1,t_0)=0$ for some $r_1\in[0,r_0]$, then by $(\ref{inequn_pin1})$ on $\overline{M}\times[0,t_0]$, we have \begin{align*}
F_1(r_1,t_0)=F_2(r_1,t_0)=(F_1-F_2)(r_1,t_0)=0,
\end{align*}
and hence by $(\ref{inequn_P-Q-large})$, $r_1=r_0$ or $r_1=0$.  Therefore,
\begin{align}\label{inequn_Q-large}
F_2(r,t_0)<0
\end{align}
for $r\in(0,r_0)$. Now if $F_2(0,t_0)=0$, since
\begin{align*}
(F_1-F_2)(0,t_0)=0,
\end{align*}
then by continuity of $(F_1-F_2)$, since $F_1-F_2\leq0$ on $\overline{M}\times[0,t_0]$, we have that $F_2$ satisfies the inequality $(\ref{inequn_Qt})$ in a neighborhood $U$ of the point $(r_0,t_0)$ in $\overline{M}\times[0,t_0]$. Since $F_2\leq 0$ on $U$ with $F_2(0,t_0)=0$, by the strong maximum principle for $(\ref{inequn_Qt})$, we have $F_2=0$ in $U$, contradicting with $(\ref{inequn_Q-large})$. Therefore,
\begin{align}\label{inequn_Q-large1}
F_2(r,t_0)<0
\end{align}
for $r\in[0,r_0)$, and hence, by $(\ref{equn_Pn-1Qs})$, $(\ref{inequn_P-Q-large})$ and Lemma \ref{lem_Hp}, we have
\begin{align*}
(F_1-(n-1)F_2)(r,t_0)\geq-(n-2)F_2(0,t_0)>0,
\end{align*}
for $r\in[0,r_0]$. The same discussion yields
\begin{align}\label{inequn_P_P-Q_Q-1}
(n-1)F_2<F_1\leq F_2<0
\end{align}
for $(r,t)\,\in\,[0,r_0)\times(0,t_0]$. Therefore, again, by $(\ref{inequn_P-Q-large})$, $(\ref{equn_Pn-1Qs})$ and Lemma \ref{lem_Hp}, we get
\begin{align}\label{inequn_Pn-1Q2}
F_1-(n-1)F_2>0
\end{align}
for $(r,t)\,\in\,[0,r_0]\times(0,t_0]$.

If $F_2(r_0,t)=0$ for some $t\in(0,t_0]$, then by $(\ref{inequn_pin1})$ on $\overline{M}\times[0,t_0]$, we have
\begin{align*}
F_1(r_0,t)=(F_1-F_2)(r_0,t)=(F_1-(n-1)F_2)(r_0,t)=0,
\end{align*}
contradicting with $(\ref{inequn_Pn-1Q2})$. Therefore, we have
\begin{align}\label{inequn_Q-2}
F_2<0
\end{align}
on $\overline{M}\times(0,t_0]$.

If $(F_1-F_2)(r_0,t)=0$ for some $t\in(0,t_0]$, then by $(\ref{inequn_P_P-Q_Q-1})$, we obtain
\begin{align*}
\frac{\partial}{\partial n_g}(F_1-F_2)(r_0,t)\geq0,
\end{align*}
contradicting with the first equality of $(\ref{equn_nP-Qb})$, by $(\ref{inequn_Pn-1Q2})$ and $(\ref{inequn_rhodt2})$.

In summary, we have that
\begin{align}
(n-1)F_2<F_1\leq F_2<0
\end{align}
on $\overline{M}\times(0,t_0]$, and moreover,
\begin{align}
F_1(r,t)<F_2(r,t)
\end{align}
for $(r,t)\in(0,r_0]\times(0,t_0]$. Therefore, by continuity, for $\sigma\in(0,r_0)$ small, there exists $\delta>0$, such that
\begin{align}
(n-1)F_2(r,t)<F_1(r,t)< F_2(r,t)<0
\end{align}
for $(r,t)\in [\sigma,r_0]\times[t_0,t_0+\delta]$, and also,
\begin{align}\label{inequn_Pn-1QQ1}
(n-1)F_2(r,t)-F_1(r,t)<0,\,\,\, F_2(r,t)<0,
\end{align}
for $(r,t)\in [0,r_0]\times[t_0,t_0+\delta]$. Thus,
\begin{align*}
-C+(n-1)(1-2\xi(t)^{-1})-nb^{-2}-\frac{2}{n-2}(F_1-(n-1)F_2)<0
\end{align*}
for $(r,t)\in(0,r_0]\times[0,t_0+\delta]$ and $C=2(n-1)\max\{0,m-1\}$. Recall that $(F_1-F_2)(0,t)=0$ for $t\geq0$. By the strong maximum principle for $(\ref{equn_P-Qevolct})$ on $(0,\sigma)\times[t_0,t_0+\delta]$, we obtain
\begin{align}
(F_1-F_2)(r,t)<0
\end{align}
for $(r,t)\in(0,\sigma)\times[t_0,t_0+\delta]$. Therefore,
\begin{align}\label{equn_P-Q2}
(F_1-F_2)(r,t)<0
\end{align}
for $(r,t)\in(0,r_0]\times[0,t_0+\delta]$.

By $(\ref{inequn_Pn-1QQ1})$, $(\ref{equn_P-Q2})$ and the fact $(F_1-F_2)(0,t)=0$ for $t\geq0$, we have
\begin{align*}
\xi(t)=0
\end{align*}
for $t\in[t_0,t_0+\delta]$, contradicting with $(\ref{inequn_xi1})$. Therefore, we obtain $(\ref{inequn_Q_P_Q1})$ on $\overline{M}\times(0,T)$, and $(\ref{inequn_P-Q-2})$ for $(r,t)\in(0,r_0]\times(0,T)$. This completes the proof of the theorem.

\end{proof}

We now turn to the proof of the long-time existence of the solution to the initial-boundary value problem $(\ref{equn_NRF1})-(\ref{equn_Ninc2m})$.
\begin{thm}\label{thm_existlongtime}
Let $n\geq3$ and $k\geq2$. Assume that $\eta=\eta(t)\in C^k([0,\infty))$ satisfies the condition in Theorem \ref{thm_main}. Then there exists a unique solution to $(\ref{equn_NRF1})-(\ref{equn_Ninc2m})$ on $\overline{M}\times[0,\infty)$, under the compatibility conditions $(\ref{equn_comptmk})$.
\end{thm}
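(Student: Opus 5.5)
The plan is to argue by contradiction. Let $T<\infty$ be the maximal existence time of the $C^{2k+\alpha,k+\frac{\alpha}{2}}$ solution given by Theorems 1.2 and 1.3 in \cite{Gi}. I will show that the metrics $g(t)$, $t\in[0,T)$, stay uniformly equivalent to $g(0)$ with uniformly bounded curvature. I will then upgrade this to uniform higher-order bounds up to $\partial M$, so that $g(t)$ converges as $t\to T$ to a metric $g(T)$ from which the flow can be restarted, contradicting maximality. Uniqueness follows from the uniqueness part of \cite{Gi}, applied via the transformation $(\ref{equn_vartransf})$.

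\emph{Step 1 (metric bounds).} By Theorem \ref{thm_PQn-1Q2}, $F_2<0$ and $F_1<0$ on $\overline{M}\times(0,T)$. Hence by $(\ref{equn_bintt0})$,
\begin{align*}
b(r,t)\geq b(r,0)(m^{-1}\xi(t))^{\frac12},
\end{align*}
and since $\xi(t)$ lies between $1$ and $m$, this gives in particular $b(r_0,t)\geq c>0$ for $t\in[0,T)$. Similarly $(\ref{equn_aintt0})$ gives $a\geq c>0$. Upper bounds for $a,b$ will come from Step 2, since $T<\infty$.

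\emph{Step 2 (curvature bounds; the key step).} I use $(\ref{equn_Pn-1Qb})$ on the boundary:
\begin{align*}
F_1-(n-1)F_2=-(n-1)(n-2)\Big[b^{-2}-\frac{\eta^2}{(n-1)^2}+\xi^{-1}\Big].
\end{align*}
Since $b(r_0,t)^{-2}$, $\eta(t)$ and $\xi(t)^{-1}$ are bounded on $[0,T)$, the right-hand side is bounded. Next, by $(\ref{equn_Pn-1Qs})$, Lemma \ref{lem_Hp} and $F_1\leq F_2$,
\begin{align*}
\partial_s(F_1-(n-1)F_2)=-2H_g(F_1-F_2)\geq0,
\end{align*}
so on $\overline{M}$ the quantity $F_1-(n-1)F_2$ is dominated by its boundary value, giving $0<F_1-(n-1)F_2\leq C$. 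Combined with $F_1\leq F_2$, this yields
\begin{align*}
-(n-2)F_2\leq F_1-(n-1)F_2\leq C,
\end{align*}
so $F_2$ is bounded below. Then $F_1\geq(n-1)F_2$ bounds $F_1$ below. Thus $K$ and $L$ are uniformly bounded on $\overline{M}\times[0,T)$. Via $(\ref{equn_aintt0})$ and $(\ref{equn_bintt0})$, this also gives upper bounds on $a$ and $b$, completing Step 1. The second fundamental form $\frac{\eta}{n-1}g^T$ of $\partial M$ is bounded as well.

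\emph{Step 3 (higher regularity and extension).} With uniformly equivalent metrics, bounded curvature and bounded umbilic boundary data, I obtain $C^{2k+\alpha,k+\frac{\alpha}{2}}$ bounds on $[T/2,T)$. First choice: work in the rotationally symmetric reduction, i.e.\ the parabolic system $(\ref{equn_NRFa})$--$(\ref{equn_NRFb})$ for $(a,b)$, or equivalently $(\ref{equn_envP})$--$(\ref{equn_envQ})$ for $(F_1,F_2)$ with the boundary relations $(\ref{equn_nPn-1Qb})$ and $(\ref{equn_DtHb})$. Apply interior Shi-type estimates away from $\partial M$, and boundary Schauder estimates for the oblique-derivative problem near $\partial M$. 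Near the origin I pass to Cartesian coordinates, where the flow is smooth, to avoid the polar singularity. If available, I would alternatively invoke the continuation criterion in \cite{Gi} for Ricci flow with this type of boundary condition under bounded curvature and bounded second fundamental form. Either way, $g(t)\to g(T)$ in $C^{2k+\alpha}(\overline{M})$. The compatibility conditions at $t=T$ are inherited from the smooth solution, so \cite{Gi} restarts the flow beyond $T$. This contradicts maximality of $T$.

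I expect Step 3 to be the main obstacle: obtaining uniform boundary estimates of high order (including the time-derivative compatibility) from only curvature bounds. Step 2, by contrast, is essentially forced by the pinching in Theorem \ref{thm_PQn-1Q2}.
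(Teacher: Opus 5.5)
\textbf{There is a genuine gap in Step 2}, exactly at the point you called ``essentially forced.'' The inequality $-(n-2)F_2\leq F_1-(n-1)F_2$ is backwards. Writing $F_1-(n-1)F_2=(F_1-F_2)-(n-2)F_2$ and using $F_1\leq F_2$ gives only $F_1-(n-1)F_2\leq -(n-2)F_2$.

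In fact the pinching $(n-1)F_2<F_1\leq F_2<0$, together with $F_1-(n-1)F_2\leq C$, cannot bound $F_2$ from below. For example, $F_2=-N$ and $F_1=-(n-1)N+c$ with $0<c\leq\min\{C,(n-2)N\}$ satisfy all of these for every large $N$. Geometrically, $F_1-(n-1)F_2=-(n-1)(n-2)(L+\xi^{-1})$ and $F_1-F_2=(n-2)(K-L)$. So you have shown that the tangential curvature $L$ is bounded, but the radial curvature $K\leq L$ is still uncontrolled from below. Your curvature bound therefore fails, and with it the upper bounds on $a,b$ and all of Step 3.

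\textbf{The missing ingredient} is a lower bound for $F_1-F_2$. It uses the time derivative of the boundary condition, $(\ref{equn_DtHb})$, and not just the condition itself. The paper argues as follows.

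By the maximum principle for $(\ref{equn_P-Qevolct})$, the negative minimum of $F_1-F_2$ over $\overline{M}\times[0,t_0]$ is attained at a boundary point $(r_0,t_1)$. This uses two facts: the zeroth-order coefficient is negative because $F_1-(n-1)F_2>0$ and $b^{-2}>0$, and $F_1-F_2=0$ at the origin. At that boundary point, $\partial_{n_g}(F_1-F_2)\leq 0$. Insert this into the first equality of $(\ref{equn_nP-Qb})$, which comes from $(\ref{equn_nPn-1Qb})$ and $(\ref{equn_DtHb})$, and divide by $\eta>0$:
\begin{align*}
(F_1-F_2)(r_0,t_1)\geq -\frac{1}{n-1}(F_1-(n-1)F_2)(r_0,t_1)+\frac{(n-2)\rho(t_1)}{\eta(t_1)}(1-\xi(t_1)^{-1})-\frac{(n-2)\rho'(t_1)}{(n-1)\eta(t_1)}.
\end{align*}
Your upper bound on $F_1-(n-1)F_2$ (the paper's $(\ref{inequn_Pn-1Qb2})$) then makes the right side bounded below, uniformly for $t_0<T$. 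Only after this does $-C\leq (n-1)F_2<F_1\leq F_2<0$ follow on $\overline{M}\times[0,T)$.

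\textbf{On Step 3}, the paper does not run boundary Schauder estimates by hand. It observes three facts: $|Rm_{g(t)}|$ is bounded, the second fundamental form $\frac{\eta}{n-1}g^T$ is bounded, and $g^T=b(r_0,t)^2g_{\mathbb{S}^{n-1}}$ with $b(r_0,t)$ bounded above and below. Hence $\partial M$ is $\Lambda$-controlled in the sense of \cite{Gi1}. Theorem 1.2 of \cite{Gi1} then gives the higher-order bounds up to the boundary, and the flow restarts via \cite{Gi}. This matches your ``alternative'' and resolves the obstacle you anticipated, but only once the curvature bound of Step 2 is actually in hand.
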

\begin{proof}
By combining the variable transformation $(\ref{equn_vartransf})$, Theorem 1.2 and Theorem 1.3 in \cite{Gi}, we conclude that there exists a unique solution $g(t)\in C^{2k+1,k+\frac{1}{2}}(\overline{M}\times[0,\epsilon_0])$ to $(\ref{equn_NRF1})-(\ref{equn_Ninc2m})$ for some $\epsilon_0>0$. Let $T$ be the maximal existence time of the solution $g(t)$.

Now assume $T<\infty$. For any $t_0\in(0,T)$, since $(\ref{inequn_pin1})$ holds on $\overline{M}\times[0,T)$, by the maximum principle for $(\ref{equn_P-Qevolct})$, we have that there exists $t_1\in(0,t_0]$ such that
\begin{align}\label{equn_P-Qbi}
(F_1-F_2)(r_0,t_1)=\inf_{\overline{M}\times[0,t_0]}(F_1-F_2)<0,
\end{align}
and hence
\begin{align}
\frac{\partial}{\partial n_g}(F_1-F_2)(r_0,t_1)\leq 0
\end{align}
where $n_g$ is the outer normal vector field of $\partial M$. Thus, by the first equality in $(\ref{equn_nP-Qb})$, we have
\begin{align}\label{inequn_P-Qb2}
0\geq &(F_2-F_1)(r_0,t_1)\eta(t_1)-\frac{\eta(t_1)}{n-1}(F_1-(n-1)F_2-(n-1)(n-2)(1-\xi(t_1)^{-1}))-\frac{n-2}{n-1}\eta'(t_1)\\
=&(F_2-F_1)(r_0,t_1)\eta(t_1)-\frac{\eta(t_1)}{n-1}(F_1-(n-1)F_2)+(n-2)\rho(t_1)(1-\xi(t_1)^{-1})-\frac{n-2}{n-1}\rho'(t_1).\nonumber
\end{align}
On the other hand, by $(\ref{equn_Pn-1Qs})$, $(\ref{inequn_P-Q-2})$ and Lemma \ref{lem_Hp}, we obtain that $(P-(n-1)Q)(r,t)>0$ is strictly increasing in $r$ for each $t>0$. The first equality in $(\ref{equn_Pn-1Qb1})$ yields
\begin{align}
&F_1(r_0,t)-(n-1)F_2(r_0,t)\nonumber\\
=&-(n-1)(n-2)[\xi(t)^{-1}+\bar{b}(t)^{-2}e^{2\int_0^tF_2(r_0,\tau)d\tau}]+\frac{n-2}{n-1}(H_{\bar{g}(t)}\big|_{\partial M}+\rho(t))^2\nonumber\\
\leq &\label{inequn_Pn-1Qb2}-(n-1)(n-2)\xi(t)^{-1}+\frac{n-2}{n-1}(H_{\bar{g}(t)}\big|_{\partial M}+\rho(t))^2
\end{align}
for $t\in(0,T)$, and hence,
\begin{align}\label{inequn_Pn-1Q3}
F_1(r,t)-(n-1)F_2(r,t)\leq -(n-1)(n-2)\xi(t)^{-1}+\frac{n-2}{n-1}(H_{\bar{g}(t)}\big|_{\partial M}+\rho(t))^2
\end{align}
for $(r,t)\in[0,r_0]\times(0,T)$. By $(\ref{inequn_P-Qb2})$ and $(\ref{inequn_Pn-1Qb2})$, we get
\begin{align}
&F_1(r_0,t_1)-F_2(r_0,t_1)\\
\geq &(n-2)\xi(t_1)^{-1}-\frac{n-2}{(n-1)^2}(H_{\bar{g}(t_1)}\big|_{\partial M}+\rho(t_1))^2+\frac{(n-2)\rho(t_1)}{\eta(t_1)}(1-\xi(t_1)^{-1})-\frac{n-2}{(n-1)\eta(t_1)}\rho'(t_1),\nonumber
\end{align}
and hence by $(\ref{equn_P-Qbi})$,
\begin{align}\label{inequn_P-Q3}
&F_1(r,t)-F_2(r,t)\\
\geq\,&(n-2)\xi(t_1)^{-1}-\frac{n-2}{(n-1)^2}(H_{\bar{g}(t_1)}\big|_{\partial M}+\rho(t_1))^2+\frac{(n-2)\rho(t_1)}{\eta(t_1)}(1-\xi(t_1)^{-1})-\frac{n-2}{(n-1)\eta(t_1)}\rho'(t_1),\nonumber
\end{align}
for $[0,r_0]\times[0,t_0]$. Combining $(\ref{inequn_Pn-1Q3})$ with $(\ref{inequn_P-Q3})$, we have
that
\begin{align}
-C_1\leq (n-1)F_2<F_1\leq F_2<0
\end{align}
for $(r,t)\in[0,r_0]\times(0,t_0]$, where $C_1>0$ is uniformly bounded when $t_0\in(0,T)$. Since $g(t)$ is rotationally symmetric for $t>0$, the second fundamental form $II_g$ of $\partial M$ satisfies
\begin{align}
II_g=\frac{\eta(t)}{n-1}g_{\mathbb{S}^{n-1}}.
\end{align}
In summary, we have that there exists $C=C(T)>0$ such that
\begin{align}
&\sup_{M\times[0,T)}|Rm_{g(t)}|_{g(t)}+\sup_{\partial M\times[0,T)}|II_{g(t)}|_{g(t)}\leq C.
\end{align}
By $(\ref{equn_metricpolar}),\,(\ref{equn_aintt0})$ and $(\ref{equn_bintt0})$, we obtain that $a(r,t)(m\xi(t)^{-1})^{\frac{1}{2}}$ and $b(r,t)(m\xi(t)^{-1})^{\frac{1}{2}}$ are increasing in $t$, and there exists a constant $C_2=C_2(T)>0$ such that
\begin{align}
b(r_0,0)\leq b(r_0,0)e^{-\int_0^tF_2(r_0,\tau)d\tau}= b(r_0,t)e^{(n-1)\int_0^t(1-\xi(\tau)^{-1})d\tau}=b(r_0,t)(m\xi(t)^{-1})^{\frac{1}{2}}\leq C_2
\end{align}
for $t\in[0,T)$. Notice that $g^T(t)=b(r_0,t)^2g_{\mathbb{S}^{n-1}}$. By taking $\gamma(t)=g_{\mathbb{S}^{n-1}}$ for $t\in[0,T)$, one has that the boundary $\partial M$ is $\Lambda$-controlled in $(0,t_0]$ for any $t_0\in(0,T)$ (see Definition 3.1 in \cite{Gi1}). Hence, by Theorem 1.2 in \cite{Gi1}, for any $j=0,1,..,2k-2$, there exists a constant $C_3=C_3(T,k)>0$ such that
\begin{align}
&|\nabla^jRm_{g(t)}|_{g(t)}\leq C_3,\,\,\,\text{in}\,\,\overline{M}\times [0,T),\\
&|\nabla^{j+1}II_{g(t)}|_{g(t)}\leq C_3,\,\,\,\text{on}\,\,\partial M\times [0,T).
\end{align}
 By $(\ref{equn_aintt0})$ and $(\ref{equn_bintt0})$, the solution $g(t)$ can be extended to time $T$, and hence, by Theorem 1.2 and Theorem 1.3 in \cite{Gi}, the solution $g(t)$ exists on $[0,T+\delta]$ for some $\delta>0$, contradicting with the choice of $T$. Therefore, the solution $g(t)$ exists for all time $t>0$. This completes the proof of the theorem.

\end{proof}

\vskip0.2cm

\section{Locally uniform convergence of the solution to $(\ref{equn_NRF1})-(\ref{equn_Ninc2m})$ in $M$}\label{section_4}

The discussion in this section is a extension of that presented in Section 4 in \cite{Li4}. Let $\overline{M}$ be of dimension $n\geq3$. In this section, we will show that both $a(r,t)$ and $b(r,t)$ in the decomposition $(\ref{equn_metricpolar1})$ and the curvatures terms $F_1(r,t)$ and $F_2(r,t)$ are uniformly bounded on $[0,r_1]\times[0,\infty)$ for any $r_1\in(0,r_0)$; moreover, as $t\to\infty$, these functions converge locally uniformly in $r$ on $[0,r_0)$.

First, we show that $a(r,t)$, $b(r,t)$ and the distance function $s=\int_0^ra(x,t)dx$ are uniformly bounded on $[0,r_1]\times[0,\infty)$ for any $r_1\in(0,r_0)$.
\begin{lem}\label{lem_absBD}
Let $n\geq3$ and $k\geq2$. Under the assumption in Theorem \ref{thm_main}, for each $r_1\in(0,r_0)$, there exists a constant $C=C(m,r_1)>0$ such that
\begin{align}
&\label{inequn_LBa}a(r,0)\leq a(r,t)(m\xi(t)^{-1})^{\frac{1}{2}}\leq C,\\
&\label{inequn_LBb}b(r,0)\leq b(r,t)(m\xi(t)^{-1})^{\frac{1}{2}}\leq Cr,\\
&\label{inequn_LBs}s(r,0)\leq s(r,t)(m\xi(t)^{-1})^{\frac{1}{2}}\leq Cr,
\end{align}
for $(r,t)\in[0,r_1]\times[0,\infty)$.
\end{lem}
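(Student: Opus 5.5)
The lower bounds are immediate from Theorem \ref{thm_PQn-1Q2}. By $(\ref{equn_aintt0})$ and $(\ref{equn_bintt0})$,
\[
a(r,t)(m\xi(t)^{-1})^{\frac12}=a(r,0)e^{-\int_0^tF_1d\tau},\qquad
b(r,t)(m\xi(t)^{-1})^{\frac12}=b(r,0)e^{-\int_0^tF_2d\tau}.
\]
Since $F_1\le F_2<0$, both quantities are nondecreasing in $t$ and bounded below by their initial values. The lower bound for $s$ then follows by integrating the bound for $a$ in $r$, using $(\ref{equn_saformul})$.

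The upper bounds are the real content. I would first bound $b$ and then deduce bounds on $a$ and $s$.

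\emph{Upper bound for $b$.} Since $F_2\le0$, the function $\tilde b:=b(m\xi^{-1})^{1/2}$ is nondecreasing in $t$. It suffices to bound $\tilde b(r_1,t)$ uniformly in $t$ for each $r_1<r_0$, since Lemma \ref{lem_Hp} gives $b_r'>0$. I would compare with a hyperbolic ball. The curvature bounds give $K=F/(n-1)\le-\xi(t)^{-1}$ and $L\le -\xi(t)^{-1}$ (from $F_1\le F_2\le0$), so sectional curvatures are $\le -\xi^{-1}\le -c<0$ uniformly in $t$. Consequently $H_g(r,t)=(n-1)b^{-1}b_s'$ is increasing in $s$ by the Riccati comparison. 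More precisely, $b_{ss}''=-Kb\ge \xi^{-1}b$ gives $b\ge \sqrt{\xi}\sinh(s/\sqrt\xi)$ and $b_s'\ge\cosh(s/\sqrt{\xi})$. The boundary data $\eta(t)$ is bounded above, because $\rho$ stays bounded or at least increases, so $H_g(r_0,t)=\eta(t)\ge H_g(r,t)$.

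The key trick is the identity $L=b^{-2}(1-(b_s')^2)\le -\xi^{-1}$, which rearranges to $H_g^2/(n-1)^2\ge b^{-2}+\xi^{-1}$. This only bounds $H$ from below. For an upper bound on $b$, I would instead integrate $\partial_s\log b=H_g/(n-1)\ge \xi^{-1/2}\coth(s\xi^{-1/2})$ from $r_1$ to $r_0$. This gives
\[
b(r_0,t)\ge b(r_1,t)\,e^{c\,(s(r_0,t)-s(r_1,t))}.
\]
Meanwhile $\tilde b(r_0,t)$ is controlled by $(\ref{equn_Pn-1Qb})$ on $\partial M$: because $F_1-(n-1)F_2>0$, we get $b(r_0,t)^{-2}\le \eta^2/(n-1)^2-\xi^{-1}$. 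This is a lower bound on $b(r_0,t)$, which again goes the wrong way.

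So I expect the genuine obstacle to be the upper bound on $b(r_1,t)$ for interior $r_1$. My plan is the following. The Laplacian-type monotonicity $(\ref{equn_Pn-1Qs})$ says $F_1-(n-1)F_2$ is increasing in $r$ and positive, and together with $(\ref{inequn_Pn-1Q3})$ this gives a uniform two-sided bound on the curvatures $F_1,F_2$ in terms of bounded boundary data. With bounded curvature, I would use $b_r'/b$ together with the ODE $\partial_t\log(b_r'/(a b))=\dots$ to show that $\int_0^\infty(-F_2)(r_1,\tau)d\tau<\infty$. To get this I would exploit that $-F_2(r,t)$ is dominated near the center. Since $F_2$ satisfies $(\ref{inequn_Qt})$-type inequalities with the negative drift $2F_2\cdot(n-1)(1-2\xi^{-1})$, I would build an interior barrier on $\overline{B_{r_1'}}(0)$ with $r_1<r_1'<r_0$. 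Because $1-2\xi^{-1}\to-1$, the reaction coefficient is eventually $\approx -2(n-1)$, which forces exponential decay $|F_2(r,t)|\le Ce^{-\delta t}\phi(r)$ once the boundary values on $\partial B_{r_1'}$ are bounded. The main step is then checking that the barrier $\phi$, which blows up at $r_1'$, dominates there. If $\int_0^\infty|F_2|<\infty$ on $[0,r_1]$, then $\tilde b\le Cr$ follows from $b(r,0)\le \sqrt m\sinh r_0\, r/r_0$.

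\emph{Bounds for $a$ and $s$.} Since $|F_1|\le (n-1)|F_2|$ by the pinching, the same integrability gives the bound on $a$, and integrating $a$ over $[0,r]$ gives the bound on $s$ by $Cr$.
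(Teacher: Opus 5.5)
\textbf{Verdict: genuine gap.} Your lower bounds are correct, and so is your last step: a bound on $\int_0^\infty(-F_2)(r,\tau)\,d\tau$ uniform in $r\in[0,r_1]$, together with $|F_1|<(n-1)|F_2|$, would give all three upper bounds. But that integrability is the whole content of the lemma, and your route to it breaks down at three points.

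First, there is no uniform curvature bound at this stage. Nothing in Theorem \ref{thm_main} bounds $\eta$, since $\rho$ is only nondecreasing. Even for bounded $\eta$, $(\ref{inequn_Pn-1Q3})$ together with $F_1-(n-1)F_2>0$ only traps the combination $F_1-(n-1)F_2$. That is compatible with $F_1,F_2\to-\infty$ and $F_1\approx(n-1)F_2$. The paper gets interior curvature bounds only later, in Theorem \ref{thm_QPn-1Qunibd2}, and it uses this lemma to do so.

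Second, $(\ref{inequn_Qt})$ points the wrong way. It holds only where $F_2-F_1\le(n-2)b^{-2}$, and there it makes $F_2$ a subsolution. So it can only give upper bounds on $F_2$; this is how the paper gets $F_2<0$. Decay needs an upper bound on $-F_2$. By $(\ref{equn_envQ})$, $-F_2$ carries the source term $2(F_2-F_1)\bigl(b^{-2}-\frac{F_2-F_1}{n-2}\bigr)$, which is nonnegative exactly where $(\ref{inequn_Qt})$ holds.

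Third, a barrier on $B_{r_1'}(0)$ that knows nothing about the values on $\partial B_{r_1'}(0)$ cannot give time-integrability:
\begin{itemize}
\item Bounded but non-decaying boundary values do not force interior decay for a damped equation; the comparison solution tends to a positive steady state.
\item A supersolution $Ce^{-\delta t}\phi$ with $\phi$ infinite on $\partial B_{r_1'}(0)$ cannot exist for a linear damping term. It would have to dominate $k$ times the solution with boundary value $1$, for every $k$.
\item With the quadratic absorption that $W=-(F_1+(n-1)F_2)$ does have by $(\ref{equn_Pn-1Qt})$, boundary blow-up barriers lie above the positive stationary large solution. So they bound $W$ but never give $\int_0^\infty W\,d\tau<\infty$.
\end{itemize}
In addition, $\Delta_{g(t)}$ written in $r$ involves $a$, the very quantity you want to bound. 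Also, $H_g$ is not increasing in $s$: $H_g\sim(n-1)/s$ near the origin.

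\textbf{The missing idea.} You need neither curvature bounds nor time integration. The paper argues at each fixed $t$ with an ODE in $r$, using only the signs from Theorem \ref{thm_PQn-1Q2}.

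Write $a(r,t)(m\xi^{-1})^{1/2}=a(r,0)A$ and $b(r,t)(m\xi^{-1})^{1/2}=b(r,0)B$, with $A=e^{-\int_0^tF_1d\tau}$ and $B=e^{-\int_0^tF_2d\tau}$.
\begin{itemize}
\item From $F_1<0$ one gets $b\ge\xi^{1/2}\sinh(\xi^{-1/2}s)$, which you had.
\item From $F_1\le F_2$ one gets $A\ge B$, i.e.\ $a(r,t)\ge\frac{a(r,0)}{b(r,0)}\,b(r,t)$. You never exploited this comparison.
\end{itemize}
Combined, these give $\partial_rs\ge\xi^{1/2}\sinh(\xi^{-1/2}s)/\sinh r$, a superlinear ODE in $r$. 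Integrate from $r$ to $r_0$, using only $\tanh<1$ at $r_0$. This gives $\tanh\bigl(\tfrac12\xi^{-1/2}s(r,t)\bigr)\le\tanh(r/2)/\tanh(r_0/2)$, which is $(\ref{inequn_supbd2})$. Hence $s(r,t)\le Cr$ on $[0,r_1]$, uniformly in $t$ and independently of $\eta$.

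Next, $(n-1)F_2<F_1$ gives $A\le B^{n-1}$. Together with $b_r'>0$ this yields $(\ref{inequn_a2a1})$, which converts the bound on $s$ into a bound on $a$. Finally, $B\le A$ bounds $b$.
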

\begin{proof}
It follows from Theorem \ref{thm_PQn-1Q2} and Theorem \ref{thm_existlongtime} that $(\ref{inequn_Q_P_Q1})$ holds on $\overline{M}\times(0,\infty)$, and $(\ref{inequn_P-Q-2})$ holds for $(r,t)\in(0,r_0]\times(0,\infty)$. Thus, using the monotonicity of both $(m\xi(t)^{-1})^{\frac{1}{2}}a$ and $(m\xi(t)^{-1})^{\frac{1}{2}}b$ in $t$, and by $(\ref{equn_saformul})$, we have that the first inequality in each of the three chains $(\ref{inequn_LBa})-(\ref{inequn_LBs})$ holds.

Since $F_1(r,t)<0$ for $t>0$, by definition,
\begin{align*}
&b_{ss}''-\xi(t)^{-1}b\geq0,
\end{align*}
and hence,
\begin{align}
(e^{\xi(t)^{-\frac{1}{2}}s}b_s')_s'-(\xi(t)^{-\frac{1}{2}}e^{\xi(t)^{-\frac{1}{2}}s}b)_s'\geq0,
\end{align}
for $(r,t)\in[0,r_0]\times[0,\infty)$. Here $s=\int_0^ra(x,t)dx$, for $t\geq0$. Integrating this inequality with respect to $s$ and by the fact $b_s'(0,t)=1$ for any $t\geq0$, we obtain
\begin{align}\label{inequn_bsb1}
b_s'-\xi(t)^{-\frac{1}{2}}b-e^{-\xi(t)^{-\frac{1}{2}}s}\geq0
\end{align}
on $\overline{M}\times[0,\infty)$, and hence by the fact $b(0,t)=0$ for $t\geq0$,
\begin{align}\label{inequn_bsinhslbd}
b\geq \xi(t)^{\frac{1}{2}}\sinh(\xi(t)^{-\frac{1}{2}}s)
\end{align}
on $\overline{M}$ for any $t\geq0$. Thus, by $(\ref{inequn_bsb1})$,
\begin{align}
b_s'\geq \cosh(\xi(t)^{-\frac{1}{2}}s)
\end{align}
on $\overline{M}$ for any $t\geq0$.

On the other hand, by $(\ref{equn_aintt0})$, $(\ref{equn_bintt0})$ and $(\ref{inequn_Q_P_Q1})$,
\begin{align}\label{inequn_bn-1ab2}
(n-1)\ln(\frac{b(r,t)m^{\frac{1}{2}}}{b(r,0)\xi(t)^{\frac{1}{2}}})&=-(n-1)\int_0^tF_2(r,\tau)d\tau>-\int_0^tF_1(r,\tau)d\tau
= \ln(\frac{a(r,t)m^{\frac{1}{2}}}{a(r,0)\xi(t)^{\frac{1}{2}}})\\
&\geq -\int_0^tF_2(r,\tau)d\tau =\ln(\frac{b(r,t)m^{\frac{1}{2}}}{b(r,0)\xi(t)^{\frac{1}{2}}}) \nonumber
\end{align}
for $(r,t)\in[0,r_0]\times(0,\infty)$, and hence, by Lemma \ref{lem_Hp}, for any $0\leq r_1<r_2\leq r_0$,
\begin{align*}
\ln(\frac{a(r_2,t)m^{\frac{1}{2}}}{a(r_2,0)\xi(t)^{\frac{1}{2}}})\geq \ln(\frac{b(r_2,t)m^{\frac{1}{2}}}{b(r_2,0)\xi(t)^{\frac{1}{2}}})&\geq \ln(\frac{b(r_1,t)m^{\frac{1}{2}}}{b(r_2,0)\xi(t)^{\frac{1}{2}}})
=\ln(\frac{b(r_1,t)m^{\frac{1}{2}}}{b(r_1,0)\xi(t)^{\frac{1}{2}}})+\ln(\frac{b(r_1,0)}{b(r_2,0)})\\
&\geq \frac{1}{n-1}\ln(\frac{a(r_1,t)m^{\frac{1}{2}}}{a(r_1,0)\xi(t)^{\frac{1}{2}}})+\ln(\frac{b(r_1,0)}{b(r_2,0)})
\end{align*}
for $t>0$. Therefore,
\begin{align}\label{inequn_a2a1}
\frac{a(r_2,t)m^{\frac{1}{2}}}{a(r_2,0)\xi(t)^{\frac{1}{2}}}\geq\,\frac{b(r_1,0)}{b(r_2,0)}\,\big(\frac{a(r_1,t)m^{\frac{1}{2}}}{a(r_1,0)\xi(t)^{\frac{1}{2}}}\big)^{\frac{1}{n-1}}
\end{align}
for any $0\leq r_1<r_2\leq r_0$ and $t>0$.

Now, by $(\ref{inequn_bn-1ab2})$ and $(\ref{inequn_bsinhslbd})$,
\begin{align*}
\frac{\partial s}{\partial r}=a(r,t)\geq \frac{a(r,0)}{b(r,0)} b(r,t) \geq \frac{a(r,0)}{b(r,0)} \xi(t)^{\frac{1}{2}}\sinh(\xi(t)^{-\frac{1}{2}}s),
\end{align*}
and hence,
\begin{align}\label{inequn_sabr1}
\frac{ds}{\xi(t)^{\frac{1}{2}}\sinh(\xi(t)^{-\frac{1}{2}}s)}\geq \frac{a(r,0)}{b(r,0)} dr
\end{align}
for $t>0$. For any $0< r_1\leq r_2\leq r_0$, integrating $(\ref{inequn_sabr1})$ over $r\in[r_1,r_2]$ yields
\begin{align}
\ln(\frac{e^{\xi(t)^{-\frac{1}{2}}s(r_2,t)}-1}{e^{\xi(t)^{-\frac{1}{2}}s(r_2,t)}+1})-\ln(\frac{e^{\xi(t)^{-\frac{1}{2}}s(r_1,t)}-1}{e^{\xi(t)^{-\frac{1}{2}}s(r_1,t)}+1})\geq \int_{r_1}^{r_2}a(r,0)b^{-1}(r,0)dr
\end{align}
for $t>0$. Thus,
\begin{align}
1>\frac{e^{\xi(t)^{-\frac{1}{2}}s(r_2,t)}-1}{e^{\xi(t)^{-\frac{1}{2}}s(r_2,t)}+1}&\geq\,\frac{e^{\xi(t)^{-\frac{1}{2}}s(r_1,t)}-1}{e^{\xi(t)^{-\frac{1}{2}}s(r_1,t)}+1}\,e^{\int_{r_1}^{r_2}a(r,0)b^{-1}(r,0)dr}\\
&=\,\frac{e^{\xi(t)^{-\frac{1}{2}}s(r_1,t)}-1}{e^{\xi(t)^{-\frac{1}{2}}s(r_1,t)}+1}\,\frac{e^{r_2}-1}{e^{r_2}+1}\,\frac{e^{r_1}+1}{e^{r_1}-1}
\end{align}
for $t>0$. Now take $r_2=r_0$, and hence
\begin{align}
&\frac{e^{\xi(t)^{-\frac{1}{2}}s(r,t)}-1}{e^{\xi(t)^{-\frac{1}{2}}s(r,t)}+1}<\frac{e^{r_0}+1}{e^{r_0}-1}\,\frac{e^{r}-1}{e^{r}+1},\\
&\label{inequn_supbd2}s(r,t)<\xi(t)^{\frac{1}{2}}\ln(\frac{e^{r_0+r}-1}{e^{r_0}-e^r})<\infty,
\end{align}
for any $(r,t)\in(0,r_0)\times(0,\infty)$, which proves the second inequality in $(\ref{inequn_LBs})$. Thus,
\begin{align*}
a(0,t)=\lim_{r\to0_+}\frac{s(r,t)}{r}\leq\frac{e^{r_0}+1}{e^{r_0}-1}\xi(t)^{\frac{1}{2}},
\end{align*}
for any $t>0$.

Next, we show that for any $r_1\in(0,r_0)$, $a(r,t)$ and $b(r,t)$ are uniformly bounded on $[0,r_1]\times[0,\infty)$. Assume for contradiction that there exists a sequence of points $\{(\bar{r}_j,t_j)\}_{j=1}^\infty$ such that $a(\bar{r}_j,t_j)\to\infty$, $0\leq \bar{r}_j\leq r_1$ and $t_j\to\infty$. Then for any $r\in[r_1,\frac{r_1+r_0}{2}]$, taking $r_1=\bar{r}_j$ and $r_2=r$ in $(\ref{inequn_a2a1})$, one gets
\begin{align}
a(r,t_j)\geq\,\frac{\xi(t)^{\frac{1}{2}}a(r,0)b(\bar{r}_j,0)}{m^{\frac{1}{2}}b(r,0)}\,\big(\frac{m^{\frac{1}{2}}a(\bar{r}_j,t_j)}{\xi(t)^{\frac{1}{2}}a(\bar{r}_j,0)}\big)^{\frac{1}{n-1}},
\end{align}
for $r\in[r_1,\frac{r_1+r_0}{2}]$, and hence,
\begin{align}
s(\frac{r_1+r_0}{2},t_j)-s(r_1,t_j)=\int_{r_1}^{\frac{r_1+r_0}{2}}a(r,t_j)dr\to\infty
\end{align}
as $t_j\to\infty$, contradicting to $(\ref{inequn_supbd2})$. Therefore, there exists $C=C(r_1)>0$ such that
\begin{align}
a(r,t)\leq C
\end{align}
for $(r,t)\in[0,r_1]\times[0,\infty)$, and hence, by $(\ref{inequn_bn-1ab2})$,
\begin{align}
b(r,t)\leq \frac{C}{a(r,0)}b(r,0)=C\sinh(r)
\end{align}
for $(r,t)\in[0,r_1]\times[0,\infty)$. This completes the proof of the Theorem.

\end{proof}

Next, for each $r_1\in(0,r_0)$, we show that $F_1(r,t)$ and $F_2(r,t)$ are uniformly bounded on $[0,r_1]\times[0,\infty)$,and moreover, $F_1(r,t)$ and $F_2(r,t)$ converge to $0$ uniformly for $r\in[0,r_1]$ as $t\to\infty$.

\begin{thm}\label{thm_QPn-1Qunibd2}
Let $n\geq3$ and $k\geq2$. Assume that $\eta=\eta(t)\in C^k([0,\infty))$ satisfies the condition in Theorem \ref{thm_main}. Assume the compatibility conditions $(\ref{equn_comptmk})$ holds. Then for any $r_1\in(0,r_0)$, there exists a constant $C=C(r_1)>0$ such that
\begin{align}\label{inequn_Qlb2}
-C\leq (n-1)F_2<F_1\leq F_2<0
\end{align}
on $[0,r_1]\times(0,\infty)$. Moreover, for each $j\in \mathbb{N}$ and $r_1\in(0,r_0)$, there exists $C=C(n,j,r_1)>0$, such that
\begin{align*}
|\nabla^jRm_{g(t)}|_{g(t)}\leq C
\end{align*}
on $\overline{B_{r_1}}(0)\times[0,\infty)$.
\end{thm}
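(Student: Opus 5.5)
The plan is to get the lower bound in $(\ref{inequn_Qlb2})$ from a local bound on the quantity $F_1-(n-1)F_2$, which Theorem \ref{thm_PQn-1Q2} already shows is positive and increasing in $r$. The upper chain $(n-1)F_2<F_1\leq F_2<0$ on $\overline{M}\times(0,\infty)$ is already given by Theorem \ref{thm_PQn-1Q2} together with Theorem \ref{thm_existlongtime}. For the lower bound, note that $F_1\leq F_2$ gives
\begin{align*}
F_1-(n-1)F_2\geq F_2-(n-1)F_2=-(n-2)F_2>0 .
\end{align*}
Thus a uniform upper bound for $F_1-(n-1)F_2$ on $[0,r_1]\times(0,\infty)$ gives $-C\leq F_2$, and hence $-C\leq(n-1)F_2$. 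By $(\ref{equn_Pn-1Qs})$, $(\ref{inequn_P-Q-2})$ and Lemma \ref{lem_Hp}, $F_1-(n-1)F_2$ is increasing in $r$. So it suffices to bound it at the single radius $r=r_1$. There we use $(\ref{equn_Pn-1Qb0})$:
\begin{align*}
F_1-(n-1)F_2=-(n-1)(n-2)\big[b^{-2}(1-(b_s')^2)+\xi(t)^{-1}\big]\leq (n-1)(n-2)\,b^{-2}(b_s')^2 .
\end{align*}
It therefore remains to bound $b_s'(r_1,t)$ from above and $b(r_1,t)$ from below, uniformly in $t$.

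The lower bound for $b$ is immediate from Lemma \ref{lem_absBD}: $b(r_1,t)\geq (m^{-1}\xi(t))^{\frac12}b(r_1,0)\geq c(m,r_1)>0$, since $\xi(t)\geq\min\{1,m\}$. For $b_s'$ I use convexity. Since $F_1<0$, we have $b_{ss}''>0$, so $b_s'$ is increasing in $s$. Fix $r_2=\frac{r_1+r_0}{2}$. Then
\begin{align*}
b(r_2,t)-b(r_1,t)\geq b_s'(r_1,t)\,\big(s(r_2,t)-s(r_1,t)\big).
\end{align*}
Lemma \ref{lem_absBD} applied with $r_2<r_0$ gives $b(r_2,t)\leq C(r_2)$, using that $\xi(t)\leq\max\{1,m\}$. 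The monotonicity of $(m\xi(t)^{-1})^{\frac12}a$ in $t$ gives
\begin{align*}
s(r_2,t)-s(r_1,t)\geq (m^{-1}\xi(t))^{\frac12}\int_{r_1}^{r_2}a(r,0)\,dr\geq c>0 .
\end{align*}
Hence $b_s'(r_1,t)\leq C$. This yields $F_1-(n-1)F_2\leq C(r_1)$ at $r_1$, and by monotonicity on all of $[0,r_1]\times(0,\infty)$, which proves $(\ref{inequn_Qlb2})$. Since $K=F/(n-1)$ and $L$ are controlled by $F_1,F_2$ and $\xi^{-1}$, this also gives $|Rm_{g(t)}|\leq C(r_2)$ on $\overline{B_{r_2}}(0)\times[0,\infty)$, where $r_2$ is any radius less than $r_0$.

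For the higher derivative bounds I would apply Shi's local derivative estimates, either transported through $(\ref{equn_vartransf})$ or directly for $(\ref{equn_NRF1})$, where the extra term $-2(n-1)g$ is harmless. By the above, $g(t)$-distance from $\partial B_{r_1}(0)$ to $\partial B_{r_2}(0)$ is at least $c>0$. So for each point of $\overline{B_{r_1}}(0)$, the $g(t)$-ball of radius $c$ about it stays inside $B_{r_2}(0)$, where the curvature is uniformly bounded. On windows $[t-1,t]$ with $t\geq1$, Shi's estimates then give $|\nabla^jRm|\leq C(n,j,r_1)$. For $t\in[0,1]$ I would use the local version of Shi's estimate with smooth initial data, since $mg_{-1}$ has all covariant derivatives of $Rm$ equal to zero. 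Alternatively, on this finite time interval one can use the interior bounds already available from the proof of Theorem \ref{thm_existlongtime}.

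The main obstacle is the uniform-in-time control of $b_s'$ at an interior radius, i.e.\ of the mean curvature of interior spheres. The convexity argument above relies on the upper bound for $b$ at $r_2<r_0$ in Lemma \ref{lem_absBD}, which degenerates as $r_2\to r_0$. This is exactly why the constant depends on $r_1$.
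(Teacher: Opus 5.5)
There is a genuine gap at the step meant to produce the lower bound. Since $F_1\leq F_2$, you have $F_1-(n-1)F_2=(F_1-F_2)-(n-2)F_2\leq-(n-2)F_2$, not $\geq$. The inequality you wrote would need $F_1\geq F_2$. So an upper bound on $F_1-(n-1)F_2$ gives no lower bound on $F_2$. For instance, $F_2=-\lambda$, $F_1=-(n-1)\lambda+1$ satisfies $(n-1)F_2<F_1\leq F_2<0$ for every $\lambda\geq\frac{1}{n-2}$, with $F_1-(n-1)F_2=1$, while both tend to $-\infty$ as $\lambda\to\infty$.

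Geometrically, $F_1-(n-1)F_2=-(n-1)(n-2)(L+\xi(t)^{-1})$ only involves the tangential curvature $L=b^{-2}(1-(b_s')^2)$, which is first order in $b$. A lower bound on $F_1$, by contrast, is an upper bound on $b_{ss}''$ (the radial curvature $K=-b^{-1}b_{ss}''$), and your bounds on $b$ and $b_s'$ do not control it. Your first half is correct: bounding $F_1-(n-1)F_2$ at $r_1$ via convexity of $b$ in $s$, Lemma \ref{lem_absBD}, and monotonicity in $r$ is a direct version of the paper's contradiction argument. But the bound $|Rm_{g(t)}|\leq C$ you then claim, and hence the Shi step, rests on the missing lower bound.

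What is missing is a way to turn the time-integrated control $\int_0^t(-F_1)(r,\tau)\,d\tau=\ln\big(a(r,t)(m\xi(t)^{-1})^{\frac12}/a(r,0)\big)\leq C$ from Lemma \ref{lem_absBD} into a pointwise bound, and this requires the evolution equations. The paper proceeds as follows.
\begin{enumerate}
\item It uses the maximum principle for $(\ref{equn_Pn-1Qt2})$ to reduce to a lower bound for $F_1+(n-1)F_2$ at the single radius $r=r_1$.
\item Suppose $(F_1+(n-1)F_2)(r_1,t_j)\to-\infty$. Then $F_2(r_1,t_j)\to-\infty$, because $F_1>(n-1)F_2$. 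The boundedness of $F_1-(n-1)F_2$ (your first half) then forces $(F_1-F_2)(r_1,t_j)\to-\infty$.
\item The linear equation $(\ref{equn_P-Qt2})$ for $F_1-F_2$ has bounded coefficients near $r_1$ once $|\partial_ra|$ is bounded there. The paper gets this bound by integrating in $t$ the identity $\partial_t(a^{-1}a_r'-(n-1)b^{-1}b_r')=2(n-1)ab^{-1}b_s'(F_1-F_2)$, cf. $(\ref{equn_arbr})$. It uses $F_1<F_2<0$ and the boundedness of $\int_0^t(-F_1)$ and $\int_0^t(-F_2)$.
\item A parabolic Harnack inequality then makes $-F_1\geq F_2-F_1$ large on space-time cylinders of fixed size near infinitely many of the $t_j$.
\item Hence $\int_0^t(-F_1)(r_1,\tau)\,d\tau$ is unbounded, contradicting Lemma \ref{lem_absBD}.
\end{enumerate}
Without a parabolic argument of this kind, the lower bound in $(\ref{inequn_Qlb2})$ does not follow from the pinching inequalities.
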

\begin{proof}
We first show that $F_1-(n-1)F_2$ is uniformly bounded from above on $[0,r_1]\times[0,\infty)$ for any $r_1\in(0,r_0)$.

Assume, to the contrary, that there exists a sequence$\{\bar{r}_j\}_{j=1}^\infty$ on $[0,r_1]$ and $\{t_j\}_{j=1}^\infty\subseteq [0,\infty)$ with $t_j\to\infty$ as $j\to\infty$ such that
\begin{align*}
(F_1-(n-1)F_2)(\bar{r}_j,t_j)\to \infty
\end{align*}
as $j\to\infty$. Since $F_1-(n-1)F_2$ is increasing in $r$ on $[0,r_0]$ for any $t>0$, we obtain
\begin{align}\label{equn_Pn-1Qblup}
(F_1-(n-1)F_2)(r,t_j)=-(n-1)(n-2)[b(r,t_j)^{-2}(1-(b_s'(r,t_j))^2)+\xi(t_j)^{-1}]\to +\infty
\end{align}
uniformly in $r$ on $[r_1,\frac{r_1+r_0}{2}]$ as $j\to\infty$. Since $b(r,t)$ is uniformly bounded from below on $[r_1,\frac{r_1+r_0}{2}]\times[0,\infty)$ by Lemma \ref{lem_absBD}, we obtain
\begin{align*}
b_s'(r,t_j)\to \infty
\end{align*}
uniformly in $r$ on $[r_1,\frac{r_1+r_0}{2}]$, as $j\to\infty$. Therefore, by inequality $(\ref{inequn_LBs})$ on $[0,\frac{r_1+r_0}{2}]\times[0,\infty)$,
\begin{align*}
b(\frac{r_1+r_0}{2},t_j)-b(r_1,t_j)=\int_{s(r_1,t_j)}^{s(\frac{r_1+r_0}{2},t_j)}b_s'(r,t_j)ds\to\infty,
\end{align*}
as $j\to\infty$, contradicting with Lemma \ref{lem_absBD}. 
 Thus, for any $r_1\in(0,r_0)$, there exists $C_1=C_1(r_1)>0$ such that
 \begin{align}
 &0\leq (F_1-(n-1)F_2)(r,t)\leq C_1,\\
 &\label{inequn_b'supbd}b_s'(r,t)\leq C_1,
 \end{align}
for $(r,t)\in[0,r_1]\times[0,\infty)$, where for $(\ref{inequn_b'supbd})$ we have used the equality in $(\ref{equn_Pn-1Qblup})$ again. By the definition of $F_1$, since $F_1<0$ on $\overline{M}\times(0,\infty)$ and $b_s'(0,t)=1$ for $t\geq0$, we obtain that $b_s'$ is increasing in $s$ and
\begin{align}
1\leq b_s'(r,t)\leq C_1
\end{align}
for $(r,t)\in[0,r_1]\times[0,\infty)$.

Next, we prove that for any $r_1\in(0,r_0)$, $F_1(r,t)$ and $F_2(r,t)$ are uniformly bounded from below on $[0,r_1]\times[0,\infty)$.

By $(\ref{equn_envP})$ and $(\ref{equn_envQ})$, we obtain
\begin{align}\label{equn_Pn-1Qt}
\partial_t(F_1+(n-1)F_2)=\Delta_g(F_1+(n-1)F_2)+2(1-n)(2\xi(t)^{-1}-1)(F_1+(n-1)F_2)+2(F_1^2+(n-1)F_2^2)
\end{align}
on $\overline{M}\times[0,\infty)$, where
\begin{align*}
\frac{2}{n}(F_1+(n-1)F_2)^2\leq 2(F_1^2+(n-1)F_2^2)\leq 2(F_1+(n-1)F_2)^2,
\end{align*}
by $(\ref{inequn_Q_P_Q1})$, and hence,
\begin{align}\label{equn_Pn-1Qt2}
\partial_t(F_1+(n-1)F_2)=\Delta_g(F_1+(n-1)F_2)+2((1-n)(2\xi(t)^{-1}-1)+f)(F_1+(n-1)F_2)
\end{align}
on $\overline{M}\times[0,\infty)$, with
\begin{align*}
f= \frac{(F_1^2+(n-1)F_2^2)}{(F_1+(n-1)F_2)}<0,
\end{align*}
continuous on $\overline{M}\times(0,\infty)$. By the maximum principle for $(\ref{equn_Pn-1Qt2})$, since $2\xi(t)^{-1}-1>-1$, as the argument in the first paragraph in the proof of Lemma \ref{lem_P_Q-0}, once $F_1+(n-1)F_2$ has a uniform lower bound on $\{r_1\}\times[0,\infty)$, then $F_1+(n-1)F_2$ is uniformly bounded from below on $[0,r_1]\times[0,\infty)$.

Assume, for contradiction, that there exists a sequence $t_j\nearrow+\infty$ as $j\to\infty$, such that $(F_1+(n-1)F_2)(r_1,t_j)\to -\infty$. Then by $(\ref{inequn_pin1})$,
\begin{align}
F_1(r_1,t_j)\to-\infty,\,\,\,F_2(r_1,t_j)\to-\infty
\end{align}
as $j\to\infty$. Since $F_1-(n-1)F_2$ is uniformly bounded on $[0,r_1]\times[0,\infty)$, we obtain
\begin{align}\label{inequn_P-Q2blup}
F_1(r_1,t_j)-F_2(r_1,t_j)\to-\infty
\end{align}
as $j\to\infty$. Let $\delta=\frac{1}{2}\min\{r_1, r_0-r_1\}$. On $[r_1-\delta, r_1+\delta]\times(0,\infty)$, we rewrite $(\ref{equn_P-Qevol})$ as
\begin{align*}
\partial_t(F_1-F_2)=&a^{-2}\partial_r^2(F_1-F_2)+[-a^{-3}\partial_ra\,+(n-1)a^{-1}b^{-1}b_s']\,\partial_r(F_1-F_2)\\
&+2[(n-1)(1-2\xi(t)^{-1})-nb^{-2}-\frac{2}{n-2}(F_1-(n-1)F_2)](F_1-F_2).
\end{align*}
Now we define the continuous functions
\begin{align*}
&f_1(r,t)=-a^{-3}\partial_ra\,+(n-1)a^{-1}b^{-1}b_s',\\
&f_2(r,t)=2[(n-1)(1-2\xi(t)^{-1})-nb^{-2}-\frac{2}{n-2}(F_1-(n-1)F_2)],
\end{align*}
on $[r_1-\delta, r_1+\delta]\times(0,\infty)$. Therefore,
\begin{align}\label{equn_P-Qt2}
\partial_t(F_1-F_2)=a^{-2}\partial_r^2(F_1-F_2)+f_1(r,t)\,\partial_r(F_1-F_2)+f_2(r,t)(F_1-F_2).
\end{align}

{\bf Claim:} there exists $C=C(r_1)>0$ such that
\begin{align}
|\partial_ra|\leq C
\end{align}
on $[r_1-\delta, r_1+\delta]\times(0,\infty)$.

Once the claim holds, since $F_1-(n-1)F_2$ and $b_s'$ are uniformly bounded on $[r_1-\delta, r_1+\delta]\times(0,\infty)$, by Lemma \ref{lem_absBD}, there exists $C_2>0$ such that
\begin{align*}
|f_1(r,t)|+|f_2(r,t)|\leq C_2
\end{align*}
on $[r_1-\delta, r_1+\delta]\times(0,\infty)$. Hence, it follows from the parabolic Harnack inequality for $(\ref{equn_P-Qt2})$ that there exists $C>0$ independent of $j\in \mathbb{N}$ such that
\begin{align}
(F_2-F_1)(r,t)\geq C(F_2-F_1)(r_1,t_j)>0
\end{align}
for $(r,t)\in U_{j,\frac{\delta}{2}}=\{(r,t):\,|r-r_1|\leq \frac{\delta}{2},\,\,t_j-\frac{\delta^2}{4}\leq t\leq t_j\}$ and any $t_j\geq \delta^2$. We choose a subsequence $\{t_{j_k}\}_{k=1}^\infty\subseteq \{t_j\}_{j=1}^\infty$ such that $t_{j_1}>\delta^2$ and $t_{j_{k+1}}>t_{j_k}+\delta^2$, for $k\geq1$. Since $F_2<0$ for $t>0$, we get
\begin{align}
-F_1(r,t)\geq (F_2-F_1)(r_1,t_{j_k})>0
\end{align}
for $(r,t)\in U_{j_k,\frac{\delta}{2}}$ and any $k\in \mathbb{N}$, and hence by $(\ref{inequn_P-Q2blup})$,
\begin{align*}
a(r_1,t_{j_m})=a(r_1,0)(m^{-1}\xi(t))^{\frac{1}{2}}e^{\int_0^{t_{j_m}}-F_1(r_1,\tau)d\tau}&\geq a(r_1,0)(m^{-1}\xi(t))^{\frac{1}{2}}e^{\sum_{k=1}^m\int_{t_{j_k}-\frac{\delta^2}{4}}^{t_{j_k}}(F_2-F_1)(r_1,t_{j_k})d\tau}\\
&=a(r_1,0)(m^{-1}\xi(t))^{\frac{1}{2}}e^{\frac{\delta^2}{4}\sum_{k=1}^m(F_2-F_1)(r_1,t_{j_k})}\to\infty,
\end{align*}
as $m\to\infty$, contradicting with Lemma \ref{lem_absBD}, which proves the uniform boundedness of $F_1+(n-1)F_2$, and hence $F_1$ and $F_2$, on $[0,r_1]\times[0,\infty)$.

Now, it remains to prove the claim.

It follows by direct calculation that
\begin{align*}
&\partial_t(a_r')=\partial_r(\partial_ta)=-\partial_r((F_1+(n-1)(1-\xi(t)^{-1}))a)=-(F_1+(n-1)(1-\xi(t)^{-1}))\partial_ra-a\partial_rF_1,\\
&\partial_t(b_r')=\partial_r(\partial_tb)=-\partial_r((F_2+(n-1)(1-\xi(t)^{-1}))b)=-(F_2+(n-1)(1-\xi(t)^{-1}))\partial_rb-b\partial_rF_2,
\end{align*}
and hence,
\begin{align*}
\partial_t(a^{-1}a_r'-(n-1)b^{-1}b_r')=\partial_r((n-1)F_2-F_1)&=a\partial_s((n-1)F_2-F_1)\\
&=2(n-1)ab^{-1}b_s'\,(F_1-F_2)
\end{align*}
where we have used $(\ref{equn_NRFa})$, $(\ref{equn_NRFb})$, $(\ref{equn_Pn-1Qs})$ and the definition of $F_1$ and $F_2$. Integrating both sides of this equation with respect to $t$, we get
\begin{align}\label{equn_arbr}
\,\,\,&a(r,t)^{-1}a_r'(r,t)-(n-1)b(r,t)^{-1}b_r'(r,t)\\
=\,\,\,&a(r,0)^{-1}a_r'(r,0)-(n-1)b(r,0)^{-1}b_r'(r,0)\nonumber\\
+2&(n-1)\int_0^ta(r,\tau)b(r,\tau)^{-1}b_s'(r,\tau)F_1(r,\tau)d\tau-2(n-1)\int_0^ta(r,\tau)b(r,\tau)^{-1}b_s'(r,\tau)F_2(r,\tau)d\tau,\nonumber
\end{align}
on $[r_1-\delta, r_1+\delta]\times(0,\infty)$. On the other hand, $a,\,a^{-1},\,b,\,b^{-1},\,b_s'$, and hence $b_r'=ab_s'$, are uniformly bounded on $[r_1-\delta, r_1+\delta]\times(0,\infty)$, and moreover, we have $F_1<F_2<0$ on $[r_1-\delta, r_1+\delta]\times(0,\infty)$, and the integrals
\begin{align*}
&\int_0^t-F_1(r,\tau)d\tau=\ln(\frac{a(r,t)}{a(r,0)(m^{-1}\xi(t))^{\frac{1}{2}}}),\\
&\int_0^t-F_2(r,\tau)d\tau=\ln(\frac{b(r,t)}{b(r,0)(m^{-1}\xi(t))^{\frac{1}{2}}})
\end{align*}
are uniformly bounded on $[r_1-\delta, r_1+\delta]\times(0,\infty)$. Therefore, by $(\ref{equn_arbr})$, there exists a constant $C_3=C_3(r_1)>0$ such that
\begin{align}
|a_r'(r,t)|\leq C_3
\end{align}
on $[r_1-\delta, r_1+\delta]\times(0,\infty)$. This proves the {\bf Claim}. Therefore, $(\ref{inequn_Qlb2})$ holds. By Shi's interior estimates in \cite{Shi} on the solution to the Ricci flow, for each $j\geq0$ and $r_1\in(0,r_0)$, there exists $C=C(n,j,r_1)>0$ such that
\begin{align*}
|\nabla^jRm_{g(t)}|_{g(t)}\leq C
\end{align*}
on $\overline{B_{r_1}}(0)\times[0,\infty)$. This completes the proof of the theorem.

\end{proof}

\begin{thm}
Let $n\geq3$ and $k\geq2$. Assume that $\eta=\eta(t)\in C^k([0,\infty))$ satisfies the condition in Theorem \ref{thm_main}. Assume the compatibility conditions $(\ref{equn_comptmk})$ holds. Then $P$ and $Q$ converge to zero uniformly in $r$ on $[0,r_1]$ as $t\to\infty$, for any $r_1\in(0,r_0)$. Therefore, $g(t)$ converges locally uniformly in the interior of $\overline{M}$ to a rotationally symmetric locally hyperbolic metric $g_{\infty}$, as $t\to\infty$.
\end{thm}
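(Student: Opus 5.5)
Since $P=F_1+(n-1)(1-\xi(t)^{-1})$ and $Q=F_2+(n-1)(1-\xi(t)^{-1})$ with $\xi(t)\to1$ exponentially, it suffices to show that $F_1,F_2\to0$ uniformly on $[0,r_1]$. The plan is to combine three ingredients: the monotone integral identities $(\ref{equn_aintt0})$--$(\ref{equn_bintt0})$, the uniform bounds of Lemma \ref{lem_absBD}, and the derivative bounds of Theorem \ref{thm_QPn-1Qunibd2}.

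\textbf{Step 1 (integrability).} By Lemma \ref{lem_absBD}, the quantities $\int_0^t -F_1(r,\tau)\,d\tau$ and $\int_0^t -F_2(r,\tau)\,d\tau$ equal $\ln\bigl(a(r,t)(m\xi(t)^{-1})^{\frac12}/a(r,0)\bigr)$ and the corresponding expression for $b$. They are therefore nondecreasing in $t$ and bounded on $[0,r_1]\times[0,\infty)$ for each fixed $r_1<r_0$. Since $F_1,F_2<0$, this gives $\int_0^\infty |F_i(r,\tau)|\,d\tau\le C(r_1)$ uniformly in $r\in[0,r_1]$.

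\textbf{Step 2 (uniform decay).} Theorem \ref{thm_QPn-1Qunibd2} bounds $|\nabla^j Rm|$ on $\overline{B_{r_1'}}(0)$ for a slightly larger $r_1'$. Combined with the uniform two-sided bounds on $a$, $b$ and $b_s'$, this shows that $\partial_t F_i$ is uniformly bounded there; indeed $\partial_t F_i$ is given by $(\ref{equn_envP})$--$(\ref{equn_envQ})$ in terms of $\Delta_g F_i$ and bounded quantities. Away from $r=0$ one checks this using $b^{-2}\le C$. Near the origin one uses instead that $(F_1-F_2)b^{-2}$ is controlled by second derivatives of curvature, since $F_1-F_2=O(s^2)$ with constants governed by $|\nabla^2 Rm|$; alternatively, one can pass to the Ricci flow bound $|\partial_t Rm|\le C$ from Shi's estimates directly. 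Also $\partial_r F_i$ is uniformly bounded, because $|\partial_s F_i|\le C|\nabla Rm|$ and $a$ is bounded. Thus $F_i$ is uniformly Lipschitz in $(r,t)$ on $[0,r_1]\times[1,\infty)$. Now suppose $|F_i(r_j,t_j)|\ge\epsilon$ with $t_j\to\infty$. The Lipschitz bound in $t$ gives $|F_i(r_j,t)|\ge\epsilon/2$ on $[t_j,t_j+c\epsilon]$. Passing to a subsequence with disjoint intervals and $r_j\to\bar r$, and using the Lipschitz bound in $r$, we get $|F_i(\bar r,t)|\ge\epsilon/4$ on infinitely many disjoint intervals of length $c\epsilon$. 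This contradicts Step 1 at $r=\bar r$. Hence $F_1,F_2\to0$, and so $P,Q\to0$, uniformly on $[0,r_1]$.

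\textbf{Step 3 (convergence of the metric).} By $(\ref{equn_aintt0})$--$(\ref{equn_bintt0})$, $a$ and $b$ are monotone in $t$ after multiplication by $(m\xi^{-1})^{1/2}$ and are bounded, so they converge pointwise. The integrals in Step 1 converge uniformly by Dini's theorem, using continuity of the limit, or more simply from the Cauchy tail estimate $\int_t^\infty|F_i|\to0$, which holds uniformly by the same compactness argument. Hence $a(\cdot,t)\to a_\infty$ and $b(\cdot,t)\to b_\infty$ uniformly on $[0,r_1]$, with $a_\infty,b_\infty$ bounded below by their initial values. The higher derivative bounds from Theorem \ref{thm_QPn-1Qunibd2}, via Arzel\`a--Ascoli, upgrade this to $C^\infty_{loc}$ convergence of $g(t)$ to $g_\infty=a_\infty^2dr^2+b_\infty^2g_{\mathbb{S}^{n-1}}$. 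The limit is rotationally symmetric, and $K=L=-1$ because $F_1,F_2\to0$ and $\xi\to1$ (recall $F=(n-1)K$ and $G=K+(n-2)L$). So $g_\infty$ is locally hyperbolic.

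The main obstacle is the uniform time-Lipschitz bound for $F_i$ near the origin in Step 2, where the terms $b^{-2}(F_1-F_2)$ are singular-looking. I would handle it through Shi's estimates applied to the full curvature tensor, which bound $\partial_t Rm$ directly, rather than working with the reduced PDE.
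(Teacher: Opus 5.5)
Your proof is correct, and its core (Step 2) takes a different route from the paper's.

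\textbf{The paper's route.} The paper argues at a fixed radius. It assumes $(F_1+(n-1)F_2)(r_1,t_j)<-\epsilon$ along $t_j\to\infty$ and rewrites the evolution of $F_1+(n-1)F_2$ in the $r$-variable as $(\ref{equn_F1n-1F2t3})$ on $[r_1-\delta,r_1+\delta]$. There the coefficients are bounded thanks to the $|\partial_r a|$ estimate from the Claim in the proof of Theorem \ref{thm_QPn-1Qunibd2}. It then applies the parabolic Harnack inequality to $-(F_1+(n-1)F_2)\geq0$ and uses the pinching to get $F_1\leq -C\epsilon/n$ on parabolic cylinders of fixed size near the $t_j$. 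Summing over disjoint cylinders forces $a(r_1,t)\to\infty$, contradicting Lemma \ref{lem_absBD}. The decay is then spread to $\overline{B_{r_1}}(0)$ by Harnack and the $C^k$ bounds.

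\textbf{Your route.} You use the same contradiction source: Lemma \ref{lem_absBD} together with $F_i<0$ gives $\int_0^\infty|F_i|\,d\tau\leq C$ uniformly on $[0,r_1]$. But you get persistence over time intervals of fixed length from a Barbalat-type argument, using uniform Lipschitz bounds in $(r,t)$ supplied by Shi's estimates, instead of from Harnack. This buys a more elementary argument:
\begin{itemize}
\item no coefficient bound for $\partial_r a$ is needed;
\item $F_1$ and $F_2$ are handled directly;
\item uniformity on $[0,r_1]$, including the origin, comes from compactness;
\item you avoid the time-orientation issue of the Harnack cylinder. A lower bound propagates forward in time, so the cylinder should lie after $t_j$, not in $t_j-\frac{\delta^2}{4}\leq t\leq t_j$ as written in the paper.
\end{itemize}
The cost is using second-derivative curvature bounds to control $\partial_tF_i$, in particular $b^{-2}(F_1-F_2)$ near $r=0$. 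Theorem \ref{thm_QPn-1Qunibd2} provides these anyway.

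\textbf{A gap in Step 3.} One justification there does not work as written. The compactness argument does not yield uniform tails $\int_t^\infty|F_i|\,d\tau\to0$. The Lipschitz bound in $r$ compares $F_i(r_j,\tau)$ with $F_i(\bar r,\tau)$ only pointwise in $\tau$, not over an infinite time range.

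By Dini, uniform convergence of the increasing families $a(m\xi^{-1})^{\frac12}$ and $b(m\xi^{-1})^{\frac12}$ is equivalent to continuity of their limits. So what you need is equicontinuity in the fixed coordinate $r$, which bounds on $\nabla^jRm$ alone do not give. For $b$, use $b_r'=a\,b_s'\leq C$. For $a$, use the bound on $|\partial_r a|$ obtained from $(\ref{equn_arbr})$ and $\int_0^\infty|F_i|\,d\tau\leq C$, as in the Claim in the proof of Theorem \ref{thm_QPn-1Qunibd2}; this works on compact subsets of $(0,r_0)$. The paper itself only asserts this final convergence from monotonicity and Lemma \ref{lem_absBD}, so this is not a shortfall relative to it.
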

\begin{proof}
By the proof of Theorem \ref{thm_QPn-1Qunibd2}, for each $r_1\in(0,r_0)$, $F_1, \,F_2$, $b_s'$, and hence the function $f$ in $(\ref{equn_Pn-1Qt2})$, are uniformly bounded on $[0,r_1]\times[0,\infty)$, and moreover, by the claim there, $|a_r(r,t)|$ is uniformly bounded on $[r_1-\delta,r_1+\delta]\times[0,\infty)$.

Now assume there exists a constant $\epsilon>0$ and an increasing sequence $\{t_j\}_{j=1}^\infty$ such that $t_j\to\infty$ and $(F_1+(n-1)F_2)(r_1,t_j)<-\epsilon$. We rewrite $(\ref{equn_Pn-1Qt2})$ as
\begin{align}\label{equn_F1n-1F2t3}
&\partial_t(F_1+(n-1)F_2)-a^{-2}\partial_r^2(F_1+(n-1)F_2)\\
=\,\,&f_3(r,t)\partial_r(F_1+(n-1)F_2)+f_4(r,t)(F_1+(n-1)F_2)\nonumber
\end{align}
on $[r_1-\delta,r_1+\delta]\times[0,\infty)$, where $f_3=-a^{-3}\partial_ra\,+(n-1)a^{-1}b^{-1}b_s'$ and $f_4=2((1-n)(2\xi(t)^{-1}-1)+f)$ are uniformly bounded on $[r_1-\delta,r_1+\delta]\times[0,\infty)$. Recall that $F_1+(n-1)F_2\leq0$ on $\overline{M}\times[0,\infty)$. Thus it follows from the parabolic Harnack inequality for $(\ref{equn_F1n-1F2t3})$ that there exists $C>0$ independent of $j\in \mathbb{N}$ such that
\begin{align}
(F_1+(n-1)F_2)(r,t)\leq C\,(F_1+(n-1)F_2)(r_1,t_j)\leq - C\epsilon,
\end{align}
and hence by $(\ref{inequn_pin1})$,
\begin{align}
F_1(r,t)\leq \frac{-C\,\epsilon}{n},
\end{align}
for $(r,t)\in U_{j,\frac{\delta}{2}}=\{(r,t):\,|r-r_1|\leq \frac{\delta}{2},\,\,t_j-\frac{\delta^2}{4}\leq t\leq t_j\}$ and any $t_j\geq \delta^2$. Now we choose a subsequence $\{t_{j_k}\}_{k=1}^\infty\subseteq \{t_j\}_{j=1}^\infty$ such that $t_{j_1}>\delta^2$ and $t_{j_{k+1}}>t_{j_k}+\delta^2$, for $k\geq1$. Therefore,
\begin{align*}
a(r_1,t_{j_p})&=a(r_1,0)(m^{-1}\xi(t))^{\frac{1}{2}}e^{-\int_0^{t_{j_p}}F_1(r_1,\tau)d\tau}\\
&\geq a(r_1,0)(m^{-1}\xi(t))^{\frac{1}{2}}e^{\sum_{k=1}^p\int_{t_{j_k}-\frac{\delta^2}{4}}^{t_{j_k}}\frac{C\,\epsilon}{n}d\tau}\\
&=a(r_1,0)(m^{-1}\xi(t))^{\frac{1}{2}}e^{\frac{\delta^2C\epsilon}{4n}\,p}\to\infty,
\end{align*}
as $p\to\infty$, contradicting with Lemma \ref{lem_absBD}. Therefore,
\begin{align*}
(F_1+(n-1)F_2)(r_1,t)\to 0
\end{align*}
as $t\to\infty$. Since by Theorem \ref{thm_QPn-1Qunibd2}, $F_1$ and $F_2$ are uniformly bounded in $C^k$-norm  on $\overline{B_{r_1+\delta}}(0)\times[0,\infty)$ for any $k\geq0$, by the Harnack inequality for $(\ref{equn_Pn-1Qt2})$,
\begin{align*}
(F_1+(n-1)F_2)\to 0
\end{align*}
uniformly on $\overline{B_{r_1}}(0)$, as $t\to\infty$. Again, by Shi's interior estimates on the Ricci flow,
\begin{align*}
(F_1+(n-1)F_2)\to 0
\end{align*}
uniformly in $C^k$-norm on $\overline{B_{r_1}}(0)$ for any $k\geq0$, as $t\to\infty$. Moreover, $(m\xi(t)^{-1})^{\frac{1}{2}}a(r,t)$ and $(m\xi(t)^{-1})^{\frac{1}{2}}b(r,t)$ are increasing in $t$, and hence $a(r,t)$ and $b(r,t)$ converge to $a_{\infty}(r)$ and $b_{\infty}(r)$ locally uniformly in $C^k$-norm for any $k\geq0$ on $B_{r_0}(0)$, by $(\ref{equn_aintt0})$, $(\ref{equn_bintt0})$ and Lemma \ref{lem_absBD}. This completes the proof of the theorem.

\end{proof}

\vskip0.2cm

\section{Completeness of the limit metric}\label{section_5}
Let $\overline{M}$ be of dimension $n\geq3$. In this section, we will prove Theorem \ref{thm_main}. To this end, it suffices to prove that the volume of $B_{r_0}(0)$ under the limiting metric $g_{\infty}$ is infinity, as $g_{\infty}=a_{\infty}(r)^2dr^2+b_{\infty}(r)^2g_{\mathbb{S}^{n-1}}$ is rotationally symmetric and locally hyperbolic. We remark that for $m>1$, in comparison to the discussion on the volume growth of $M$ with respect to $g(t)$ for the case when $m=1$ in \cite{Li4}, we have to deal with the asymptotic behavior of the solution to a second order linear ordinary differential inequality as $t\to\infty$, instead of that of a first order differential inequality.

\begin{proof}[Proof of Theorem \ref{thm_main}.]
Since $a(r,t)$ and $b(r,t)$ are increasing in $t$, it remains to prove that
\begin{align}
\text{Vol}_{g(t)}(B_{r_0}(0))=\omega_n\int_0^{r_0}a(r,t)b(r,t)^{n-1}dr\to\infty\,
\end{align}
as $t\to\infty$, where $\omega_n$ is the area of the unit sphere $\mathbb{S}^{n-1}$ in $\mathbb{R}^n$.

Let $R_g$ be the scalar curvature of $g$, and define
\begin{align*}
R_g^\circ=R_g+n(n-1)=(F_1+(n-1)F_2)+n(n-1)(1-\xi(t)^{-1}).
\end{align*}
By $(\ref{equn_NRF1})$,
\begin{align*}
\frac{d}{dt} \text{Vol}_{g(t)}(B_{r_0}(0))&=\int_{B_{r_0}(0)}-R_{g(t)}^\circ dV_{g(t)}\\
&=\int_{B_{r_0}(0)}-(F_1+(n-1)F_2)-n(n-1)(1-\xi(t)^{-1}) dV_{g(t)}.
\end{align*}
By $(\ref{equn_Pn-1Qt})$,
\begin{align*}
\partial_tR_{g(t)}^\circ=\Delta_gR_{g(t)}^\circ+2(1-n)R_{g(t)}^\circ+2(P^2+(n-1)Q^2)
\end{align*}
on $\overline{B_{r_0}}\times[0,\infty)$. By $(\ref{equn_Pn-1Qb})$, $(\ref{equn_nPn-1Qb})$ and $(\ref{equn_DtHb})$,
\begin{align*}
\frac{\partial}{\partial n_g}R_{g(t)}^\circ=-2\eta'(t)+2[\frac{F_1}{n-1}+(n-2)(b(r_0,t)^{-2}-\frac{\eta(t)^2}{(n-1)^2}+1)+1-\xi(t)^{-1}]\eta(t),
\end{align*}
on $\partial M\times[0,\infty)$, where $n_{g}$ is the outer normal vector field of $\partial M$ with respect to $g(t)$. Therefore,
\begin{align*}
&\frac{d}{dt}\int_{B_{r_0}(0)}R_{g(t)}^\circ dV_{g(t)}\\
=&\int_{B_{r_0}(0)}[-(R_g^\circ)^2+\Delta_gR_{g(t)}^\circ+2(1-n)R_{g(t)}^\circ+2(P^2+(n-1)Q^2)]dV_{g(t)}\\
=&\int_{B_{r_0}(0)}[-(P+(n-1)Q)^2+2(P^2+(n-1)Q^2)+2(1-n)R_{g(t)}^\circ]dV_g+\int_{\partial B_{r_0}(0)}\frac{\partial}{\partial n_g}R_{g(t)}^\circ dS_{g(t)}\\
=&\int_{B_{r_0}(0)}[P^2-2(n-1)PQ-(n-1)(n-3)Q^2+2(1-n)R_{g(t)}^\circ]dV_g\\
&+\int_{\partial B_{r_0}(0)}-2\eta'(t)+2[\frac{F_1}{n-1}+(n-2)(b(r_0,t)^{-2}-\frac{\eta(t)^2}{(n-1)^2}+1)+1-\xi(t)^{-1}]\eta(t) dS_{g(t)}
\end{align*}
When $m<1$, we have $(1-\xi(t)^{-1})<0$, and hence, by $(\ref{inequn_pin1})$ and the condition on $\eta(t)=H_{\bar{g}(t)}\big|_{\partial M}+\rho(t)$, we obtain
\begin{align}\label{equn_D2VolofM}
&\frac{d}{dt}\int_{M}R_{g(t)}^\circ dV_{g(t)}\\
=&\int_{M}(1-n)(n-3)F_2^2-2(n-1)F_1F_2+F_1^2dV_{g(t)}\nonumber\\
&-2(n-1)(n-2)(1-\xi(t)^{-1})\int_M[F_1+(n-1)F_2]dV_g\nonumber\\
&-n(n-2)(n-1)^2(1-\xi(t)^{-1})^2\text{Vol}_{g(t)}(M)+2(1-n)\int_{M}R_{g(t)}^\circ\,dV_g\nonumber\\
&+\int_{\partial M}-2\eta'(t)+2[\frac{F_1}{n-1}+(n-2)(b(r_0,t)^{-2}-\frac{\eta(t)^2}{(n-1)^2}+1)+1-\xi(t)^{-1}]\eta(t) dS_{g(t)}\nonumber\\
\leq&\,\,\,2(1-n)\int_{M}R_{g(t)}^\circ\,dV_g+f(t),\nonumber
\end{align}
for $t\geq0$, where
\begin{align*}
f(t)&=\int_{\partial M}(n-2)(b(r_0,t)^{-2}-\frac{\eta(t)^2}{(n-1)^2}+1)\eta(t)dS_{g(t)}\\
&=\int_{\partial M}(n-2)(m\xi(t)^{-1}b(r_0,0)^{-2}e^{2\int_0^tF_2(r_0,\tau)d\tau}-\frac{(H_{\bar{g}(t)}\big|_{\partial M}+\rho(t))^2}{(n-1)^2}+1)\eta(t)dS_{g(t)}\\
&\leq\, -\frac{(n-2)}{(n-1)^2}\int_{\partial M}(2\rho(t)\,H_{\bar{g}(t)}\big|_{\partial M}+\rho(t)^2)\eta(t)dS_{g(t)}.
\end{align*}
Since $\rho(t)=0$, $\rho(t)'\geq0$ for $t\geq0$ and $\rho'(t)>0$ for $t\in(0,\epsilon)$, and
\begin{align*}
b(r_0,t)=(m^{-1}\xi(t)^{-1})^{\frac{1}{2}}b(r_0,0)e^{-\int_0^tF_2(r_0,\tau)d\tau}
\end{align*}
is uniformly bounded from below for $t\geq0$, there exists $\lambda>0$ such that
\begin{align*}
f(t)\leq -\lambda
\end{align*}
for $t\geq \epsilon$ and hence, for $m<1$, we have
\begin{align*}
\frac{d}{dt}\int_{M}R_{g(t)}^\circ dV_{g(t)}\leq 2(1-n)\int_{M}R_{g(t)}^\circ\,dV_g-\lambda
\end{align*}
for $t\geq \epsilon$. Thus, by integration,
\begin{align*}
\int_{M}R_{g(t)}^\circ dV_{g(t)}\leq -\frac{\lambda}{2(n-1)}+e^{-2(n-1)t}[e^{2(n-1)\epsilon}\int_{M}R_{g(\epsilon)}^\circ dV_{g(\epsilon)}+\frac{\lambda}{2(n-1)}e^{2(n-1)\epsilon}],
\end{align*}
for $t\geq\epsilon$. Therefore, for $m\in(0,1)$,
\begin{align*}
\text{Vol}_{g(t)}(M)=\text{Vol}_{g(\epsilon)}(M)-\int_\epsilon^t\int_MR_{g(\tau)}^\circ dV_{g(\tau)}d\tau\to+\infty
\end{align*}
as $t\to\infty$.

Now we consider the case when $m>1$. Define the function $y(t)=\text{Vol}_{g(t)}(M)$, for $t\geq0$. Recall that $\xi(t)\to 1$ as $t\to\infty$. Hence, by the condition satisfied by $\rho(t)$, there exists $\beta>0$ such that
 \begin{align*}
 b(r_0,t)^{-2}-\frac{\eta(t)^2}{(n-1)^2}+1&=m\xi(t)^{-1}b(r_0,0)^{-2}e^{2\int_0^tF_2(r_0,\tau)d\tau}-\frac{(H_{\bar{g}(t)}\big|_{\partial M}+\rho(t))^2}{(n-1)^2}+1\\
 &\leq m\xi(t)^{-1}b(r_0,0)^{-2}-\frac{(H_{\bar{g}(t)}\big|_{\partial M}+\rho(t))^2}{(n-1)^2}+1\\
 &= 1-\xi(t)^{-1}-\frac{1}{(n-1)^2}(2\rho(t)H_{\bar{g}(t)}\big|_{\partial M}+\rho(t)^2)\\
 &\leq -\frac{1}{(n-1)^2}\rho(t)^2\leq -\beta,
 \end{align*}
 for $t$ sufficiently large. Also, by $(\ref{inequn_pin1})$,
 \begin{align*}
 &(1-n)(n-3)F_2^2-2(n-1)F_1F_2+F_1^2-2(n-1)(n-2)(1-\xi(t)^{-1})[F_1+(n-1)F_2]\\
 \leq&-\frac{1}{n^2}(F_1+(n-1)F_2)^2-2(n-1)(n-2)(1-\xi(t)^{-1})[F_1+(n-1)F_2]\\
 \leq&n^2(n-1)^2(n-2)^2(1-\xi(t)^{-1})^2.
 \end{align*}
 Recall that $\eta'(t)\geq0$ for $t\geq0$, and $b(r_0,t)$ is uniformly bounded from below for $t\in[0,\infty)$. Therefore, by $(\ref{inequn_pin1})$ and the first equality in $(\ref{equn_D2VolofM})$, there exists a small constant $\beta_1>0$ and a large constant $N>0$, such that
\begin{align*}
y''(t)\geq &[-n^2(n-1)^2(n-2)^2+n(n-2)(n-1)^2](1-\xi(t)^{-1})^2\text{Vol}_{g(t)}(M)-2(1-n)\int_{M}R_{g(t)}^\circ\,dV_g\\
&+\int_{\partial M}2\eta'(t)-2[\frac{F_1}{n-1}+(n-2)(b(r_0,t)^{-2}-\frac{\eta(t)^2}{(n-1)^2}+1)+1-\xi(t)^{-1}]\eta(t) dS_{g(t)}\\
=&-2(n-1)y'(t)-n(n-1)^2(n-2)(n^2-2n-1)(1-\xi(t)^{-1})^2y(t)\\
&+\int_{\partial M}2\eta'(t)-2[\frac{F_1}{n-1}+(n-2)(b(r_0,t)^{-2}-\frac{\eta(t)^2}{(n-1)^2}+1)+1-\xi(t)^{-1}]\eta(t) dS_{g(t)}\\
\geq&-2(n-1)y'(t)-n(n-1)^2(n-2)(n^2-2n-1)(1-\xi(t)^{-1})^2y(t)+\beta_1,
\end{align*}
for $t\geq N$. Therefore, for any small $\delta>0$, there exists $T=T(\delta)>N$ such that
\begin{align*}
y''(t)\geq -2(n-1)y'(t)-\delta y(t)+\beta_1
\end{align*}
 for $t\geq T$. Let $z(t)=y(t)-\frac{\beta_1}{\delta}$, for $t\geq T$. Hence,
 \begin{align}\label{inequn_D2z}
z''(t)+2(n-1)z'(t)+\delta z(t)\geq0,
\end{align}
 for $t\geq T$. We notice that the equation
  \begin{align*}
\lambda^2+2(n-1))\lambda+\delta =0,
\end{align*}
has two negative eigenvalues when $\delta>0$ is small:
\begin{align*}
\lambda_1=1-n+\sqrt{(n-1)^2-\delta},\,\,\,\,\lambda_2=1-n-\sqrt{(n-1)^2-\delta}.
\end{align*}
Let $z(t)=e^{\lambda_1t}\eta(t)$ for $t\geq T$, and hence by $(\ref{inequn_D2z})$,
\begin{align*}
\frac{d}{dt}(\eta(t)'+2(\lambda_1+n-1)\eta(t))=\eta(t)''+2(\lambda_1+n-1)\eta'(t)\geq0
\end{align*}
for $t\geq T$. We now integrate each side of the inequality with respect to $t$ and obtain
\begin{align*}
\eta(t)'+2(\lambda_1+n-1)\eta(t)\geq C_1
\end{align*}
for $t\geq T$, with some constant $C_1\in \mathbb{R}$. Hence,
\begin{align*}
\eta(t)\geq C_1+C_2e^{-2(n-1+\lambda_1)t}
\end{align*}
for $t\geq T$, with some constants $C_1,\,C_2\in \mathbb{R}$. Therefore,
\begin{align*}
z(t)\geq C_1e^{\lambda_1t}+C_2e^{\lambda_2t}
\end{align*}
and hence,
\begin{align*}
y(t)\geq C_1e^{\lambda_1t}+C_2e^{\lambda_2t}+\frac{\beta_1}{\delta}
\end{align*}
for $t\geq T$. By the arbitrary choice of small $\delta>0$, we have
\begin{align*}
\text{Vol}_{g(t)}(M)=y(t)\to+\infty
\end{align*}
as $t\to\infty$. This completes the proof of the theorem.

\end{proof}

\vskip0.2cm

\section{The special case of dimension two}\label{section_6}

For short-time existence of the solution $g(t)$ to $(\ref{equn_NRF1d2})-(\ref{equn_Ninc2d2})$, see the preliminaries. Let $T>0$ be the largest existence time of the solution $g(t)$ to the initial-boundary problem $(\ref{equn_NRF1d2})-(\ref{equn_Ninc2d2})$.

Notice that $F_1=0$ at $t=0$. By the maximum principle for $(\ref{equn_n2Pt})$, as discussed in the proof of Lemma \ref{lem_P_Q-0}, there exists $\epsilon_0\in(0,T)$ such that $F_1$ cannot attain its positive maximum or negative minimum in $B_{r_0}(0)\times(0,\epsilon_1]$ for any $\epsilon_1\in(0,\epsilon_0)$. 
We first show that
\begin{align}\label{inequn_P-0}
F_1(r,t)\leq0
\end{align}
on $\overline{B_{r_0}}(0)\times[0,T)$. After that, we will employ the argument in \cite{Li3} to establish the long-time existence of the solution $g(t)$ and the locally uniform convergence of $g(t)$ to the complete hyperbolic metric in $M$. Notice that the boundary estimates and the variational argument for the asymptotic behavior of the volume of $M$ as $t\to\infty$ for $n\geq3$ do not apply to the case $n=2$, because of the vanishing of the coefficient $(n-2)$ for certain terms in the related formulas.

\begin{thm}\label{thm_F1nonpositiveD2}
For $n=2$, assume the boundary data $\eta$ in $(\ref{equn_Nbdc1d2})$ is defined by
\begin{align*}
\eta(t)=k_{\bar{g}}\big|_{\partial M}+\rho(t)
\end{align*}
for $t\geq0$, with $k_{\bar{g}(t)}\big|_{\partial M}$ defined in $(\ref{equn_Hbd02D})$ and $\rho=\rho(t)\in C^k([0,\infty))$ satisfies the compatibility condition $(\ref{equn_comptmk})$, $\rho'(t)\geq 0$ for $t\geq0$ and $\rho'(t)>0$ for $t\in(0,t_1)$ with some $t_1>0$. Moreover, if $m>1$, we assume that 
$(\ref{inequn_rhodt2})$ holds for $t\geq0$, with $n=2$. Then for the solution $g(t)$ to the initial-boundary problem $(\ref{equn_NRF1d2})-(\ref{equn_Ninc2d2})$, $(\ref{inequn_P-0})$ holds on $\overline{B_{r_0}}(0)\times[0,T)$, and
\begin{align}
F_1<0
\end{align}
on $\overline{M}\times(0,T)$.
\end{thm}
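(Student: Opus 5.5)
The plan is to obtain \eqref{inequn_P-0} from a weighted maximum principle for \eqref{equn_n2Pt}, with the weight chosen to compensate for the boundary condition \eqref{equn_Dtkb}, and then to get strict negativity from the strong maximum principle and the Hopf lemma.

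\emph{Step 1: rewrite the boundary condition.} Since $\xi'=2(1-\xi)$ for $n=2$, differentiating \eqref{equn_Hbd02D} gives
\begin{align*}
\frac{d}{dt}k_{\bar{g}(t)}\big|_{\partial M}=(1-\xi(t)^{-1})\,k_{\bar{g}(t)}\big|_{\partial M}.
\end{align*}
Substituting $\eta=k_{\bar g}+\rho$ into \eqref{equn_Dtkb} then yields
\begin{align*}
\frac{\partial}{\partial n_g}F_1=\eta(t)F_1-D(t),\qquad D(t)=\rho'(t)-(1-\xi(t)^{-1})\rho(t)\quad\text{on }\partial M\times[0,T).
\end{align*}
We have $D\geq0$ for all $t\geq0$. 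When $m>1$ this is exactly \eqref{inequn_rhodt2} with $n=2$. When $m<1$ it holds because $1-\xi^{-1}<0$, $\rho\geq0$ and $\rho'\geq0$. Moreover $D(t)>0$ for $t\in(0,t_1)$ small, and $\eta>0$.

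\emph{Step 2: weighted maximum principle.} The Robin coefficient $\eta F_1$ has the wrong sign for the Hopf lemma. A positive boundary maximum of $F_1$ together with $\partial_{n_g}F_1\ge 0$ is consistent with $\eta F_1\geq D$, so there is no direct contradiction. I will therefore fix $T'<T$ and work on $\overline{M}\times[0,T']$, where $g(t)$ is smooth up to the boundary and all geometric quantities are bounded. On this set I introduce a rotationally symmetric weight $\psi=\psi(r)$, for instance $\psi=Ar^2$, which is smooth at the origin. Choosing $A$ large gives $\partial_{n_g}\psi=2Ar_0\,a(r_0,t)^{-1}>\sup_{[0,T']}\eta$. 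Set $w=e^{-\psi-Ct}F_1$. Then $w$ satisfies a linear parabolic equation
\begin{align*}
\partial_t w=\Delta_g w+2\langle\nabla\psi,\nabla w\rangle_g+c(r,t)\,w,
\end{align*}
where $c$ collects the terms $|\nabla\psi|^2+\Delta_g\psi$, $\partial_t\psi$ (which vanishes here), $2(F_1+1-2\xi^{-1})$ and $-C$. All of these are bounded on $\overline{M}\times[0,T']$, so for $C$ large we get $c<0$. Hence a positive maximum of $w$ on $\overline{M}\times[0,T']$ cannot occur at an interior point with $t>0$; since $w(\cdot,0)=0$, it must occur on $\partial M\times(0,T']$. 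At such a boundary point,
\begin{align*}
\partial_{n_g}w=e^{-\psi-Ct}\big((\eta-\partial_{n_g}\psi)F_1-D\big)<0,
\end{align*}
which contradicts maximality. Therefore $F_1\le 0$ on $\overline{M}\times[0,T']$, and since $T'<T$ was arbitrary, \eqref{inequn_P-0} holds on $\overline{M}\times[0,T)$.

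\emph{Step 3: strictness.} Because $F_1\le0$, the function $F_1$ satisfies a linear equation $\partial_tF_1=\Delta_gF_1+\tilde c\,F_1$ with bounded $\tilde c$. If $F_1(x_0,t_0)=0$ at some interior point with $t_0>0$, the strong maximum principle forces $F_1\equiv0$ on $\overline{M}\times[0,t_0]$. Then the boundary identity gives $D\equiv0$ on $[0,t_0]$, contradicting $D(t)>0$ for small $t>0$. If instead $F_1(r_0,t_0)=0$ while $F_1<0$ in the interior, the Hopf lemma gives $\partial_{n_g}F_1(r_0,t_0)>0$. But the boundary identity gives $\partial_{n_g}F_1=-D\le0$ there, a contradiction. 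Hence $F_1<0$ on $\overline{M}\times(0,T)$.

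The main obstacle is Step 2: the boundary term $\eta F_1$ in the Robin condition has the unfavourable sign, so the argument of Lemma \ref{lem_P_Q-0} does not transfer directly. The weight $e^{-\psi}$ is what absorbs this term, and the care needed is only in checking that the extra zeroth-order terms it generates stay bounded on each compact time interval $[0,T']$, so that they can be dominated by the factor $e^{-Ct}$.
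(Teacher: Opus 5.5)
Your proof is correct and follows essentially the paper's route. You use an exponential radial weight whose outer normal derivative dominates $\eta$, together with a factor $e^{-Ct}$ that makes the zeroth-order coefficient negative; this rules out a positive boundary maximum through the Robin identity $\partial_{n_g}F_1=\eta F_1-D$, and strictness then comes from the strong maximum principle and the Hopf lemma. Working on a fixed $[0,T']$ instead of the paper's local argument near a first failure time $t_0$ is only a cosmetic difference.

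One small correction concerns the case $m>1$. There $(\ref{inequn_rhodt2})$ gives only $D\ge0$, not $D(t)>0$ pointwise for small $t$; a $\rho$ satisfying all the hypotheses can have $D$ vanish at times accumulating at $0$. Step 3, however, only needs $D\not\equiv0$ on $[0,t_0]$. That holds, because $D\equiv0$ there together with $\rho(0)=0$ would force $\rho\equiv0$ by uniqueness for the linear ODE $\rho'=(1-\xi^{-1})\rho$, contradicting $\rho'>0$ on $(0,t_1)$.
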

\begin{proof}
The proof is a modification of the proof of Theorem 6.1 in \cite{Li4}. Assume the contrary, there exists $t_0\in[0,T)$, such that $(\ref{inequn_P-0})$ holds for $t\in [0,t_0]$ and for any $\delta>0$, there exists $t_1\in(t_0,t_0+\delta)$ such that
\begin{align*}
F_1(r_0,t_1)>0,
\end{align*}
where we have applied the maximum principle for $(\ref{equn_n2Pt})$, as in the proof of Lemma \ref{lem_P_Q-0}.

Now we define a rotationally symmetric function $\tau(x)\in C^2(\overline{M})$ such that $\tau(x)=\tau(r(x))\geq \frac{1}{2}$ for $x\in\overline{M}\subseteq \mathbb{R}^2$, and $\tau(x)=1-s(r_0,t_0)+s(r(x),t_0)$ in a small neighborhood of $\partial M$, where $r(x)=|x|$ is the distance from $x$ to the origin in the Euclidean space $\mathbb{R}^2$. Hence, $\tau(x)=1$ for $x\in \partial M$, and
\begin{align}\label{equn_taunb}
\frac{\partial\tau}{\partial n_{g(t_0)}}=1
\end{align}
on $\partial B_{r_0}(0)$, where $n_{g(t_0)}$ is the outer normal vector field of $\partial B_{r_0}(0)$ with respect to $g(t_0)$. Let $\theta=e^{-at}F_1e^{-N\tau(x)}$, with $a,\,N>0$ two large constants to be determined. Thus for any $\delta>0$, by $(\ref{equn_n2Pt})$, we obtain
\begin{align}\label{equn_thetat}
\theta_t'&=e^{-at}\Delta_gF_1\,e^{-N\tau(x)}+(-a+2F_1+2-4\xi(t)^{-1})\theta\\
&=\Delta_g\theta+2Ne^{-N\tau(x)}\nabla_g(e^{-at}F_1)\cdot\nabla_g\tau(x)-e^{-at}F_1\Delta_g(e^{-N\tau(x)})+(-a+2F_1+2-4\xi(t)^{-1})\theta\nonumber\\
&=\Delta_g\theta+2N\nabla_g\tau(x)\cdot \nabla_g\theta+[N^2 \big|\nabla_g\tau(x)\big|_g^2+N \Delta_g\tau(x)+-a+2F_1+2-4\xi(t)^{-1}]\theta\nonumber
\end{align}
on $\overline{M}\times[t_0,t_0+\delta]$. Also, by $(\ref{equn_Dtkb})$, we obtain
\begin{align}\label{equn_thetanb}
\frac{\partial}{\partial n_g}\theta&=(\eta\,-\,N\,\frac{\partial\tau(x)}{\partial n_g})\theta+e^{-at-N\tau(x)}[(1-\xi(t)^{-1})\eta(t)-\eta'(t)]\\
&=(\eta\,-\,N\,\frac{\partial\tau(x)}{\partial n_g})\theta+e^{-at-N\tau(x)}[(1-\xi(t)^{-1})\rho(t)-\rho'(t)]\nonumber
\end{align}
on $\partial M\times[t_0,t_0+\delta]$, where $g=g(t)$ and $n_g$ is the outer normal vector field of $\partial M$ with respect to $g(t)$. By $(\ref{equn_taunb})$ and continuity of $g(t)$, there exists $\delta>0$ small such that
\begin{align}
\frac{9}{10}\leq \frac{\partial\tau(x)}{\partial n_g} \leq \frac{11}{10}
\end{align}
on $\partial M\times[t_0,t_0+\delta]$, and hence, we now choose $N>0$ large so that
\begin{align}\label{inequn_taunb2}
\eta\,-\,N\,\frac{\partial\tau(x)}{\partial n_g}<0
\end{align}
on $\partial M\times[t_0,t_0+\delta]$. Next, we take $a>0$ large so that
\begin{align*}
N^2 \big|\nabla_g\tau(x)\big|_g^2+N \Delta_g\tau(x)+-a+2F_1+2-4\xi(t)^{-1}<0
\end{align*}
on $\overline{M}\times[t_0,t_0+\delta]$. By the assumption, $\theta(x,t_0)\leq0$ for $x\in \overline{M}$, and there exists $t_1\in (t_0,t_0+\delta)$ such that $\sup_{\overline{M}}\theta(\cdot,t_1)>0$. Therefore, by the maximum principle for $(\ref{equn_thetat})$, there exists $t_2\in(t_0,t_1]$ such that
\begin{align*}
\theta(r_0,t_2)=\sup_{\overline{M}\times[t_0,t_2]}\theta>0,
\end{align*}
and hence,
\begin{align*}
\frac{\partial}{\partial n_g}\theta(r_0,t_2)\geq0,
\end{align*}
contradicting with $(\ref{equn_thetanb})$, by $(\ref{inequn_taunb2})$ and $(\ref{inequn_rhodt2})$. Therefore, the inequality $(\ref{inequn_P-0})$ holds on $\overline{M}\times[0,T)$.

Now if $\rho'(t)>0$ for $t\in(0,t_1)$, by the strong maximum principle for $(\ref{equn_n2Pt})$, $F_1<0$ on $M\times(0,T)$. Assume $F_1(r_0,t_1)=0$ for some $t_1\in(0,T)$. Thus, by the Hopf Lemma(see Lemma 2.8 in Page 12 in \cite{Liebm}), one gets
\begin{align*}
\frac{\partial}{\partial n_g}F_1(r_0,t_1)>0,
\end{align*}
contradicting with $(\ref{equn_Dtkb})$ and $(\ref{inequn_rhodt2})$. This completes the proof of the theorem.



\end{proof}

We next show the long-time existence of the solution $u$ to the initial-boundary value problem $(\ref{equn_ut})-(\ref{equn_uint})$. As in \cite{Li4}, recall the comparison theorem derived in \cite{Li3}.
\begin{thm}(Theorem 3.1 and Remark 3.1 in \cite{Li3}) Let $u_1$ be a subsolution to $(\ref{equn_ut})$ satisfying
\begin{align*}
&u_t=e^{-2u}(\Delta_{g(0)}u-K_{g(0)})-1,\,\,\text{in}\,\,\overline{M}\times[0,T),\\
&\frac{\partial u}{\partial n_{g(0)}}+k_{g(0)}\leq\eta_1 e^u,\,\,\text{on}\,\,\partial M\times[0,T),\\
&u\big|_{t=0}\leq u_{01},\,\,\,\text{in}\,\,\overline{M},
\end{align*}
and $u_2$ be a supersolution to $(\ref{equn_ut})$ satisfying
\begin{align*}
&u_t\geq e^{-2u}(\Delta_{g(0)}u-K_{g(0)})-1,\,\,\text{in}\,\,\overline{M}\times[0,T),\\
&\frac{\partial u}{\partial n_{g(0)}}+k_{g(0)}\geq\eta_1 e^u,\,\,\text{on}\,\,\partial M\times[0,T),\\
&u\big|_{t=0}\geq u_{01},\,\,\,\text{in}\,\,\overline{M},
\end{align*}
with $u_{01}\leq u_{02}$ in $\overline{M}$ and $\eta_1\leq \eta_2$ on $\partial M\times[0,T)$. Then we have
\begin{align}
u_1\leq u_2
\end{align}
for $(x,t)\in \overline{M}\times[0,T)$.
\end{thm}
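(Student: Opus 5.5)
We argue by a weighted maximum principle applied to the difference $w=u_1-u_2$, on each compact time interval $[0,T']$ with $T'<T$. We read the hypotheses in the natural way: $u_1$ satisfies $u_t\leq e^{-2u}(\Delta_{g(0)}u-K_{g(0)})-1$ with $u_1|_{t=0}\leq u_{01}$, and $u_2$ satisfies the boundary inequality with $\eta_2$ and $u_2|_{t=0}\geq u_{02}$. We also assume $u_1,u_2\in C^{2,1}(\overline{M}\times[0,T'])$, so that $u_i$, $\nabla u_i$, $\Delta_{g(0)}u_i$ and $\eta_1$ are bounded there by a constant $C(T')$. It suffices to prove $w\leq0$ on $\overline{M}\times[0,T']$ for every $T'<T$.

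\textbf{Step 1: linearize the interior inequality.} Subtracting the two differential inequalities gives
\begin{align*}
w_t\leq e^{-2u_1}\Delta_{g(0)}w+\big(e^{-2u_1}-e^{-2u_2}\big)\big(\Delta_{g(0)}u_2-K_{g(0)}\big).
\end{align*}
By the mean value theorem, $e^{-2u_1}-e^{-2u_2}=-2e^{-2\zeta}w$ for some $\zeta$ between $u_1$ and $u_2$. Hence
\begin{align*}
w_t\leq e^{-2u_1}\Delta_{g(0)}w+c(x,t)\,w,
\end{align*}
where $c$ is bounded on $\overline{M}\times[0,T']$.

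\textbf{Step 2: linearize the boundary inequality.} On $\partial M$ we have
\begin{align*}
\frac{\partial w}{\partial n_{g(0)}}\leq \eta_1e^{u_1}-\eta_2e^{u_2}=\eta_1\big(e^{u_1}-e^{u_2}\big)+(\eta_1-\eta_2)e^{u_2}\leq \eta_1e^{\zeta'}w,
\end{align*}
using $\eta_1\leq\eta_2$ and the mean value theorem again. The Robin coefficient $\beta=\eta_1e^{\zeta'}$ is bounded but may be positive, so a boundary maximum is not yet excluded.

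\textbf{Step 3: the weighted function.} As in the proof of Theorem \ref{thm_F1nonpositiveD2}, we take a function $\tau\in C^2(\overline{M})$ with $\tau\geq\frac12$, $\tau=1$ on $\partial M$, and $\partial\tau/\partial n_{g(0)}=1$ on $\partial M$. We set $\theta=e^{-At-N\tau}w$. A direct computation gives
\begin{align*}
\theta_t\leq e^{-2u_1}\Delta_{g(0)}\theta+b\cdot\nabla\theta+\big(c-A+O(N^2)\big)\theta
\end{align*}
for some bounded vector field $b$, together with the boundary inequality $\partial_n\theta\leq(\beta-N)\theta$. We first choose $N>\sup\beta$. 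Then we choose $A$ so large that the zeroth-order coefficient is negative. This order matters, because the $O(N^2)$ term depends on $N$ and also involves the bounded coefficient $e^{-2u_1}$ and $|\nabla\tau|$.

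\textbf{Step 4: exclude a positive maximum.} Suppose $\sup\theta>0$ on $\overline{M}\times[0,T']$. Since $\theta\leq0$ at $t=0$, the maximum is attained at some point with $t>0$.
\begin{itemize}
\item At an interior point we have $\theta_t\geq0$, $\nabla\theta=0$ and $\Delta\theta\leq0$. The differential inequality then forces $(c-A+O(N^2))\theta\geq0$, which contradicts $\theta>0$ and the negativity of the coefficient.
\item At a boundary point we have $\partial_n\theta\geq0$. But $\partial_n\theta\leq(\beta-N)\theta<0$, which is again a contradiction.
\end{itemize}
Hence $\theta\leq0$, so $w\leq0$ and $u_1\leq u_2$.

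The main obstacle is the positive Robin coefficient $\eta_1e^{\zeta'}$ in Step 2. It is handled by the $e^{-N\tau}$ weight, and this only works if $\eta_1$ and $u_i$ are bounded on $[0,T']$; that is why we restrict to $T'<T$. If only sub/supersolutions with weaker regularity are available, we would mollify, or argue at the first time $w$ becomes positive.
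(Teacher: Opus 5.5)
Your proof is correct. You also read the statement's typos as intended: the interior relation for $u_1$ should be $\leq$, and $u_2$ should carry $\eta_2$ and $u_{02}$.

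The paper does not prove this theorem itself; it imports it from \cite{Li3}. Your argument is a maximum principle for $\theta=e^{-At-N\tau}(u_1-u_2)$, with $N$ chosen above the Robin coefficient $\eta_1e^{\zeta'}$ and then $A$ chosen large. This is the same weighted device the paper uses in the proof of Theorem \ref{thm_F1nonpositiveD2}, where $\theta=e^{-at}F_1e^{-N\tau}$. It is slightly simpler here, because the normal is taken with respect to the fixed metric $g(0)$, so $\partial\tau/\partial n_{g(0)}=1$ holds exactly and no continuity argument is needed.

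One caveat: the boundedness of $c$ uses that $\Delta_{g(0)}u_2$ is bounded up to $t=0$. This holds for the $C^{2+\alpha,1+\frac{\alpha}{2}}$ solutions the theorem is applied to. Your fallback of arguing at the first positive time would not by itself handle coefficients that blow up as $t\to0$.
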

We restate the conclusion regarding the upper bound control of the solution from Theorem 1.3 in \cite{Li3} as the following theorem:
\begin{thm}\label{thm_upperbdun2}
Let $(\overline{M},g)$ be a compact surface with its interior $M$ and boundary $\partial M$, with $g$ a fixed Riemannian metric on $\overline{M}$. Consider the initial-boundary value problem
\begin{align}
&\label{equn_ut2}u_t=e^{-2u}(\Delta_gu-K_g)-1,\,\,\text{in}\,\,\overline{M}\times[0,\infty),\\
&\label{equn_unb2}\frac{\partial u}{\partial n_g}+k_{g}=\eta e^u,\,\,\text{on}\,\,\partial M\times[0,\infty),\\
&u\label{equn_uint2}\big|_{t=0}=u_0,\,\,\,\text{in}\,\,\overline{M},
\end{align}
We assume that $u_0\in C^{2,\alpha}(\overline{M})$ and $\eta\in C^{1+\alpha,\frac{1}{2}+\frac{\alpha}{2}}(\partial M\times [0,T_1])$ for all $T_1>0$, and also the compatibility condition holds on $\partial M\times\{0\}$:
\begin{align*}
\frac{\partial u_0}{\partial n_{g}}+k_{g}=\eta(\cdot,0)e^{u_0}.
\end{align*}
Moreover, assume that $(\ref{inequn_etaupperbd})$ holds on $\partial M\times[0,\infty)$, where $y(t)\in C^3([0,\infty))$ is some positive function satisfying $(\ref{inequn_ODEgrowth})$ for $t\in[0,\infty)$. Then for any $T_1>0$, there exists $C=C(T_1)>0$ such that the solution $u$ to $(\ref{equn_ut2})-(\ref{equn_uint2})$ satisfies
\begin{align}\label{inequn_upperbdun2}
u\leq C(T_1)
\end{align}
on $\overline{M}\times[0,T_1)$.
\end{thm}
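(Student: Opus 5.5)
We will construct an explicit supersolution to $(\ref{equn_ut2})$--$(\ref{equn_uint2})$ and then apply the comparison theorem (Theorem 3.1 and Remark 3.1 in \cite{Li3}) recalled above. Since $u_0$ is a fixed $C^{2,\alpha}$ function on the compact surface $\overline{M}$, the work is to find a barrier that can absorb an arbitrarily large boundary datum $\eta$, provided $\eta$ grows no faster than $y(t)^{1/3}-2$.

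\textbf{Choice of distance function.} Let $d(x)$ be a smoothed $g$-distance to $\partial M$. Near $\partial M$ it equals the distance, and in the interior it is a bounded smooth function with $d\geq \delta_0$ away from a collar. We then have $\partial d/\partial n_g=-1$ on $\partial M$, and $|\nabla d|_g$ and $|\Delta_g d|$ are bounded.

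\textbf{Ansatz for the barrier.} We look for
\begin{align*}
w(x,t)=\ln\Bigl(\frac{A\,\phi(t)}{1+\phi(t)\,d(x)}\Bigr)+B,
\end{align*}
with $\phi(t)$ built from $y(t)$, for instance $\phi=y^{1/3}$ or a power of $y$. The motivation is that the profile $\ln\bigl(\phi/(1+\phi d)\bigr)$ mimics the Loewner--Nirenberg blow-up profile $-\ln d$ while staying bounded at each finite time.

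\textbf{Boundary inequality.} On $\partial M$ we have $d=0$, so $e^{w}=A\phi e^{B}$ and
\begin{align*}
\frac{\partial w}{\partial n_g}=\frac{\phi}{1+\phi d}=\phi .
\end{align*}
The required inequality $\phi+k_g\geq \eta A\phi e^{B}$ is not immediate in this form, because both sides scale with $\phi$. We therefore normalize differently, taking
\begin{align*}
w=\ln\frac{\phi}{1+\phi d}-\ln\lambda ,
\end{align*}
so that $e^{w}=\phi/\lambda$ on $\partial M$. The boundary inequality becomes $\phi+k_g\geq \eta\phi/\lambda$. This holds if $\lambda\geq \eta+C$, which is time dependent.

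\textbf{Choice of $\phi$ and the interior inequality.} The cleaner route is to take $e^{w}$ on the boundary of order $1$ while $\partial_n w\sim \phi\geq \eta e^{w}$. So we choose $\phi(t)\geq C(\eta(t)+2)$, e.g. $\phi=y^{1/3}$, which by $(\ref{inequn_etaupperbd})$ dominates $\eta+2$. We then verify the interior inequality
\begin{align*}
w_t\geq e^{-2w}(\Delta_g w-K_g)-1 .
\end{align*}
Write $v=1+\phi d$. Then:
\begin{itemize}
\item $\Delta_g w\approx \phi^2|\nabla d|^2/v^2$ plus lower-order terms $O(\phi/v)$.
\item $e^{-2w}=v^2/\phi^2$ up to constants, so $e^{-2w}\Delta_g w$ is bounded by $C+C\,v$ for a constant $C$.
\item $w_t=\phi'/\phi-\phi' d/v=\phi'/(\phi v)$.
\end{itemize}
Thus we need $\phi'/(\phi v)+1\geq C+Cv+e^{-2w}|K_g|$. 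The term $e^{-2w}|K_g|\sim v^2/\phi^2$ is harmless near the boundary, but in the interior $v$ may be as large as $\phi$.

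\textbf{Main obstacle.} This balancing in the interior region, where $d\geq\delta_0$, is the hard step. There the growth of $\phi$ must be fast relative to $\phi$ itself, which is exactly what the condition $y'\geq 3y+1$ provides ($\phi=y^{1/3}$ gives $\phi'/\phi\geq 1$, with extra margin). If this is insufficient, I would first try adding a time-dependent constant $\beta(t)$ to $w$, with $\beta'$ large on $[0,T_1]$. This is allowed because only a bound depending on $T_1$ is required, so constants may depend on $\sup_{[0,T_1]}y$.

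\textbf{Conclusion.} Finally we take an additive constant so that $w(\cdot,0)\geq u_0$, and check the matching of $\eta_1=\eta\leq \eta_2$ required in the comparison theorem. The comparison theorem then gives $u\leq w\leq C(T_1)$ on $\overline{M}\times[0,T_1)$, because $\phi$ is bounded on $[0,T_1]$.
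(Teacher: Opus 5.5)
The paper gives no argument for this statement; it quotes it from Theorem~1.3 of \cite{Li3}. Your plan (an explicit smooth supersolution plus the comparison theorem) is a legitimate route. However, the barrier family you propose cannot be a supersolution on $[0,T_1]$ when $\eta$ is large, and this is exactly the step you leave open.

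\textbf{Where the barrier fails.} Your bullet estimates are those of $w=\ln\frac{\phi}{1+\phi d}-\ln\lambda$ with $\lambda$ a \emph{fixed} constant. But on $\partial M$ you have $\partial_n w=\phi$ and $e^{w}=\phi/\lambda$, so the boundary inequality forces $\lambda\geq\eta\phi/(\phi+k_g)\gtrsim\eta$. In your ``cleaner'' normalization ($e^{w}=O(1)$ on $\partial M$) the same factor reappears as $e^{-2w}\Delta_g w\approx\phi^2$ at $d=0$, while $w_t=-\phi' d/(1+\phi d)\leq 0$. Writing $\epsilon=1/\phi$, one has exactly
\[
e^{-2w}(\Delta_g w-K_g)=\lambda^2\bigl(|\nabla d|^2-(d+\epsilon)\Delta_g d-(d+\epsilon)^2K_g\bigr),\qquad w=-\ln(d+\epsilon)-\ln\lambda\leq-\ln(d\lambda).
\]
So on the whole collar, where $|\nabla d|=1$, the metric $e^{2w}g$ has curvature close to $-\lambda^2$. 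Fix a collar point with $d=\delta$ small, and assume, as with $\phi=y^{1/3}$, that $\phi$ is nondecreasing and large, so $\delta+\epsilon$ stays small. The interior inequality then forces $w(\delta,t)\geq\min u_0+\int_0^t(\lambda^2/2-1)\,d\tau$. The display gives $w(\delta,t)\leq-\ln\delta-\ln\lambda(t)$. Since $\lambda\gtrsim\eta$, these are incompatible once $\int_0^t\eta^2\,d\tau$ is large compared with $t+|\ln\delta|+\max|u_0|$.

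This obstruction does not involve $\phi'$, so the growth $\phi'/\phi\geq1$ you take from (\ref{inequn_ODEgrowth}) cannot help; it only matters at $d=0$. Your fallbacks also fail. Adding $\beta(t)$, or adding a constant to get $w(\cdot,0)\geq u_0$, keeps you in the same family $w=B(t)-\ln(1+\phi d)$. There the boundary inequality again forces $\phi e^{-B}\gtrsim\eta$, and the same cap $w\leq-\ln(\delta\,\phi e^{-B})$ holds.

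\textbf{The missing idea.} You need a boundary layer whose \emph{width} shrinks in time, so that a fixed interior point eventually leaves the strongly negatively curved region and the barrier can keep growing there. For example, let $\Psi\in C^\infty([0,\infty))$ be concave and nondecreasing, with $\Psi(0)=\Psi'(0)=1$ and $\Psi$ constant on $[s_0,\infty)$. Set $w=\beta(t)-\ln\Psi(\phi(t)d)$ with $\phi=e^{\beta}\Lambda$ and $\Lambda=y^{1/3}+A$.
\begin{itemize}
\item On $\partial M$ one has $w=\beta$ and $\partial_n w=\phi$. So $\partial_n w+k_g\geq\eta e^{w}$ holds once $\beta'\geq0$ and $Ae^{\beta(0)}\geq\max|k_g|$.
\item In $M$, $e^{-2w}(\Delta_g w-K_g)\leq C\Lambda^2$, with $C$ depending only on $g$, $\Psi$ and $\beta(0)$.
\item With $s=\phi d$, concavity gives $\Psi-s\Psi'\geq\Psi(0)=1$, hence
\[
w_t=\beta'\Bigl(1-\frac{s\Psi'}{\Psi}\Bigr)-\frac{s\Psi'}{\Psi}\,\frac{\Lambda'}{\Lambda}\geq\frac{\beta'}{\max\Psi}-\frac{\Lambda'}{\Lambda}.
\]
\end{itemize}
Now take $\beta'=\max\Psi\,(C\Lambda^2+\Lambda'/\Lambda)$ and $\beta(0)=\max u_0+\ln\max\Psi$. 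Take $A$ large enough that $\{\phi d\leq s_0\}$ stays inside the collar where $d$ is smooth. This gives a smooth supersolution with $w\leq\beta(T_1)$ on $[0,T_1]$. The comparison theorem then yields $u\leq\beta(T_1)$. Without some device of this kind, your argument does not establish the bound.
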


Now we show the long-time existence of the solution $g(t)$ to the initial-boundary value problem $(\ref{equn_NRF1d2})-(\ref{equn_Ninc2d2})$.
\begin{thm}\label{thm_existlongtimen2}
Assume $n=2$, $k\geq 2$. Let $\eta(t),\,\rho\in C^k([0,\infty))$ be defined as in Theorem \ref{thm_F1nonpositiveD2}. Moreover, $(\ref{inequn_etaupperbd})$ holds on $\partial M\times[0,\infty)$, where $y(t)\in C^3([0,\infty))$ is some positive function satisfying $(\ref{inequn_ODEgrowth})$ for $t\in[0,\infty)$.  Then there exists a unique solution $g(t)$ to $(\ref{equn_NRF1d2})-(\ref{equn_Ninc2d2})$ on $\overline{M}\times[0,\infty)$.
\end{thm}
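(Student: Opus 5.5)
We argue by contradiction with the maximal existence time. Let $T$ be the largest existence time of the solution $g(t)=e^{2u}g(0)$ to $(\ref{equn_NRF1d2})-(\ref{equn_Ninc2d2})$; by the variable transformation $(\ref{equn_vartransf})$ and Theorems 1.2 and 1.3 in \cite{Gi}, $T>0$. Assume $T<\infty$. The plan is to obtain two-sided $C^0$ bounds on $u$ on $\overline{M}\times[0,T)$, upgrade them to higher-order bounds by parabolic regularity, and then extend the solution past $T$.

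\emph{Lower bound.} By Theorem \ref{thm_F1nonpositiveD2}, $F_1\le 0$ on $\overline{M}\times[0,T)$. Hence, by $(\ref{equn_PQn2})$,
\begin{align*}
u(r,t)=-\int_0^tP\,d\tau=-\int_0^tF_1\,d\tau-\int_0^t(1-\xi(\tau)^{-1})\,d\tau\geq-\int_0^t(1-\xi(\tau)^{-1})\,d\tau=\frac12\ln(m^{-1}\xi(t)).
\end{align*}
This is bounded below on $[0,T)$. Equivalently, $e^{2u}m\,g_{-1}\ge \xi(t)g_{-1}$, so $u$ dominates the model solution $\bar{g}$. The same fact follows from the comparison theorem (Theorem 3.1 in \cite{Li3}), since $\eta\geq k_{\bar g}$ because $\rho\geq0$.

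\emph{Upper bound.} Theorem \ref{thm_upperbdun2} applies with $g=g(0)=mg_{-1}$, $u_0=0$, and boundary data $\eta$ satisfying $(\ref{inequn_etaupperbd})$ with $y$ satisfying $(\ref{inequn_ODEgrowth})$. The compatibility condition at $t=0$ holds because $\eta(0)=k_{mg_{-1}}$. Taking $T_1=T$ gives $u\le C(T)$ on $\overline{M}\times[0,T)$. We must check the regularity hypothesis $\eta\in C^{1+\alpha,\frac12+\frac\alpha2}$; it holds since $\eta\in C^k$ with $k\ge2$ depends only on $t$. Thus $|u|\le C(T)$, so $g(t)$ is uniformly equivalent to $g(0)$ on $[0,T)$.

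\emph{Higher regularity and continuation.} With $u$ bounded, $(\ref{equn_ut})$ is a uniformly parabolic quasilinear equation, since its diffusion coefficient $e^{-2u}$ is bounded above and below. The boundary condition $(\ref{equn_unb})$ is an oblique (Neumann-type) condition whose right-hand side $\eta e^u$ is bounded. We then apply standard estimates for quasilinear parabolic equations with nonlinear Neumann data, as used in \cite{Li3} (e.g. Lieberman \cite{Liebm}): first a Hölder bound on $u$ up to the boundary, then a $C^{1+\alpha,\frac{1+\alpha}2}$ bound, then Schauder $C^{2+\alpha,1+\frac\alpha2}$ bounds on $\overline{M}\times[\epsilon,T)$, with bounds on $[0,\epsilon]$ coming from short-time theory. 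These yield a uniform $C^{2+\alpha}$ bound on $u(\cdot,t)$ as $t\to T$. Therefore $u(\cdot,t)$ converges in $C^{2+\beta}(\overline{M})$ to a limit, the boundary compatibility condition holds at $t=T$, and the short-time existence result restarts the flow on $[T,T+\delta]$. This contradicts the maximality of $T$. Uniqueness follows from the comparison theorem, or from uniqueness in \cite{Gi}.

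The main obstacle is the first-derivative (gradient) estimate up to the boundary, which is needed to pass from $C^0$ to Hölder gradient bounds for the nonlinear Neumann problem. I would take it directly from the a priori estimates in \cite{Li3}, where this exact scalar problem is treated, rather than rederive it. Alternatively, rotational symmetry reduces the problem to one space dimension, where the Nash–Moser/De Giorgi step and the boundary gradient bound are elementary.
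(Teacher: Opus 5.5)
Your proposal is correct and follows essentially the same route as the paper. The paper also takes the lower bound on $u$ from $F_1\le 0$ (Theorem \ref{thm_F1nonpositiveD2}) and the upper bound on $[0,T)$ from Theorem \ref{thm_upperbdun2}, then invokes standard parabolic regularity to extend the solution past a finite maximal time $T$. Your explicit bound $u\geq\frac12\ln(m^{-1}\xi(t))$ and your bootstrap sketch (H\"older, then $C^{1+\alpha}$, then Schauder with the Neumann data) simply spell out what the paper compresses into ``standard regularity theory.''
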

\begin{proof}
By $(\ref{inequn_P-0})$, the solution
\begin{align*}
u(r,t)=-\int_0^tP(r,\tau)d\tau=-\int_0^tF_1(r,\tau)d\tau+\frac{1}{2}(\ln(\xi(t))-\ln(m))
 \end{align*}
 to the initial-boundary value problem $(\ref{equn_ut})-(\ref{equn_uint})$ are increasing in $t$ if $m\in(0,1)$, for $(r,t)\in[0,r_0]\times[0,T)$; while $u(r,t)\geq -\ln(m)$ for $m>1$, for $(r,t)\in[0,r_0]\times[0,T)$. Hence, $u(r,t)$ is uniformly bounded from below on $[0,r_0]\times[0,T)$ for any $T>0$.

 When $m\in(0,1)$, we have $u(r,t)\geq 0$ for $(r,t)\in[0,r_0]\times[0,T)$. That is to say, the function $v=u_t'(r,t)=-P(r,\tau)\geq0$, for $(r,t)\in[0,r_0]\times[0,T)$. Therefore, $u(\cdot,t)$ is a subsolution to
\begin{align}\label{equn_statlim}
\Delta_{g(0)}u-K_{g(0)}-e^{2u}=0
\end{align}
in $\overline{M}$ for $t\geq0$. Differentiating both sides of $(\ref{equn_ut})$ with respect to $t$ yields
\begin{align}\label{equn_vt}
v_t=e^{-2u}\Delta_{g(0)}v-2(v^2+v)
\end{align}
on $\overline{M}\times[0,T)$.

On the other hand, by Theorem \ref{thm_upperbdun2}, for any $T_1>0$, $(\ref{inequn_upperbdun2})$ holds on $\overline{M}\times[0,T_1)$.

In summary, if the largest existence time $T$ of the solution $g(t)$ satisfies $T<\infty$, then $u$ is uniformly bounded on $\overline{M}\times[0,T)$. Hence, by the standard regularity theory for parabolic equations, there exists a positive constant $C=C(T)>0$ such that
\begin{align*}
\|u\|_{C^{4+\alpha,2+\frac{\alpha}{2}}(M\times[0,T))}\leq C(T)
\end{align*}
for any $\alpha\in(0,1)$. Therefore, the solution $u$ can be extended to time $T$, and hence, the solution $g(t)$ exists for all $t\geq0$. This proves the theorem.

\end{proof}

Now we are ready to show the convergence of the solution $g(t)$ to $(\ref{equn_NRF1d2})-(\ref{equn_Ninc2d2})$ to a complete hyperbolic metric in $M$.

\begin{proof}[Proof of Theorem \ref{thm_convern2}.]
We first consider the case when $m\in(0,1)$. Since $u(\cdot,t)\in C^2(\overline{M})$ is a subsolution to $(\ref{equn_statlim})$, by the comparison theorem,
\begin{align*}
u(x,t)\leq u_{LN}(x)
\end{align*}
for $(x,t)\in M\times[0,\infty)$, where $u_{LN}$ is the solution to the Loewner-Nirenberg problem of $(\ref{equn_statlim})$ on $M$. Thus, by applying the standard regularity theory for parabolic equations to $(\ref{equn_ut})$, for each compact subset $F\subseteq M$, there exists a constant $C=C(F)>0$ such that
 \begin{align*}
 \|u\|_{C^{4,2}(F\times[0,\infty))}\leq C(F).
 \end{align*}
 Since $u(x,t)$ is increasing in $t$, $u(x,t)$ converges to $u_{\infty}(x)\leq u_{LN}(x)$ pointwisely in $M$ as $t\to\infty$. Therefore, by the Harnack inequality for $(\ref{equn_vt})$, $v=u_t\to0$ and $u(\cdot,t)\to u_{\infty}(\cdot)$ locally uniformly in $M$ as $t\to\infty$. Hence, by applying standard regularity theory to $(\ref{equn_ut})$, $u(x,t)\to u_{\infty}(x)$ locally uniformly in $C^{2,\alpha}$ in $M$. Therefore, $u_{\infty}$ satisfies $(\ref{equn_statlim})$ in $M$.

 For $m>1$, let $\overline{\Omega_1}$ be any smooth compact sub-domain in $M$, with its interior $\Omega_1$, and $U$ the solution to the Loewner-Nirenberg problem on $\Omega_1$. Let $\phi(t)=\sup_{\Omega_1}(u(\cdot,t)-U(\cdot))$ for $t\geq0$. By the argument in the interior upper bound estimates in Section 4 of \cite{Li3}, using Hamiton's trick on the ordinary differential inequality satisfied by $\phi(t)$ for $t\in[0,\infty)$, we have
 \begin{align*}
 \limsup_{t\to\infty}u(p,t)\leq U(p)
 \end{align*}
 for $p\in \Omega_1$. Therefore, $u(\cdot,t)$ is locally uniformly bounded from above for $t\in[0,\infty)$ in $M$. Since $F_1\leq0$ on $\overline{M}\times[0,\infty)$, we obtain
 \begin{align}\label{equn_u2D3}
-\int_0^tF_1(r,\tau)d\tau=u(r,t)-\frac{1}{2}(\ln(\xi(t))-\ln(m))
 \end{align}
is increasing in $t$, and hence, $|u|$ is uniformly bounded from below on any given compact subset $\overline{\Omega_1}\subseteq M$ for $t\geq0$, and also,
 \begin{align*}
u(r,t)\to u_{\infty}(r)
 \end{align*}
as $t\to\infty$, for $r\in[0,r_0)$. Hence, by standard interior estimates, we derive from $(\ref{equn_ut})$ that for any $m\in \mathbb{N}$, $\alpha\in(0,1)$ and any smooth compact sub-domain $\overline{\Omega_1}\subseteq M$, there exists $C=C(m,\alpha,\overline{\Omega_1})>0$ such that
\begin{align}\label{inequn_ubounds2D3}
\|u\|_{C^{2m+\alpha,m+\frac{\alpha}{2}}(\overline{\Omega_1}\times[T,T+1])}\leq C,
\end{align}
for any $T\geq0$. Let $f(r,t)=-\int_0^tF_1(r,\tau)d\tau$ and
\begin{align*}
\varphi(r,t)=f_t'(r,t)=-F_1(r,t)
 \end{align*}
 for $(r,t)\in[0,r_0]\times[0,\infty)$. By Theorem \ref{thm_F1nonpositiveD2}, $\varphi(r,t)\geq0$ for $(r,t)\in[0,r_0]\times[0,\infty)$. By $(\ref{equn_u2D3})$ and $(\ref{inequn_ubounds2D3})$, for any $m\in \mathbb{N}$, $\alpha\in(0,1)$ and any smooth compact sub-domain $\overline{\Omega_1}\subseteq M$, there exists $C_1=C_1(m,\alpha,\overline{\Omega_1})>0$ such that
\begin{align}\label{inequn_ubounds2D3-1}
&\|f\|_{C^{2m+\alpha,m+\frac{\alpha}{2}}(\overline{\Omega_1}\times[T,T+1])}\leq C_1,\\
&\|\varphi\|_{C^{2m-2+\alpha,m-1+\frac{\alpha}{2}}(\overline{\Omega_1}\times[T,T+1])}\leq C_1,
\end{align}
for any $T\geq0$. By $(\ref{equn_vt})$, we have
\begin{align}\label{equn_varphiDt}
\varphi_t'=e^{2u}\Delta_{g(0)}\varphi-2[\varphi+(2\xi(t)^{-1}-1)]\varphi
\end{align}
on $\overline{M}\times[0,\infty)$. Therefore, by the Harnack inequality for $(\ref{equn_varphiDt})$, $\varphi(\cdot,t)\to0$ and $u(\cdot,t)\to u_{\infty}(\cdot)$ locally uniformly in $M$ as $t\to\infty$. Hence, by the standard interior estimates for $(\ref{equn_ut})$, $u(x,t)\to u_{\infty}(x)$ locally uniformly in $C^{2,\alpha}$ in $M$ and hence, $u_{\infty}$ satisfies $(\ref{equn_statlim})$ in $M$.

It remains to show the completeness of the metric $g_{\infty}=e^{2u_{\infty}}g(0)$ on $M$. Since
\begin{align*}
\text{Vol}_{g(t)}(M)=2\pi\int_0^{r_0}a(r,0)b(r,0)e^{-2\int_0^tP(r,\tau)d\tau}dr=2\pi\,m^{-1}\xi(t)\int_0^{r_0}a(r,0)b(r,0)e^{-2\int_0^tF_1(r,\tau)d\tau}dr,
\end{align*}
with $F_1\leq0$ and $\xi(t)\to1$ as $t\to\infty$, it suffices to show that the volume of $M$ with respect to $g(t)$ goes to infinity, as $t\to\infty$.

By $(\ref{equn_NRFa})$ and $(\ref{equn_NRFb})$,
\begin{align*}
\frac{d}{dt}Vol_g(B_{r_0}(0))=\int_{B_{r_0}(0)}-2P dV_g,
\end{align*}
for $t\geq0$. Taking derivative with respect to $t$ again, for $m\neq1$, by $(\ref{equn_n2Pt})$ and $(\ref{equn_Dtkb})$ one has
\begin{align}
\frac{d}{dt}\int_{B_{r_0}(0)}-2P dV_g
&=\int_{B_{r_0}(0)}-2\Delta_gP+4P dV_g\\
&=-2\int_{\partial M}\frac{\partial P}{\partial n_g}dS_g+4\int_{B_{r_0}(0)}P dV_g\nonumber\\
&=2\int_{\partial M}\eta'(t)-\eta(t)(F_1+1-\xi(t)^{-1}) dS_g+4\int_{B_{r_0}(0)}P dV_g\nonumber\\
&\geq 2\int_{\partial M}\eta'(t)-\eta(t)(1-\xi(t)^{-1}) dS_g+4\int_{B_{r_0}(0)}P dV_g\nonumber\\
&=4\pi b(r_0,t)[\rho'(t)-\rho(t)(1-\xi(t)^{-1})]+4\int_{B_{r_0}(0)}P dV_g\nonumber\\
&\geq 4\pi b(r_0,0)(m^{-1}\xi(t))^{\frac{1}{2}}[\rho'(t)-\rho(t)(1-\xi(t)^{-1})]+4\int_{B_{r_0}(0)}P dV_g\nonumber,
\end{align}
where for the last inequality we have used $(\ref{inequn_rhoinfty2})$ when $m>1$, and hence, by $(\ref{inequn_rhoinfty})$ and $(\ref{inequn_rhoinfty2})$,
\begin{align*}
\int_{B_{r_0}(0)}-2P(\cdot,t)dV_g\geq e^{2t_0-2t}\int_{B_{r_0}(0)}-2P(\cdot,t_0)dV_{g(t_0)}+Ce^{-2t}\int_{t_0}^te^{2\tau}\epsilon (1+\tau)^{-1}(\ln(1+\tau))^{-1} d\tau,
\end{align*}
for some uniform constant $C>0$ and $t\geq t_0>T$. Therefore,
\begin{align*}
Vol_{g(t)}(B_{r_0}(0))&= Vol_{g(t_0)}(B_{r_0}(0))+\int_{t_0}^t\int_{B_{r_0}(0)}-2P(\cdot,\tau) dV_{g(\tau)}d\tau\\
&\geq C\epsilon\int_{t_0}^te^{-2\tau}\int_{t_0}^{\tau}e^{2y}(1+y)^{-1}(\ln(1+y))^{-1}dy d\tau-C_2\to\infty
\end{align*}
as $t\to\infty$, where $C_2>0$ is some constant. This completes the proof of the theorem.

\end{proof}

 \end{document}